\DeclareFontFamily{U}{BOONDOX-calo}{\skewchar\font=45 }
\DeclareFontShape{U}{BOONDOX-calo}{m}{n}{
  <-> s*[1.05] BOONDOX-r-calo}{}
\DeclareFontShape{U}{BOONDOX-calo}{b}{n}{
  <-> s*[1.05] BOONDOX-b-calo}{}
\DeclareMathAlphabet{\mathcalboondox}{U}{BOONDOX-calo}{m}{n}
\SetMathAlphabet{\mathcalboondox}{bold}{U}{BOONDOX-calo}{b}{n}
\DeclareMathAlphabet{\mathbcalboondox}{U}{BOONDOX-calo}{b}{n}
\definecolor{brightcerulean}{rgb}{0.11, 0.67, 0.84}
\definecolor{cerulean}{rgb}{0.0, 0.48, 0.65}
\definecolor{Gray}{rgb}{0.5, 0.5, 0.5}
\definecolor{columbiablue}{rgb}{0.61, 0.87, 1.0}
\definecolor{OliveGreen}{rgb}{0, 0.26, 0.15}
\definecolor{dark red}{rgb}{0.7, 0., 0.2}
\definecolor{dark red 2}{rgb}{0.81, 0.09, 0.13}
\definecolor{dark blue}{rgb}{0, 0.18, 0.39}
\definecolor{dark blue 2}{rgb}{0.03, 0.27, 0.49}
\definecolor{dark green2}{rgb}{0.07, 0.53, 0.03}
\definecolor{dark green}{rgb}{0, 0.44, 0}
\definecolor{lightcarminepink}{rgb}{0.9, 0.4, 0.38}
\definecolor{lightcoral}{rgb}{0.94, 0.5, 0.5}
\definecolor{lightcornflowerblue}{rgb}{0.6, 0.81, 0.93}
\definecolor{lightcyan}{rgb}{0.88, 1.0, 1.0}
\definecolor{lavenderblush}{rgb}{1.0, 0.94, 0.96}
\definecolor{deeppeach}{rgb}{1.0, 0.8, 0.64}
\definecolor{darkchampagne}{rgb}{0.76, 0.7, 0.5}
\definecolor{desertsand}{rgb}{0.93, 0.79, 0.69}
\definecolor{classicrose}{rgb}{0.98, 0.8, 0.91}
\definecolor{myyeallow}{rgb}{0.98, 0.91, 0.71}
\definecolor{carolinablue}{rgb}{.1, .7, 1}
\definecolor{antiquewhite}{rgb}{0.98, 0.92, 0.84}
\definecolor{mycolor}{rgb}{0.98, 0.91, 0.71}
\definecolor{darkerblue}{rgb}{0.45, 0.7, 1.0}
\definecolor{limes}{rgb}{0.2, .9, 0.2}
\definecolor{softgreen}{rgb}{0.8, 1, 0.8}
\definecolor{softblue}{rgb}{.9, .9, 1}
\definecolor{camel}{rgb}{0.76, 0.6, 0.42}
\definecolor{brightcerulean}{rgb}{0.11, 0.67, 0.84}
\definecolor{cerulean}{rgb}{0.0, 0.48, 0.65}
\definecolor{Gray}{rgb}{0.5, 0.5, 0.5}
\definecolor{blizzardblue}{rgb}{0.67, 0.9, 0.93}
\definecolor{cyan}{rgb}{0.0, 1.0, 1.0}
\definecolor{mauve}{rgb}{0.86, 0.82, 1.0}
\definecolor{paleblue}{rgb}{0.2, 0.85, 0.97}
\definecolor{bronze}{rgb}{0.8, 0.5, 0.2}
\definecolor{nogreen}{rgb}{.9, 0.1, 0.3}
\definecolor{newblue}{rgb}{.1,.4,1}
\numberwithin{equation}{section}
\newcommand{\nur}{\nu^N_{\rho(\cdot)}}
\newcommand{\ola}[1]{\overleftarrow{#1}}
\newcommand{\ora}[1]{\overrightarrow{#1}}
\newcommand{\Pb}{\mathbb{P}}
\newcommand{\E}{\mathbb{E}}
\newcommand{\R}{\mathbb{R}}
\newcommand{\N}{\mathbb{N}}
\newcommand{\Z}{\mathbb{Z}}
\newcommand{\Or}{{\cal O}}
\let\emptyset\varnothing
\newlength\squareheight
\newtheorem{prop}{Proposition}[section]
\newtheorem{thm}[prop]{Theorem}
\newtheorem{lem}[prop]{Lemma}
\newtheorem{defin}[prop]{Definition}
\newtheorem{cor}[prop]{Corollary}
\newtheorem{cla}[prop]{Claim}
\newtheorem{rem}[prop]{Remark}
\newenvironment{remark}{\begin{rem}\normalfont}{\end{rem}}
\newcommand{\mcb}[1]{{\mathcalboondox #1}}
\definecolor{amethyst}{rgb}{0.6, 0.4, 0.8}
\title{Hydrodynamics for the ABC model with slow/fast boundary}
\author{
 Patricia Gon\c{c}alves \\
 \texttt{pgoncalves@tecnico.ulisboa.pt}
 \and
 Ricardo Misturini \\
 \texttt{ricardo.misturini@ufrgs.br}
 \and
  Alessandra Occelli \\
  \texttt{alessandra.occelli@ens-lyon.fr}
}
\date{\today}
\begin{document}
\maketitle
%keywords:
%Hydrodynamic limit, ABC model, slow/fast boundary, coupled equations.

\begin{abstract}
In this article, we consider the ABC model in contact with slow/fast reservoirs. In this model, there is at most one particle per site, which can be of type  $\alpha\in\{A,B,C\}$ and particles exchange positions in the discrete set of points $\{1,\cdots, N-1\}$ with a weakly asymmetric rate that depends on the type of particles involved in the exchange mechanism. At the boundary points $x=1, N-1$ particles can be injected or removed with a rate that depends on the type of particles involved. We prove that the hydrodynamic limit, in the diffusive time scale, is given by a system of non-linear coupled equation with several boundary conditions, that depend on the strength of the reservoir's action. 

\end{abstract}

\section{Introduction}\label{sec:intro}

The ABC model, introduced by Evans el al. \cite{ekkm1-1998,ekkm2-1998}, is a system consisting of three species of particles, labeled  $A$, $B$, and $C$. In its original and most studied version, the particles are located on a one-dimensional discrete torus with $N$ points (one particle per site). The system evolves under nearest neighbor transpositions: $AB\to BA$, $BC\to CB$, $CA\to AC$ with rate $q\leq1$ and $BA\to AB$, $CB\to BC$, $AC\to CA$ with rate $1/q$. In particular, for this dynamics defined on the torus, the total number of particles of each species, $N_A$, $N_B$ and $N_C$, is conserved and $N_A+N_B+N_C=N$. The invariant measure is explicitly computed only when $N_A=N_B=N_C$, in which case the dynamics is reversible with respect to the Gibbs measure of a certain Hamiltonian having \emph{long range} pair interactions. When $q<1$, the bias in the dynamics drives, say, a $B$ particle to move to the right when it is inside a region of $A$ particles and to the left when it is inside a region of $C$ particles. Therefore starting from an arbitrary configuration the system will reach, after a relatively short time, a metastable state of the type $\ldots AABBBBCCCAAABB\ldots$ with blocks of particles (pure regions) located in the cyclic order $ABC$. As discussed in \cite{ekkm1-1998,ekkm2-1998}, in the thermodynamic limit $N\to\infty$, with $N_\alpha/N \to r_\alpha$ for $\alpha\in\{A,B,C\}$, and $q<1$ constant, the system segregates into three pure $A$, $B$ and $C$ clusters. In \cite{m2016}, in a proper time scale, the asymptotic dynamics among these stable configurations, which differ only by rotation, was studied in the strongly asymmetric regime, where $q\to0$ when $N\to\infty$.

While there is always strong separation when $q<1$ is held fixed, particles exchange symmetrically when $q=1$, in which case all configurations are equally probable. So, a natural scaling to investigate, introduced by Clincy et al. \cite{cde2003}, is the \textit{weakly asymmetric} regime where $q=e^{-\beta/(2N)}$, $N\to\infty$ and $\beta>0$ is a fixed control parameter which plays the role of the inverse temperature. In this regime, they observed phase transitions in the invariant measure $\tilde \mu^{\beta}_N$, at a critical value $\beta_c$. For equal densities $(r_A=r_B=r_C=1/3)$ it is known that $\beta_c=2\pi\sqrt3$. This phase transition, also analysed in \cite{bdlw2008} and \cite{aclmms2009}, can be stated in terms of the  free energy functional $\mcb{F}_\beta$ associated to $\tilde\mu^\beta_N$. The typical configurations of the system as $N\to\infty$ are described by the density profiles that minimize $\mcb{F}_\beta(\rho^A, \rho^B, \rho^C)$, where for $\alpha\in\{A,B,C\}$, $\rho^\alpha$ denotes the density of particles of type $\alpha$. For $\beta<\beta_c$ (disordered high temperature phase) $\mcb{F}_\beta$ has an unique minimizer; for $\beta>\beta_c$ (segregated low temperature phase) $\mcb{F}_\beta$ has a continuum of minimizers, parameterized by translation. As discussed in \cite{aclmms2009} this means that in the limit $N\to\infty$,  $N_\alpha/N \to r_\alpha$, $\alpha\in\{A,B,C\}$, for $\beta<\beta_c$ the invariant measure $\tilde \mu^{\beta}_N$ converges in the sense of Definition \ref{def:measure:associated} to the deterministic flat profile $(\rho^A,\rho^B, \rho^C)=(r_A,r_B,r_C)$, while for $\beta>\beta_c$ the limiting profile in the sense of convergence in distribution, for $\tilde \mu^\beta_N$,  is random and space-dependent. 

In \cite{aclmms2009} the model was study on a one-dimensional lattice with $N$ sites with closed (zero flux) boundary. The dynamics is the same as above, except that a particle at $x=1$ (respectively $x=N$) can only jump to the right (respectively left).
In contrast with the dynamics defined on the torus, for the system with this boundary mechanism, independently from the values of $N_A$, $N_B$ and $N_C$, the dynamics always satisfies detailed balance with respect to a Gibbs measure $\tilde \mu^\beta_N(\cdot)=Z_{N,\beta}^{-1}e^{-\beta H_N(\cdot)}$, where  $H_N(\cdot)$ is an explicit Hamiltonian function of the configuration. It coincides with the invariant measure on the torus when $N_A=N_B=N_C$. 

As discussed in \cite{cde2003}, \cite{bdlw2008} and \cite{bcp2011}, for the dynamics on the torus, in the weakly asymmetric regime $q=e^{-\beta/2N}$, under the diffusive time scale $tN^2$ the density profiles of the three species $(\rho^A_t,\rho^B_t,\rho^C_t)$ evolve according to the system of hydrodynamic equations
\begin{equation}\label{hidro_toro}
\begin{cases}
\partial_t \rho^A=\Delta \rho^A+\beta\nabla[\rho^A(\rho^B-\rho^C)],\\
\partial_t \rho^B=\Delta \rho^B+\beta\nabla[\rho^B(\rho^C-\rho^A)],\\
\partial_t \rho^C=\Delta \rho^C+\beta\nabla[\rho^C(\rho^A-\rho^B)],\\
\end{cases}
\end{equation}
where $\Delta$ and $\nabla$ denote, respectively, the Laplacian and gradient operators on the continuous torus.
One can make the equivalent choice of interpreting particles of species $C$ as holes, denoted by $\emptyset$ and looking only at the evolution of the density profiles for $A$ and $B$. As a consequence of the exclusion rule, $\rho^A(x)+\rho^B(x)+\rho^\emptyset(x)=1$ for any $x\in[0,1]$, the hydrodynamic equations read as
\begin{equation*}%\label{hidro_toro_emptyset}
\begin{cases}
\partial_t \rho^A=\Delta \rho^A+\beta\nabla[\rho^A(\rho^A+2\rho^B-1)]\\
\partial_t \rho^B=\Delta \rho^B-\beta\nabla[\rho^B(2\rho^A+\rho_B-1)].
\end{cases}
\end{equation*}
The hydrodynamic equations for the weakly asymmetric model on the torus have been formally derived in~\cite{master}.

In the present work, we study a version of the weakly asymmetric ABC model on the interval $\{1,\ldots, N-1\}$ with a boundary dynamics  that allow the exchange of particles with reservoirs on both sides of the bulk, with a strength that can be regulated by some parameters that increase or decrease the reservoirs' action. The precise definition of the model is given in the next section. For this model we prove the hydrodynamic limit for the density of each type of particle, whose hydrodynamic equation is given by \eqref{hidro_toro} and with Dirichlet or Robin boundary conditions,  that depend on the rates of exchange of particles with the reservoirs. In this  non-conservative model, unlike the case considered in \cite{aclmms2009}, the invariant measure is not reversible and it is not explicitly known.

Our result is an extension of what was considered for the case of one-species in \cite{BMNS17} and \cite{capitao}.
The strategy of proof we employ is based on the entropy method introduced in \cite{GPV}. The idea of the method  results of combining the tightness of the sequence of empirical measures with the unique characterization of the limit points, which gives convergence of the whole sequence. This limit point is supported on a trajectory of measures,  absolutely continuous with respect to the  Lebesgue measure and whose density is the unique weak solution of the corresponding hydrodynamic equation. Our model has  weakly asymmetric  rates and a boundary dynamics. These two features bring at the macroscopic level a system of non-linear equations with several boundary conditions. To prove the hydrodynamic limit we need to obtain several replacement lemmas. One is necessary at the bulk in order to overcome the fact that the dynamic is weakly asymmetric and one needs to make a proper linearisation at the microscopic level. Other replacements are needed at the boundary so that one recognizes the boundary conditions of Dirichlet and Robin type. This last replacement, which consists of replacing occupations variables close to the boundary by the respective density reservoirs,  is crucial, and we obtain it by making a comparison with the action of the adjoint operator at those occupation variables.

There are many intriguing questions that we leave for future work. The first one has to do with the derivation of the hydrostatic limit, i.e. the hydrodynamic limit starting from the stationary measure of the system. The second one is related to  the derivation of the fluctuations around the hydrodynamical profile. Since the equations are coupled, determining the correct fluctuation fields and the limiting equations is  a very interesting problem. Our method could also be employed to other variations of the dynamics, as e.g. considering $M$-types of particles and allowing a finite number of particles per site.

Here follows an outline of this work. In Section \ref{sec:model} we introduce the model and we state our main result. In Section \ref{sec:heuristics} we give an heuristic argument which allows recognizing the limit equation and the respective boundary conditions, depending on the boundary strength. Section \ref{sec:tightness} is devoted to the proof of tightness of the sequence of  empirical measures associated to each type of particle. In Section \ref{sec:charac} we rigorously characterize the limit point and Section \ref{sec:RL} is devoted to the proof of the replacement lemmas that are needed along the proofs. In Appendix \ref{sec:uniqueness} we prove the uniqueness of weak solutions of the equations we derive.

\section{The ABC model with slow/fast boundary}\label{sec:model}

Let $\theta\geq \delta$ and $\theta\geq 1$, $\beta,\tilde\beta>0$ 
and $r_{AB},r_{B\emptyset},r_{\emptyset A},r_{BA},r_{\emptyset B},r_{A\emptyset}>0$. Let $N\in\N$. We consider the one-dimensional two-species weakly asymmetric simple exclusion process with state space $\Omega_N=\{A,B,\emptyset\}^{\Lambda_N}$, where $\Lambda_N = \{1, \dots, N-1\}$. A configuration in $\Omega_N$ is denoted by $\eta$, where $\eta(x)=\alpha$ if site $x$ is occupied by a particle of type $\alpha$. We make the convention that $\alpha+1,\,\alpha+2,\ldots$ denote the particle types that are successors to $\alpha$ in the cyclic order $AB\emptyset$. For $\alpha\in\{A,B,\emptyset\}$, and $x\in\Lambda_N$ we denote by $\xi^\alpha_x:\Omega_N\to\{0,1\}$ the function that indicates that there is a particle of type $\alpha$ at site $x$, i.e, $\xi^\alpha_x(\eta)=1$ if $\eta(x)=\alpha$ and $\xi^\alpha_x(\eta)=0$ otherwise. As discussed in the previous section, this model is known in literature as the ABC model, in which case, holes $\emptyset$ are described as particles of type $C$. The dynamics is the following: a couple of particles $(\eta(x),\eta(x+1))=(\alpha,\alpha+1)$ swaps positions with rate $1-{\frac{\beta}{2N}}$, while the configuration $(\eta(x),\eta(x+1))=(\alpha+1,\alpha)$ is inverted with rate $1+{\frac{\beta}{2N}}$. This implies that particles prefer to arrange themselves in cyclic alphabetical order. At the boundary, particles are injected or removed with the same behaviour, but the swapping depends on different rates $r_{\alpha \beta}$ according to the relative positions of the particle at $x=1,N-1$ and at the corresponding reservoirs. 

The infinitesimal generator $\mcb L_N$ of the Markov process acts on functions $f : \Omega_N \to \R$ as $\mcb L_N =\mcb L_N^L + \mcb L_N^B +\mcb L_N^R$, where the central term corresponds to the dynamics of the bulk, while the terms on the left and on the right are the generators corresponding to the left and the right boundary, respectively. The bulk generator is
\begin{equation*}%\label{eq:bulk_generator}
 \mcb L_N^B f(\eta)=\sum_{x=1}^{N-2}c_{x,x+1}(\eta)[f(\eta^{x,x+1})-f(\eta)]
\end{equation*}
where $\eta^{x,x+1}$ is the configuration obtained from $\eta$ by exchanging particles at sites $x$ and $x+1$, i.e,
\begin{equation*}
\eta^{x,x+1}(y)=\begin{cases}
\eta(y),& \text{ if } y\notin \{x,x+1\},\\
\eta(x+1) & \text{ if } y=x,\\ 
\eta(x) & \text{ if } y=x+1,
\end{cases}
\end{equation*}
and
\begin{equation*}%\label{eq:bulk_rates}
 c_{x,x+1}(\eta)=\begin{cases}
                  1+{\frac{\beta}{2N}} & \text{if } (\eta(x),\eta(x+1))\in\{(B,A),(\emptyset,B),(A,\emptyset)\},\\
                  1-{\frac{\beta}{2N}} & \text{if } (\eta(x),\eta(x+1))\in\{(A,B),(B,\emptyset),(\emptyset,A)\}.
                 \end{cases}
\end{equation*}
The boundary generators are given by
\begin{equation}\label{eq:boundary_generator}
\begin{aligned}
 \mcb L_N^L f(\eta)= & c^+_{1}(\eta)[f(\eta^{1,+})-f(\eta)]+c^-_{1}(\eta)[f(\eta^{1,-})-f(\eta)]\\
 \mcb L_N^R f(\eta)= & c^+_{N-1}(\eta)[f(\eta^{N-1,+})-f(\eta)]+ c^-_{N-1}(\eta)[f(\eta^{N-1,-})-f(\eta)],
 \end{aligned} 
\end{equation}
where 
\begin{equation*}
 \begin{aligned}
   & c^+_{1}(\eta)=\big(\frac{1}{N^\delta}+\frac{\tilde\beta}{2N^\theta}\big)\big[\xi^A_1(\eta)r_{AB}+\xi^B_1(\eta)r_{B\emptyset}+\xi^\emptyset_1(\eta)r_{\emptyset A}\big],\\
  &  c^-_{1}(\eta)=\big(\frac{1}{N^\delta}-\frac{\tilde\beta}{2N^\theta}\big)\big[\xi^A_1(\eta)r_{A\emptyset}+\xi^B_1(\eta)r_{BA}+\xi^\emptyset_1(\eta)r_{\emptyset B}\big],\\
 &  c^+_{N-1}(\eta)=\big(\frac{1}{N^\delta}-\frac{\tilde\beta}{2N^\theta}\big)\big[\xi^A_{N-1}(\eta)\tilde{r}_{AB}+\xi^B_{N-1}(\eta)\tilde{r}_{B\emptyset}+\xi^{\emptyset}_{N-1}(\eta)\tilde{r}_{\emptyset A}\big],\\
  & c^-_{N-1}(\eta)=\big(\frac{1}{N^\delta}+\frac{\tilde\beta}{2N^\theta}\big)\big[\xi^A_{N-1}(\eta)\tilde{r}_{A\emptyset }+\xi^B_{N-1}(\eta)\tilde{r}_{BA}+\xi^\emptyset_{N-1}(\eta)\tilde{r}_{\emptyset B}\big].
 \end{aligned}
\end{equation*}
and $\eta^{x,\pm}$ is obtained from $\eta$ when at site $x$ a particle $\alpha$ is replaced by $\alpha\pm1$, i.e,
\begin{equation*}
\eta^{x,\pm}(y)=\begin{cases}\eta(y) & \text{ if }y\neq x, \\ \eta(x)\pm1 & \text{ if } y=x. \end{cases}
\end{equation*}
Observe that when $\theta > \delta$ the rates given above are well defined for $N$ sufficiently large, but in the case $\theta=\delta$ we need to impose the restriction \begin{equation}\label{eq:rest_beta_tilde}
\tilde\beta< 2.\end{equation} 
A simple computation shows that 
\begin{equation}\label{confusion}
\begin{aligned}
\xi^A_x(\eta^{x,+})=\xi^\emptyset_x(\eta),\quad \xi^B_x(\eta^{x,+})=\xi^A_x(\eta),\quad \xi^\emptyset_x(\eta^{x,+})=\xi^B_x(\eta),\\
\xi^A_x(\eta^{x,-})=\xi^B_x(\eta),\quad \xi^B_x(\eta^{x,-})=\xi^\emptyset_x(\eta),\quad \xi^\emptyset_x(\eta^{x,-})=\xi^A_x(\eta). 
\end{aligned}
\end{equation}
To understand these relations, let us look at the top and leftmost one in \eqref{confusion}. The following chain of equivalences holds:
\begin{equation*}
\xi^A_x(\eta^{x,+})=1\iff \eta^{x,+}(x)=A\iff \eta(x)=\emptyset\iff \xi^\emptyset_x(\eta)=1.
\end{equation*}

Let us now describe the dynamics at the boundary. For example, let us look at the leftmost term on the  first line of \eqref{eq:boundary_generator}. If there is a particle of species $A$ at $x=1$, then this particle will be replaced by a particle of species $B$ with rate $r_{AB}\big(\frac{1}{N^\delta}+\frac{\tilde{\beta}}{2N^\theta}\big)$ or by a hole/it will disappear with rate $r_{A\emptyset}\big(\frac{1}{N^\delta}-\frac{\tilde{\beta}}{2N^\theta}\big)$.
For $x=1$ or $x=N-1$, we can interpret $c_x^+(\eta)$ as the rate for upgrading a particle at site $x$ and $c_x^-(\eta)$ as the rate for downgrading.

If we impose the following relationships on the rates
\begin{equation}\label{r_AB=r_B}
\begin{aligned}
	r_{AB}=r_{\varnothing B} &= r_B, & \qquad\tilde r_{AB}=\tilde r_{\varnothing B} &= \tilde r_B\\
	r_{B\varnothing}=r_{A\varnothing} &= r_\varnothing, & \qquad  \tilde r_{B\varnothing}=\tilde r_{A\varnothing} &= \tilde r_\varnothing\\
	r_{\varnothing A}=r_{BA} &= r_A,  &\qquad \tilde r_{\varnothing A}=\tilde r_{BA} &= \tilde r_A\\
	\end{aligned}
\end{equation}
with
\begin{equation}\label{relations_of_r_AB...}
r_A+r_B+r_\emptyset=1  \quad\qquad \tilde r_A+\tilde r_B+\tilde r_\emptyset=1,
\end{equation}
we can interpret $r_A$, $r_B$, $r_\emptyset$ as the concentration of particles of types $A$, $B$ and $\emptyset$ at the left reservoir, and $\tilde r_A$, $\tilde r_B$, $\tilde r_\emptyset$ as the concentrations at the right  reservoir. With this choice of rates the boundary generators become
\begin{equation}\label{eq:Lgen}
\begin{aligned}
\mcb L_N^L f(\eta)& =\big(\frac{1}{N^\delta}+\frac{\tilde\beta}{2N^\theta}\big)(\xi^A_1(\eta)r_{B}+\xi^B_1(\eta)r_{\emptyset}+\xi^\emptyset_1(\eta)r_{A})[f(\eta^{1,+})-f(\eta)]\\ 
&+\big(\frac{1}{N^\delta}-\frac{\tilde\beta}{2N^\theta}\big)(\xi^A_1(\eta)r_{\emptyset}+\xi^B_1(\eta)r_{A}+\xi^\emptyset_1(\eta)r_{B})[f(\eta^{1,-})-f(\eta)],
\end{aligned}
\end{equation}
\begin{equation*}%\label{eq:Rgen}
\begin{aligned}
\mcb L_N^R f(\eta)&=\big(\frac{1}{N^\delta}-\frac{\tilde\beta}{2N^\theta}\big) (\xi^A_{N-1}(\eta)\tilde{r}_{B}+\xi^B_{N-1}(\eta)\tilde{r}_{\emptyset}+\xi^{\emptyset}_{N-1}(\eta)\tilde{r}_{A}) [f(\eta^{N-1,+})-f(\eta)]\\
&+ \big(\frac{1}{N^\delta}+\frac{\tilde\beta}{2N^\theta}\big) (\xi^A_{N-1}(\eta)\tilde{r}_{\emptyset}+\xi^B_{N-1}(\eta)\tilde{r}_{A}+\xi^\emptyset_{N-1}(\eta)\tilde{r}_{B}) [f(\eta^{N-1,-})-f(\eta)].
\end{aligned} 
\end{equation*}

The dynamics can be summarized in the figure below.

\begin{figure}[h]
\centering
 \begin{tikzpicture}
 %% interval
  \draw[line width=.35mm] (0,0) -- (11,0);
  \draw[line width=.35mm] (0,-.1) -- (0,0.1);
  \draw[line width=.35mm] (1,-.1) -- (1,0.1);
  \draw[line width=.35mm] (2,-.1) -- (2,0.1);
  \draw[line width=.35mm] (3,-.1) -- (3,0.1);
  \draw[line width=.35mm] (4,-.1) -- (4,0.1);
  \draw[line width=.35mm] (5,-.1) -- (5,0.1);
  \draw[line width=.35mm] (6,-.1) -- (6,0.1);
  \draw[line width=.35mm] (7,-.1) -- (7,0.1);
  \draw[line width=.35mm] (8,-.1) -- (8,0.1);
  \draw[line width=.35mm] (9,-.1) -- (9,0.1);
  \draw[line width=.35mm] (10,-.1) -- (10,0.1);
  \draw[line width=.35mm] (11,-.1) -- (11,0.1);
  %% left reservoir
  \shade [ball color=yellow] (0,0.3) circle [radius=.25cm];
  \shade [ball color=yellow] (0,0.8) circle [radius=.25cm];
  \shade [ball color=lightcyan] (0,1.3) circle [radius=.25cm];
  \shade [ball color=orange] (0,1.8) circle [radius=.25cm];	
  %% right reservoir
  \shade [ball color=lightcyan] (11,0.3) circle [radius=.25cm];
  \shade [ball color=orange] (11,0.8) circle [radius=.25cm];
  \shade [ball color=orange] (11,1.3) circle [radius=.25cm];
  \shade [ball color=yellow] (11,1.8) circle [radius=.25cm];		
  \shade [ball color=lightcyan] (11,2.3) circle [radius=.25cm];
  %% bulk
  \shade [ball color=lightcyan] (1,0.3) circle [radius=.25cm];\shade [ball color=yellow] (2,0.3) circle [radius=.25cm];
  \shade [ball color=yellow] (3,0.3) circle [radius=.25cm];
  \shade [ball color=orange] (4,0.3) circle [radius=.25cm];
  \shade [ball color=lightcyan] (5,0.3) circle [radius=.25cm];
  \shade [ball color=lightcyan] (6,0.3) circle [radius=.25cm];
  \shade [ball color=orange] (7,0.3) circle [radius=.25cm];
  \shade [ball color=yellow] (8,0.3) circle [radius=.25cm];
  \shade [ball color=lightcyan] (9,0.3) circle [radius=.25cm]; 
  \shade [ball color=yellow] (10,0.3) circle [radius=.25cm];
  %%%% jump rates
  %% left boundary BC
  \draw[->,line width=.35mm] (0,2.2) to [out=20,in=80] (1,1.6);
  \node[font=\large] at (0.5,2.7){ $r_B(\tfrac{1}{N^\delta}-\tfrac{\tilde\beta}{2N^\theta})$};
  \draw[->,line width=.35mm] (1,-.2) to [out=-120,in=-80] (-.7,-.2);
  %% right boundary AC
  \draw[<-,line width=.35mm] (10,2) to [out=110,in=140] (11,2.6);
  \node[font=\large] at (10.5,3.1){ $\tilde r_\emptyset(\tfrac{1}{N^\delta}+\tfrac{\tilde\beta}{2N^\theta})$};
  \draw[->,line width=.35mm] (10,-.2) to [out=-80,in=-120] (11.7,-.2);
  %% AB bulk
  \draw[<->,line width=.35mm] (3,.6) to [out=60,in=110] (4,.6);
  \node[font=\large] at (3.5,1.2){ $1-\tfrac{\beta}{2N}$};
  %% CB bulk
  \draw[<->,line width=.35mm] (6,.6) to [out=60,in=110] (7,.6);
  \node[font=\large] at (6.5,1.2){ $1+\tfrac{\beta}{2N}$};
  %% AC bulk
  \draw[<->,line width=.35mm] (8,.6) to [out=60,in=110] (9,.6);
  \node[font=\large] at (8.5,1.2){ $1+\tfrac{\beta}{2N}$};

 \end{tikzpicture}
 \caption{Dynamics of the ABC model with reservoirs at $x=0,N$. Particles of species $A$ in yellow, $B$ in orange and $C$/holes in light cyan.}
\end{figure}
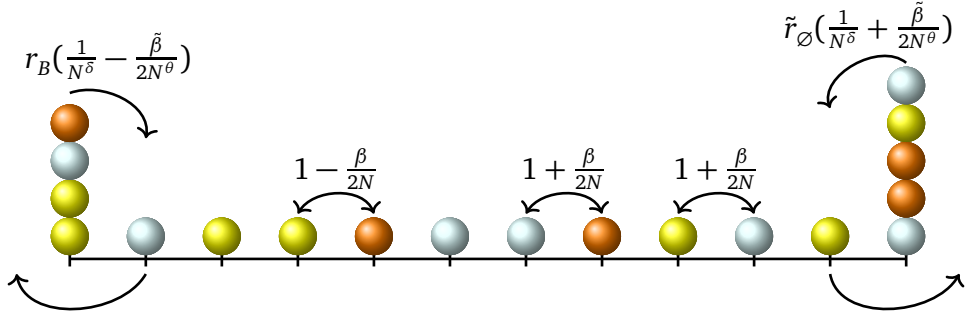

\subsection{Hydrodynamic equations }\label{sec:hydroeq}

{Our goal in this article is to describe the space-time evolution of the density of each species of particles as solutions of a system of partial differential equations (PDEs).}

The solutions that we obtain are in the  weak sense and to define them properly, we first need to define the set of functions that we test the solutions with. To that end,  for $m,n\in\mathbb N_0$,  let $C^{m,n}([0, T] \times [0,1])$ be the set of continuous functions defined on $[0, T] \times [0,1] $ that are $m$ times differentiable on the first variable and $n$ times differentiable on the second variable, and with continuous derivatives. For a function $\phi\in C^{m,n}([0, T] \times [0,1])$, we denote by $\partial_t\phi$ its derivative with respect to the time variable $t$ and by $\nabla \phi$ and $\Delta \phi$ its first and second derivatives with respect the the space variable. We use $\phi_t(u)$ as a notation to $\phi(t,u)$. We also denote  $C^{m,n}_c ([0,T] \times [0,1])$ as the set of  functions $G \in C^{m,n}([0, T] \times[0, 1])$ such that for each time $s$,  $G_s$ has a compact support included in $(0,1)$ and we denote by $C_{c}^{m}(0,1)$ (resp. $C_c^\infty (0,1)$) the set of all $m$ continuously differentiable (resp. smooth) real-valued  functions defined on $(0,1)$ with compact support. 
We denote by $\langle\cdot,\cdot\rangle$ the inner product on $L^2([0,1])$ and by $\langle\cdot,\cdot\rangle_\mu$ the same inner product with respect to a measure $\mu$. 
\begin{defin}
Let $\mcb H^1(0, 1)$ be the set of all locally integrable functions $g : (0, 1) \to\R$ such that there exists a function $\partial_u g\in L^2(0,1)$ satisfying
$
 \langle \partial_u \varphi,g\rangle=-\langle\partial_u g,\varphi\rangle,
$
for all $\varphi\in C^\infty_c((0,1))$. For $g \in \mcb H^1(0,1)$ we define the norm by
\begin{equation*}
 \|g\|^2_{\mcb H^1(0,1)}=\|g\|^2_{L^2(0,1)}+\|\partial_u g\|^2_{L^2(0,1)}.
\end{equation*}
\end{defin}
The elements in $\mcb H^1(0,1)$ coincide a.e. with absolutely continuous functions whose derivative (which exist a.e.) belong to $L^2(0,1)$.
\begin{defin}
Let $L^2([0,T]\times \mcb H^1(0,1))$ be the set of measurable functions $f : [0, T] \to \mcb H^1(0, 1)$ such that
\begin{equation*}
 \int_0^T\|f_t\|^2_{\mcb  H^1(0, 1)} dt<\infty.
\end{equation*}
\end{defin}

From here on, we fix an initial measurable profile $\mathfrak g=(\mathfrak g^A, \mathfrak g^B, \mathfrak g^\emptyset):[0,1]\to[0,1]^3$ with $\mathfrak g^A+ \mathfrak g^B+ \mathfrak g^\emptyset=1$.  This will correspond to the initial condition in all the PDEs that we derive. 
\begin{defin}
\label{def:Dirichlet}
We say that  $\rho=(\rho^A,\rho^B,\rho^\emptyset ):[0,T]\times[0,1] \to [0,1]^3$ is a weak solution of the   system of parabolic equations with Dirichlet boundary conditions
\begin{equation}\label{eq:Dirichlet}
\begin{cases}
&\hspace{0.15cm}\partial_{t}\rho^\alpha=\Delta\, {\rho}^\alpha +\beta\nabla [\rho^\alpha(\rho^{\alpha+1}-\rho^{\alpha+2})],  \\
&{ \rho}^\alpha _{t}(0)=r_\alpha, \quad { \rho}^\alpha_{t}(1)=\tilde r_\alpha,\quad t \in (0,T],\\
& \rho_0^\alpha(u)=\mathfrak g^\alpha(u), \quad u \in [0,1],
\end{cases} \quad \qquad \alpha\in\{A,B,\emptyset\},
\end{equation}
if for each  $\alpha\in\{A,B,\emptyset\}$  it holds:
\begin{itemize}
\item [$\mathbf{D1.}$]$\rho^\alpha \in L^2([0,T]\times \mcb H^1(0,1))$, \item [$\mathbf{D2.}$] ${ \rho} ^\alpha_{t}(0)=r_\alpha$ and ${ \rho}^\alpha_{t}(1)=\tilde r_\alpha$ for a.e. $t \in (0,T]$, \item  [$\mathbf{D3.}$]  for all $t\in [0,T]$ and any $\phi \in C_c^{2} ([0,1])$, $F^\alpha_{\rm Dir}(t,\phi,\rho,\mathfrak{g})=0$, where
\begin{equation}\label{eq:weak_dir}
F^\alpha_{\rm Dir}(t,\phi,\rho,\mathfrak{g}):=\langle \rho^\alpha_{t},  \phi\rangle  -\langle \mathfrak g^\alpha,   \phi \rangle
- \int_0^t\langle \rho^\alpha_{s},\Delta \phi\rangle -\beta\langle \rho^\alpha_s(\rho_s^{\alpha+1}-\rho_s^{\alpha+2}),\nabla  \phi\rangle ds.
\end{equation}
\end{itemize}
\end{defin}

\begin{defin}
\label{def:Robin_4}
Let $\kappa_1, \kappa_2\in\mathbb R$. We say that  $\rho=(\rho^A,\rho^B,\rho^\emptyset ):[0,T]\times[0,1] \to [0,1]^3$ is a weak solution of the system of parabolic equations with Robin boundary conditions
 \begin{equation}\label{eq:Robin_4}
 \begin{cases}
\hspace{0.45cm} \partial_{t}\rho^\alpha=\Delta\, {\rho}^\alpha +\beta\nabla [\rho^\alpha (\rho^{\alpha+1}-\rho^{\alpha+2})]\\
 { \nabla \rho}^\alpha _{t}(0)=-\beta\rho^\alpha_t(0)(\rho^{\alpha+1}_t(0)-\rho^{\alpha+2}_t(0))\\
 \hspace{1.75cm}-\kappa_1\Big((2r_{\alpha+2}-1)\rho_t^\alpha(0)-2r_\alpha\rho_t^{\alpha+1} (0)+r_\alpha\Big)-\kappa_2(r_\alpha-\rho_t^\alpha(0)), \quad t \in (0,T],\\
  { \nabla \rho}^\alpha _{t}(1)=-\beta\rho^\alpha_t(1)(\rho^{\alpha+1}_t(1)-\rho^{\alpha+2}_t(1))\\
  \hspace{1.75cm}-\kappa_1\Big((2\tilde r_{\alpha+2}-1)\rho_t^\alpha(1)-2\tilde r_\alpha\rho_t^{\alpha+1}(1)+\tilde r_\alpha)\Big)+\kappa_2(\tilde r_\alpha-\rho_t^\alpha(1)), \quad t \in (0,T],\\
  \hspace{0.35cm} \rho_0^\alpha(u)  = \mathfrak g^\alpha(u), \quad u \in [0,1],
 \end{cases}
\end{equation}
if for each  $\alpha\in\{A,B,\emptyset\} $  it holds: \begin{itemize}
\item [$\mathbf{R1.}$]$\rho^\alpha \in L^2([0,T]\times \mcb H^1(0,1))$,  
\item  [$\mathbf{R2.}$] for all $t\in [0,T]$ and any $\phi \in C^{1,2} ([0,T]\times[0,1])$, $F^\alpha_{\rm Rob}(t,\phi,\rho,\mathfrak{g})=0$, where
\begin{equation}\label{eq:weak_rob_4}\begin{split}
F^\alpha_{\rm Rob}(t,\phi,\rho,\mathfrak{g}):&=\langle \rho^\alpha_{t},  \phi_{t}\rangle  -\langle \mathfrak g^\alpha,   \phi_{0} \rangle
- \int_0^t\langle \rho^\alpha_{s},(\Delta + \partial_s) \phi_{s}\rangle ds\\&+\beta\int_0^t\langle \rho^\alpha_s(\rho_s^{\alpha+1}-\rho_s^{\alpha+2}),\nabla\phi_s\rangle ds\\&-\int_{0}^t\nabla \phi_s(0)\rho^\alpha_s(0)-\nabla \phi_s(1)\rho^\alpha_s(1)ds\\&-\kappa_1\int_0^t  \phi_s(0){\left[(2r_{\alpha+2}-1)\rho^\alpha_s(0)-2r_\alpha\rho^{\alpha+1}_s(0)+r_\alpha\right] }ds\\
  &-\kappa_1\int_0^t  \phi_s(1){\left[(1-2\tilde r_{\alpha+2})\rho^\alpha_{s}(1)+2\tilde r_\alpha\rho_s^{\alpha+1}(1)-\tilde r_\alpha\right]} ds\\&-\kappa_2\int_0^t \left[\phi_s(0)(r_\alpha-\rho^\alpha_s(0))+\phi_s(1)(\tilde r_\alpha-\rho^{\alpha}_s(1))\right]ds.\end{split}
\end{equation}
\end{itemize}
\end{defin}
\begin{remark}
We observe that in the Dirichlet regime we use test functions which are only space dependent, while in the Robin regime we use test functions also time dependent. The reason for this is that in the Dirichlet case, this space of test functions is enough to prove uniqueness of the weak solution, while in the Robin case we need a bigger space of test functions.
\end{remark}
In Section \ref{sec:uni_dir} we prove the uniqueness of weak solutions of the equation with Dirichlet boundary conditions, as given in Definition \ref{def:Dirichlet}. In Appendix \ref{sec:uni_rob} we also give an heuristic argument for the proof of  the uniqueness of weak solutions  in the Robin case, by assuming that the solution itself can be taken as a test function, which would need later a regularization both in time and space. Nevertheless, we believe that since we did not impose any boundary condition on the test function, our notion of solution should be unique.

\subsection{Hydrodynamic limit }\label{sec:hydrodynamics}

Here we state the main theorem concerning the hydrodynamic limit of the empirical measure.
Assuming the relations \eqref{r_AB=r_B} and \eqref{relations_of_r_AB...}, denote by $\{\eta_t:\,t\geq0\}$ the continuous-time Markov process on $\Omega_N$ with generator $N^2\mcb{L}_N$, and by $\mathbb P_{\mu_N}$ the probability measure that this process induces on the Skorohod space ${\mcb D}([0,T],\Omega_N)$, when it starts from a distribution $\mu_N$ on $\Omega_N$. Denote by $\mathbb{E}_{\mu_N}$ the expectation with respect to~$\mathbb{P}_{\mu_N}$.
Now we fix our initial set of probability measures.
 Fix an initial measurable profile $\mathfrak g=(\mathfrak g^A, \mathfrak g^B, \mathfrak g^\emptyset):[0,1]\to[0,1]^3$ with $\mathfrak g^A+ \mathfrak g^B+ \mathfrak g^\emptyset=1$.
 
\begin{defin}\label{def:measure:associated}
	We say that a sequence  of probability measures {$(\mu_N)_{N \geq 1}$} on $\Omega_N$  is {associated with the profile $\mathfrak g$} if for every continuous  function $\phi$,  for any $\epsilon >0$ and $\alpha\in\{A,B,\emptyset\}$, it holds
	\begin{equation}\label{measure_associated_profile}
	\lim_{N \rightarrow \infty} \mu_N \Big( \eta \in \Omega_N: \big|\dfrac{1}{N}\sum_{x \in \Lambda_{N} }\phi\left(\tfrac{x}{N} \right)\xi_x^\alpha(\eta) - \int_0^1 \phi(u) \mathfrak g^\alpha (u) du \big| > \epsilon \Big) =0. 
	\end{equation}
\end{defin}
\begin{thm}
\label{th:hyd_ssep}
Let  $\mathfrak g=(\mathfrak g^A, \mathfrak g^B, \mathfrak g^\emptyset):[0,1]\to[0,1]^3$ be a measurable function and let $\lbrace\mu _{N}\rbrace_{N\geq 1}$ be a sequence of probability measures  associated with $\mathfrak g$. For any $t\in[0,T]$, for any $\phi\in  C^0([0,1])$,  any $\alpha\in\{A,B,\emptyset\}$, and any $\epsilon>0$ it holds
\begin{equation}\label{limHidreform}
 \lim _{N\to\infty } \mathbb P_{\mu _{N}}\left( \eta_{\cdot} : \Big|\dfrac{1}{N}\sum_{x \in \Lambda_{N} }\phi\left(\tfrac{x}{N} \right)\xi_x^\alpha(\eta_{t}) -\int_0^1 \phi(u) \rho_{t}^\alpha  (u) du \Big|    > \epsilon \right)= 0,
\end{equation}
where $\rho=(\rho^A,\rho^B,\rho^\emptyset ):[0,T]\times[0,1] \to [0,1]^3$ is the unique weak solution of : 
%\vspace{0,1cm}
\begin{itemize}
\item [a)]  \eqref{eq:Dirichlet},  if $\delta<1\leq \theta$;
\item [b)]  \eqref{eq:Robin_4} with:
\begin{itemize}
\item [b1)] $\kappa_1=\kappa_2=0$, if $\theta \geq \delta>1$.
 
\item[b2)] $\kappa_1=\tilde \beta/2$ and  $\kappa_2=1$, if $\theta =1=\delta$ and $\tilde\beta<4/3$. 
\item [b3)] $\kappa_1=0$ and $\kappa_2=1$, if $\theta >1$ and $\delta=1$.

\end{itemize}
\end{itemize}
\end{thm}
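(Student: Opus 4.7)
The plan is to apply the entropy method of Guo--Papanicolaou--Varadhan, combined with the uniqueness of weak solutions proved in Appendix~\ref{sec:uniqueness}. For each species $\alpha\in\{A,B,\emptyset\}$ introduce the empirical measure
\[
\pi^{\alpha,N}_t(du) = \frac{1}{N}\sum_{x\in\Lambda_N} \xi^\alpha_x(\eta_t)\, \delta_{x/N}(du).
\]
Tightness of $\{\pi^{\alpha,N}\}_{N\geq 1}$ in the Skorohod space of measure-valued paths is established in Section~\ref{sec:tightness}, so it suffices to show that every limit point is concentrated on trajectories of the form $\rho^\alpha_t(u)\,du$, where $\rho=(\rho^A,\rho^B,\rho^\emptyset)$ is a weak solution of the appropriate PDE system; uniqueness then upgrades subsequential convergence in distribution to~\eqref{limHidreform}.

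To characterize the limit points I would work with Dynkin's martingale: for a test function $\phi$ (in $C^2_c([0,1])$ in the Dirichlet case and in $C^{1,2}([0,T]\times[0,1])$ in the Robin case), the process
\[
M^{\phi,\alpha,N}_t = \langle \pi^{\alpha,N}_t, \phi_t\rangle - \langle \pi^{\alpha,N}_0, \phi_0\rangle - \int_0^t \big(\partial_s + N^2\mcb L_N\big)\langle \pi^{\alpha,N}_s, \phi_s\rangle\, ds
\]
is a martingale with quadratic variation of order $1/N$, hence vanishes in $L^2$. The key computation is to expand $N^2\mcb L_N \xi^\alpha_x(\eta)$ using the identities~\eqref{confusion}: the bulk part produces a discrete Laplacian acting on $\phi$ plus a weakly asymmetric drift of order $\beta N \nabla_N \phi(x/N)$ applied to the cubic currents $\xi^\alpha_x(\xi^{\alpha+1}_{x+1} - \xi^{\alpha+2}_{x+1})$ coming from the cyclic swap rates; the boundary generators~\eqref{eq:Lgen} contribute terms of order $N^{2-\delta}$ and $\tilde\beta\, N^{2-\theta}$ concentrated at $\phi(1/N)$ and $\phi((N-1)/N)$.

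The main obstacle is to close the limit via the replacement lemmas of Section~\ref{sec:RL}. In the bulk, one-block and two-block estimates allow one to replace the microscopic product $\xi^\alpha_x\,\xi^{\alpha\pm 1}_{x+1}$ by the product of local empirical averages, and ultimately by $\rho^\alpha_s\rho^{\alpha\pm 1}_s$. Since the invariant measure is neither explicit nor reversible, the entropy production estimate has to be driven by the Dirichlet form of the symmetric part of $\mcb L_N$ only, with the weakly asymmetric drift and the boundary contribution absorbed into controlled errors. At the boundary the occupations $\xi^\alpha_1,\xi^\alpha_{N-1}$ must be replaced by the right macroscopic object: when $\delta<1\leq\theta$ the symmetric part of the boundary rate is faster than the diffusive scale, forcing thermalization $\xi^\alpha_1\approx r_\alpha$ and yielding the Dirichlet condition; when $\delta\geq 1$ the boundary terms survive at macroscopic scale, and the comparison of $\mcb L_N\xi^\alpha_1$ with the action of the adjoint, as mentioned in the introduction, produces linear contributions in $\rho^\alpha_s(0)$ whose coefficients are governed by the competition between the scales $N^{-\delta}$ and $\tilde\beta N^{-\theta}$. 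Tracking these constants carefully reproduces exactly the values of $\kappa_1,\kappa_2$ given in (b1)--(b3), while the restriction $\tilde\beta<4/3$ in (b2) arises as a positivity threshold for the effective boundary rate at the diffusive scale.

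After the replacements, the random expression under the martingale converges in $L^1$ to $F^\alpha_{\rm Dir}(t,\phi,\rho,\mathfrak g)$ or $F^\alpha_{\rm Rob}(t,\phi,\rho,\mathfrak g)$ evaluated along the limit trajectory. An energy estimate obtained from the entropy bound together with the Dirichlet form yields the Sobolev regularity $\rho^\alpha\in L^2([0,T];\mcb H^1(0,1))$ required by items $\mathbf{D1}$ and $\mathbf{R1}$, and in regime~(a) an additional boundary estimate gives $\rho^\alpha_t(0)=r_\alpha$ and $\rho^\alpha_t(1)=\tilde r_\alpha$ a.e., so $\mathbf{D2}$ holds. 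The uniqueness proved in Appendix~\ref{sec:uniqueness} then forces convergence of the full sequence to the unique weak solution, completing the proof of Theorem~\ref{th:hyd_ssep}.
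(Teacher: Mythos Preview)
Your outline matches the paper's strategy closely: tightness, Dynkin martingale expansion~\eqref{eq:martingaleA}, bulk and boundary replacement lemmas, the energy estimate for the $\mcb H^1$ regularity, and uniqueness to identify the limit. Two points of your narrative are off, however, and would mislead a reader.

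First, the restriction $\tilde\beta<4/3$ in case~(b2) does \emph{not} come from a positivity threshold for any effective boundary rate. The model is well defined for all $\tilde\beta<2$ (see~\eqref{eq:rest_beta_tilde}), and the derivation of the weak formulation in Section~\ref{sec:RobinNeumann} goes through for all such $\tilde\beta$. The extra restriction enters only in the uniqueness argument of Appendix~\ref{sec:uni_rob}: after applying Young's inequality to the cross boundary terms in~\eqref{A16}, one needs the bracketed coefficients to be nonpositive, and this forces $\tilde\beta<4/3$. So the origin is analytic (uniqueness of the limiting PDE), not probabilistic.

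Second, you place the adjoint comparison in the Robin regime, but in the paper it is the other way around. The computation of $\mcb L^{L,\star}_N$ in Section~\ref{sec:RLboundary} and the identity $\mcb L^{L,\star}_N\xi^\alpha_1=N^{-\delta}(r_\alpha-\xi^\alpha_1)+O(N^{-\theta})$ are used to prove Lemma~\ref{lem:RL_boundary}, which is precisely the replacement $\xi^\alpha_1\to r_\alpha$ needed for the Dirichlet condition~$\mathbf{D2}$ when $\delta<1\le\theta$. In the Robin regimes the boundary occupations are instead replaced by box averages via Lemma~\ref{lem:local_RL} and then by $\rho^\alpha_s(0),\rho^\alpha_s(1)$ through Lemma~\ref{lem:boundary_terms}, which relies on the $\mcb H^1$ regularity from the energy estimate. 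Also, the currents $g^\alpha_{x,x+1}$ in~\eqref{eq:martingaleA2} are quadratic in the occupation variables, not cubic.
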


\begin{figure}[htb!]
    \centering
	\begin{tikzpicture}[scale=0.15]
	\node[left, black] at (-15.8,9.5) {$\delta$};
	\node[right, black] at (15.5,-20.5) {$\theta$};
	\node[black] at (-16.5,-20.8) {$0$};
	\node[left] at (-15,-10) {\textcolor{black}{$1$}};
	\node[left] at (-5,-21) {\textcolor{black}{$1$}};

	\fill[color=white] (-15,-10) rectangle (-5,10);
	%\fill[color=classicrose] (-5,-10) rectangle (15,10);
		\fill[color=CornflowerBlue]  (-5,-10) -- (15,-10) -- (15,10)-- cycle;
	\fill[color=white] (-15,-20) -- (-5,-10) -- (-15,-10);
%	\fill[color=softblue] (-15,-20) -- (-5,-10) -- (-5,-20);
	\fill[color=Dandelion] (-5,-10) rectangle (15,-20);

 	\fill[color=Dandelion] (-5,-30) rectangle (15,-10);

   % \node[] at (-10,-17.5) {\small{{\textbf{\textcolor{nogreen}{5.}}}}};
    \node[] at (-3,-12) {\small{{\textbf{\textcolor{red}{b2)}}}}};
  %  \node[] at (-3,7) {\small{{\textbf{\textcolor{yellow}{b2)}}}}};
    \node[] at (17,0) {\small{{\textbf{\textcolor{CornflowerBlue}{b1)}}}}};
    \node[] at (13,-12) {\small{{\textbf{\textcolor{BlueViolet}{b3)}}}}};
       \node[] at (17,-30) {\small{{\textbf{\textcolor{Dandelion}{a)}}}}};
	
	\draw[-,=latex, BlueViolet,line width=2pt] (-5,-10) -- (15, -10) ;
	%\draw[-,=latex,yellow,line width=2pt] (-5,-10) -- (-5, 10) ;
	%\draw[-,=latex, nogreen,line width=2pt] (-5,-10) -- (-15, -20) ;
		\draw[-,=latex,Dandelion,line width=2pt] (-5,-10) -- (-5, -30) ;
	
	%\draw[dotted, thick, black] (-5,-10) -- (-5,-20);
	
	\draw[<-, thick, black] (-15,10.8) -- (-15,-20);
		\draw[-, thick, black] (-15,0) -- (-15,-30);
	\draw[<-,  thick, black] (15.8,-20) -- (-15,-20);
	
	\draw[orange,fill=red] (-5,-10) circle (9mm);

	\end{tikzpicture}
	\end{figure}
\begin{remark}
		We note that in item b2) in the statement of the theorem we imposed $\tilde\beta<4/3$. Nevertheless, the model is well defined if $\tilde\beta<2$, see \eqref{eq:rest_beta_tilde}. This more restrictive condition comes from the fact that our proof of the uniqueness of weak solutions only works  when $\tilde\beta<4/3$. If the uniqueness of weak solutions can be proved for $\tilde\beta<2$, then our result also holds for those values of $\tilde\beta$.
\end{remark}

\section{Heuristics for the hydrodynamic equation}\label{sec:heuristics}

In this section we give an heuristic argument which is the basis to derive the weak formulation of the respective hydrodynamic equations. Let $\mcb M$ be the space of positive measures on $[0,1]$ with total mass bounded by $1$ and endowed with the weak topology. For each $\alpha\in\{A,B,\emptyset\}$, denote by $\pi^{N,\alpha}:\Omega_N\to{\mcb M}$ the function that associates to each configuration $\eta$ the measure obtained by assigning mass $1/N$ to each particle of type $\alpha$:
\begin{equation*}
	\pi^{N,\alpha}(\eta,du)=\frac{1}{N}\sum_{x\in\Lambda_N}\xi^\alpha_x(\eta)\delta_{\frac xN}(du).
\end{equation*}
Thus, the empirical process $\pi^{N,\alpha}_t:=\pi^{N,\alpha}(\eta_t)$ is a Markov process on the space $\mcb M$. %\textcolor{blue}{Let $\pi^N=(\pi^{N,A},\pi^{N,B},\pi^{N,\emptyset})$ and denote by ${\mathbb Q}^N$ the probability on ${\cal D}([0,T], {\cal M}^3)$ defined by $\mathbb Q^N=\mathbb P_{\mu_N}(\pi^N)^{-1}$}.\marginpar{\textcolor{blue}{Maybe we don't need to write this}}

For a given function $\phi:[0,T]\times[0,1]\to\mathbb R$ we denote by  $\langle\pi^{N,\alpha}_t,\phi_t\rangle$ the integral of $\phi_t(\cdot)$ with respect to the measure $\pi^{N,\alpha}_t$ and observe that by Dynkin's formula,
\begin{equation}\label{dynkins_formula}
 \mcb M_t^{N,\alpha}(\phi)=\langle\pi^{N,\alpha}_t,\phi_t\rangle -\langle\pi^{N,\alpha}_0,\phi_0\rangle -\int_0^t  (N^2\mcb L_N+\partial_s)\langle\pi^{N,\alpha}_s,\phi_s\rangle ds
\end{equation}
is a martingale with respect to the natural filtration.
A simple, but long, computation shows that  last identity can be written as
\begin{equation}
\begin{aligned}\label{eq:martingaleA}
  \mcb M_t^{N,\alpha}(\phi)=\,&\langle\pi^{N,\alpha}_t,\phi_t\rangle -\langle\pi^{N,\alpha}_0,\phi_0\rangle -\int_0^t  \,\langle\pi^{N,\alpha}_s,\partial_s\phi_s\rangle ds-\int_0^t  \,\langle\pi^{N,\alpha}_s,\Delta_N\phi_s\rangle ds\\&
  +\frac{\beta}{N}\int_0^t  \sum_{x=1}^{N-2}\nabla^+_N\phi_s(\tfrac xN) g^\alpha_{x,x+1}(\eta_s)ds\\
  &-\frac{N^2}{N^{1+\delta}}\int_0^t \left[\phi_s(\tfrac{1}{N})f_1^\alpha(\eta_s)+\phi_s(\tfrac{N-1}{N}){f_{N-1}^\alpha(\eta_s)}\right]ds\\
  &-\int_0^t  \left[\nabla^+_N\phi_s(0)\xi^\alpha_1(\eta_s)-\nabla^+_N\phi_s(\tfrac{N-1}{N})\xi^\alpha_{N-1}(\eta_s)\right]ds\\
  &-\frac{\tilde \beta N^2}{N^{1+\theta}}\int_0^t  \left[\phi_s(\tfrac 1N){h_1^\alpha(\eta_s)}+\phi_s(\tfrac{N-1}{N}){h_{N-1}^\alpha(\eta_s)}\right]ds,
 \end{aligned}
 \end{equation}
 where
 \begin{equation}\label{eq:martingaleA2}
 \begin{aligned}
  g^\alpha_{x,x+1}(\eta) & = \frac{1}{2}\left[\xi^\alpha_{x+1}(\eta)(\xi^{\alpha+1}_{x}(\eta)-\xi^{\alpha+2}_{x}(\eta))+\xi^{\alpha}_{x}(\eta)(\xi^{\alpha+1}_{x+1}(\eta)-\xi^{\alpha+2}_{x+1}(\eta))\right],\\
  f_1^\alpha(\eta) & = (-r_{\alpha+1}-r_{\alpha+2})\xi^\alpha_1(\eta)+r_{\alpha}\xi^{\alpha+1}_1(\eta)+r_{\alpha}\xi^{\alpha+2}_1(\eta),\\
  f_{N-1}^\alpha(\eta) & =  (-\tilde r_{\alpha+1}-\tilde r_{\alpha+2})\xi^\alpha_{N-1}(\eta)+\tilde r_{\alpha}\xi^{\alpha+1}_{N-1}(\eta)+\tilde r_{\alpha}\xi^{\alpha+2}_{N-1}(\eta),\\
  h_1^\alpha(\eta) & = \frac{1}{2}\left[(r_{\alpha+2}-r_{\alpha+1})\xi^\alpha_1(\eta)-r_{\alpha}\xi^{\alpha+1}_1(\eta)+r_{\alpha}\xi^{\alpha+2}_1(\eta)\right], \\
   h_{N-1}^\alpha(\eta) & = \frac{1}{2}\left[(\tilde r_{\alpha+1}-\tilde r_{\alpha+2})\xi^\alpha_{N-1}(\eta)+\tilde r_{\alpha}\xi^{\alpha+1}_{N-1}(\eta)-\tilde r_{\alpha}\xi^{\alpha+2}_{N-1}(\eta)\right]  . 
   \end{aligned}
 \end{equation}
Above we used the usual notions of discrete Laplacian and discrete derivative as: 
\begin{equation*}
 \begin{aligned}
\nabla^+_N\phi(\tfrac{x}{N})&=N[\phi(\tfrac{x+1}{N})-\phi(\tfrac{x}{N})]\quad \textrm{and} \quad
\Delta_N\phi(\tfrac{x}{N})&=N^2[\phi(\tfrac{x+1}{N})-2\phi(\tfrac{x}{N})+\phi(\tfrac{x-1}{N})].
 \end{aligned}
\end{equation*}
Using the relations in \eqref{relations_of_r_AB...} and the exclusion rule $\xi^A_1+\xi^B_1+\xi^\varnothing_1=1$, we have
\begin{equation}\label{f_1f_N-1,h_1,h_N-1}
\begin{aligned}
f_1^\alpha(\eta) & = r_\alpha-\xi^\alpha_1(\eta), \\
f_{N-1}^\alpha(\eta) & =\tilde r_\alpha-\xi^\alpha_{N-1}(\eta),  \\
h_1^\alpha(\eta) & = \frac{1}{2}\left[(2r_{\alpha+2}-1)\xi^\alpha_1(\eta)-2r_\alpha\xi^{\alpha+1}_1(\eta)+r_\alpha\right], \\
h_{N-1}^\alpha(\eta) & = \frac{1}{2}\left[(1-2\tilde r_{\alpha+2})\xi^\alpha_{N-1}(\eta)+2\tilde r_\alpha\xi^{\alpha+1}_{N-1}(\eta)-\tilde r_\alpha\right] . 
\end{aligned}
\end{equation}
At this point we need to analyse each term in \eqref{eq:martingaleA}. In the next subsection, we explain how to obtain heuristically the weak formulation presented in Definition~\ref{def:Dirichlet}. The rigorous proof will be given in Section  \ref{sec:charac}.

\subsection{The Dirichlet case}\label{sec:Dirichlet}

Consider $\phi\in C_c^{2}([0,1])$. Since $\phi$ has a compact support, the third,  fourth and fifth lines of~\eqref{eq:martingaleA} vanish,  for $N$ sufficiently big.  This means that we can rewrite~\eqref{eq:martingaleA} as
\begin{equation}\label{eq:martingaleA_sec}
 \begin{split}
  &\mcb M_t^{N,\alpha}(\phi)=\langle\pi^{N,\alpha}_t,\phi\rangle -\langle\pi^{N,\alpha}_0,\phi\rangle -\int_0^t ds \,\langle\pi^{N,\alpha}_s,\Delta_N\phi\rangle\\
  &{+}\frac{\beta}{2N}\int_0^t\!\! ds \sum_{x=1}^{N-2}\nabla^+_N\phi(\tfrac xN)  \Big[\!\xi^\alpha_{x+1}(\eta_s)(\xi^{\alpha+1}_{x}(\eta_s)-\xi^{\alpha+2}_{x}(\eta_s))+\xi^\alpha_{x}(\eta_s)(\xi^{\alpha+1}_{x+1}(\eta_s)-\xi^{\alpha+2}_{x+1}(\eta_s))\!\Big]
 \end{split}\end{equation}
plus terms that vanish as $N\to+\infty$.
For $\ell\in\N$ and $x\in\Z$ we introduce the averages on boxes of size $\ell$, one to the right of $x$, another  one to the left of $x$:
\begin{equation}\label{eq:average}
\ora{\xi}^{\alpha,\ell}_x(\eta)=\frac{1}{\ell}\sum_{y=x+1}^{x+\ell}\xi^\alpha_y(\eta),\qquad \ola{\xi}^{\alpha,\ell}_x(\eta)=\frac{1}{\ell}\sum_{y=x-\ell}^{x-1}\xi^\alpha_y(\eta).
\end{equation}
First we observe that by paying a price of order $O(\epsilon)$,  we can restrict the sum in the  last line of \eqref{eq:martingaleA_sec} to $x\in\Lambda_N^{\epsilon}$ where  \begin{equation}\label{eq:rest_set}
\Lambda_N^{\epsilon}:=\{\epsilon N+1, \cdots, N-1-\epsilon N\}.\end{equation} Above and in what follows,  $\epsilon N$ represents $\lfloor\epsilon N\rfloor.$ 
Using repeatedly Lemma \ref{eq:global_RL} we can rewrite the last line of \eqref{eq:martingaleA_sec}  as 
 \begin{equation}\begin{split}
 \label{eq:martingaleA_sec_1}
\frac{\beta}{2N}\int_0^t ds \sum_{x\in\Lambda_N^{\epsilon}}\nabla^+_N\phi(\tfrac xN)\Big[\xi^\alpha_{x+1}&(\eta_s)(\ola\xi^{\alpha+1,\epsilon N}_{x}(\eta_s)-\ola \xi^{\alpha+2, \epsilon N}_{x}(\eta_s))\\&+\xi^\alpha_{x}(\eta_s)(\ora \xi^{\alpha+1,\epsilon N}_{x+1}(\eta_s)-\ora\xi^{\alpha+2, \epsilon N}_{x+1}(\eta_s))\Big]
 \end{split}
 \end{equation}
Before we go on  we explain how we do it. First, we split the  integral on the second line of~\eqref{eq:martingaleA_sec}, with the sum restricted to $\Lambda_N^{\epsilon}$  in several terms, i.e. we write it as 
\begin{equation}\label{eq:martingaleA_third}
 \begin{split}
  &\frac{\beta}{2N}\int_0^t ds \sum_{x\in\Lambda_N^\epsilon}\nabla^+_N\phi(\tfrac xN)  \xi^\alpha_{x+1}(\eta_s)\xi^{\alpha+1}_{x}(\eta_s)-\frac{\beta}{2N}\int_0^t ds \sum_{x\in\Lambda_N^\epsilon}\nabla^+_N\phi(\tfrac xN)\xi^\alpha_{x+1}(\eta_s) \xi^{\alpha+2}_{x}(\eta_s) \\
  +&\frac{\beta}{2N}\int_0^t ds \sum_{x\in\Lambda_N^\epsilon}\nabla^+_N\phi(\tfrac xN)\xi^\alpha_{x}(\eta_s)\xi^{\alpha+1}_{x+1}(\eta_s)- \frac{\beta}{2N}\int_0^t ds \sum_{x\in\Lambda_N^\epsilon}\nabla^+_N\phi(\tfrac xN)\xi^\alpha_{x}(\eta_s)\xi^{\alpha+2}_{x+1}(\eta_s).
 \end{split}\end{equation}

Now in the first integral in last display,  we apply Lemma \ref{eq:global_RL}  with $G_s(\tfrac{x}{N})=\frac{\beta}{2}\nabla^+_N \phi_s(\tfrac xN)$, $\tau_x\psi(\eta)=\xi^{\alpha}_{x+1}(\eta)$ and we make the replacement of $\xi^{\alpha+1}_x$ by $\ola\xi^{\alpha+1, \epsilon N}_x$. 
In all the remaining terms we do a similar replacement. 
Noting that for $u\in[0,1]$ if 
 \begin{equation}\label{iotas}
\ora{\iota_\epsilon}(\tfrac xN)(u)=\frac{1}{\epsilon}\textbf{1}_{\left(\tfrac xN, \tfrac xN+\epsilon\right]}(u)\quad \textrm{and}\quad\ola{\iota_\epsilon}(\tfrac xN)(u)=\frac{1}{\epsilon}\textbf{1}_{\left[ \tfrac xN-\epsilon, \tfrac xN\right)}(u),\end{equation} then, for any $s\in[0,T]$
\begin{equation}\label{eq:ident}
\langle \pi_s^{N,\alpha}, \ora{\iota_\epsilon}(\tfrac xN)\rangle =\frac{1}{\epsilon N}\sum_{z=x+1}^{x+\epsilon N}\xi^\alpha_z(\eta_s)\,,\quad\quad \langle \pi_s^{N,\alpha}, \ola{\iota_\epsilon}(\tfrac xN)\rangle =\frac{1}{\epsilon N}\sum_{z=x-\epsilon N}^{x-1}\xi^\alpha_z(\eta_s),\end{equation}
and we can rewrite \eqref{eq:martingaleA_third} as 
 \begin{equation}\begin{aligned}\label{eq:martingaleA_thi}
\frac{\beta}{2N}\int_0^t ds \sum_{x\in\Lambda_N^{\epsilon }}\nabla^+_N\phi(\tfrac xN)  \Big[\xi^\alpha_{x+1}(\eta_s)&\Big(\langle \pi_s^{N,\alpha+1}, \ola\iota_\epsilon(\tfrac xN )\rangle-\langle \pi_s^{N,\alpha+2}, \ola\iota_\epsilon(\tfrac xN)\rangle\Big)\\&+\xi^\alpha_{x}(\eta_s)\Big(\langle \pi_s^{N,\alpha+1}, \ora \iota_\epsilon(\tfrac xN)\rangle-\langle \pi_s^{N,\alpha+2}, \ora\iota_\epsilon(\tfrac xN)\rangle\Big)\Big].
 \end{aligned}\end{equation}
By paying a price of order $O(\epsilon)$ in order to convert the sum in $\Lambda_N^\epsilon$ to the whole sum, we can  rewrite the last term as
{\begin{equation}\begin{aligned}%\label{eq:martingaleA_thi}
\frac{\beta}{2}\int_0^t \Big  \langle \pi_s^{N,\alpha}, \nabla^+_N\phi(\cdot) \Big(\langle \pi_s^{N,\alpha+1}, &\ola{\iota_\epsilon}(\cdot)\rangle-\langle \pi_s^{N,\alpha+2},\ola{\iota_\epsilon}(\cdot)\rangle\Big)\Big\rangle\\&+\Big\langle \pi_s^{N,\alpha}, \nabla^+_N\phi(\cdot) \Big(\langle \pi_s^{N,\alpha+1}, \ora{\iota_\epsilon}(\cdot)\rangle-\langle \pi_s^{N,\alpha+2},\ora{\iota_\epsilon}(\cdot)\rangle\Big) \Big\rangle ds.
\end{aligned}\end{equation}}
Putting last identity back to \eqref{eq:martingaleA_sec} we have that 
{\begin{equation*}
 \begin{split}\label{eq:martingaleA_sec_last}
  \mcb M_t^{N,\alpha}(\phi)=\langle\pi^{N,\alpha}_t,\phi\rangle -&\langle\pi^{N,\alpha}_0,\phi\rangle -\int_0^t ds\langle\pi^{N,\alpha}_s, \,\Delta_N\phi\rangle\\
&+\frac{\beta}{2}\int_0^t  \Big\langle \pi_s^{N,\alpha}, \nabla^+_N\phi(\cdot) \Big(\langle \pi_s^{N,\alpha+1},\ola{\iota_\epsilon}(\cdot)\rangle-\langle \pi_s^{N,\alpha+2},\ola{\iota_\epsilon}(\cdot)\rangle\Big)\Big\rangle\\&\quad \quad \quad \quad \quad +\Big\langle \pi_s^{N,\alpha}, \nabla^+_N\phi(\cdot) \Big(\langle \pi_s^{N,\alpha+1}, \ora{\iota_\epsilon}(\cdot)\rangle-\langle \pi_s^{N,\alpha+2},\ora{\iota_\epsilon}(\cdot)\rangle\Big) \Big\rangle ds,
 \end{split}\end{equation*}}
 plus terms that vanish as $N\to+\infty$ and $\epsilon\to 0$.
 From the computations of Section  \ref{sec:tightness} we will see that the martingale vanishes in $L^2(\mathbb P_{\mu_N})$, as $N\to+\infty$. Moreover,  from the results of Section~\ref{sec:tightness}, we can also assume the weak convergence of $\pi^{N,\alpha}_t(\eta,du)$ to $\pi^\alpha_t(du)=\rho^\alpha_t(u)du$, for each $\alpha\in\{A,B,\emptyset\}$. On the other hand, since $\rho^\alpha_t(u)\in[0,1]$ for all $t\in[0,T]$ and $u\in[0,1]$, from Lebesgue's differentiation theorem it holds
\begin{equation}\label{eq:passage_rho}
\lim_{\epsilon\to 0}\Big|\rho^\alpha_s(u)-\tfrac{1}{\epsilon}\int_{u}^{u+\epsilon} \rho^\alpha_s(v)dv\Big| =0\quad \textrm{and }\quad \lim_{\epsilon\to 0}\Big|\rho^\alpha_s(u)-\tfrac{1}{\epsilon}\int_{u-\epsilon}^{u} \rho^\alpha_s(v)dv\Big| =0
\end{equation}
for almost every $u\in[0,1]$. 
 As a consequence, \eqref{eq:martingaleA_sec_last} converges, as $N\to+\infty$, to 
  \begin{align}\label{eq:martingaleA_limit_dir}
0&=\langle\rho^{\alpha}_t,\phi_t\rangle -\langle\rho^\alpha_0,\phi_0\rangle -\int_0^t ds\langle\rho^{\alpha}_s, \,(\partial_s+\Delta)\phi_s\rangle+\beta\int_0^t ds  \langle \rho_s^{\alpha}  ( \rho_s^{\alpha+1} -\rho_s^{\alpha+2}), \nabla\phi_s\rangle.
 \end{align}

We note that above we gave an heuristic argument to  obtain the integral formulation as given in \eqref{eq:weak_dir}. We note that  the proof of condition $\mathbf{D2}.$ is explained in Section \ref{section_properties}. 
  
 In the next subsection, we explain a similar argument as given above, to obtain the weak formulation of \eqref{eq:Robin_4}.

\subsection{The Robin cases}\label{sec:RobinNeumann}

Now we do not assume any condition on the test function, i.e. we consider $\phi\in C^{1,2}([0,T]\times [0,1])$. In this case we need to analyse carefully each line in \eqref{eq:martingaleA}.  The analysis of the first and second lines can be done exactly as in the Dirichlet case of the previous subsection. Now let us analyse the remaining terms.  The fourth line does not depend on the value of the parameters $\theta$ nor $\delta$. From Lemma \ref{lem:local_RL} we can rewrite that term as 
 \begin{align}\label{eq:third_term}
 -\int_0^t ds \left[\nabla^+_N\phi_s(0)\ora\xi^{\alpha,\epsilon N}_1(\eta_s)-\nabla^+_N\phi_s(\tfrac{N-1}{N})\ola\xi^{\alpha,\epsilon N}_{N-1}(\eta_s)\right],
 \end{align}
plus terms that vanish as $N\to+\infty$ and $\epsilon\to0$. As in the previous case, we could now try to use 
\eqref{eq:passage_rho} but as we have seen above, it is only true for almost every $u\in[0,1]$ and here we need this to be true for $u=0$ and $u=1$. The way to conclude that result,  is to use   Lemma \ref{lem:boundary_terms}, which is similar to Lemma 6.2 in \cite{BDGN}. 
From those results we conclude that the last display can be written as 
\begin{align}\label{eq:third_term_1}
 -\int_0^t  \left[\nabla\phi_s(0)\rho^\alpha_s(0)-\nabla\phi_s(1)\rho^{\alpha}_{s}(1)\right]ds
 \end{align}
plus terms that vanish as $N\to+\infty.$
Now we analyse the third line of \eqref{eq:martingaleA}. If $\delta>1$, it is simple to conclude that the whole term vanishes as $N\to+\infty$. 
Let us now see the case $\delta=1$. Then it rewrites as 
\begin{align*}
-\int_0^t \big[\phi_s(\tfrac{1}{N})(r_\alpha-\xi^\alpha_1(\eta_s))+\phi_s(\tfrac{N-1}{N})(\tilde r_\alpha-\xi^\alpha_{N-1}(\eta_s))\big]ds.
 \end{align*}
 Applying Lemma \ref{lem:local_RL} we can rewrite it as 
 \begin{align*}
-\int_0^t ds\left[\phi_s(\tfrac{1}{N})(r_\alpha-\ora\xi^{\alpha,\epsilon N}_{1}(\eta_s))+\phi_s(\tfrac{N-1}{N})(\tilde r_\alpha-\ola\xi^{\alpha,\epsilon N}_{N-1}(\eta_s))\right]
 \end{align*}
 plus terms that vanish as $N\to+\infty$ and $\epsilon\to0$. As before,  
 using the same  arguments as the ones we used in the analysis of the  previous terms, we can rewrite last display as 
  \begin{align*}
-\int_0^t \left[\phi_s(0)(r_\alpha-\rho^\alpha_s(0)+\phi_s(1)(\tilde r_\alpha-\rho^{\alpha}_s(1)\right]ds.
 \end{align*}
 Finally we treat the last term in \eqref{eq:martingaleA}. For $\theta>1$ it is simple to see that term vanishes as $N\to+\infty$. For $\theta=1$ and repeating the same arguments as before, that term can be rewritten as 
 \begin{align*}
 &-\frac{\tilde \beta}{2}\int_0^t \phi_s(0)\left[(2r_{\alpha+2}-1)\rho^\alpha_s(0)-2r_\alpha\rho^{\alpha+1}_s(0)+r_\alpha\right] ds\\
  &-\frac{\tilde \beta}{2}\int_0^t  \phi_s(1)\left[(1-2\tilde r_{\alpha+2})\rho^\alpha_{s}(1)+2\tilde r_\alpha\rho_s^{\alpha+1}(1)-\tilde r_\alpha\right] ds.
 \end{align*}
Putting together last results we deduce that  the limit of \eqref{eq:martingaleA} gives
for  $\theta \geq \delta>1$
 \begin{equation}\begin{split}\label{eq:martingaleA_limit_rob_1}
0=\langle\rho^{\alpha}_t,\phi_t\rangle -\langle\rho_0,\phi_0\rangle -\int_0^t ds\langle\rho^{\alpha}_s, \,(\partial_s+\Delta)\phi_s\rangle&+\beta\int_0^t ds  \langle \rho_s^{\alpha}  ( \rho_s^{\alpha+1} -\rho_s^{\emptyset}), \nabla\phi_s\rangle\\& -\int_0^t  [\nabla\phi_s(0)\rho^\alpha_s(0)-\nabla\phi_s(1)\rho^{\alpha}_{1}]ds,
 \end{split}\end{equation}
 which coincides with \eqref{eq:weak_rob_4}
 for the choice  $\kappa_1=\kappa_2=0$.

For  $\theta >\delta=1$, the limit of \eqref{eq:martingaleA} gives
 \begin{equation}\begin{split}\label{eq:martingaleA_limit_rob_3}
0=\langle\rho^{\alpha}_t,\phi_t\rangle -\langle\rho_0^\alpha,\phi_0\rangle &-\int_0^t ds\langle\rho^{\alpha}_s, \,(\partial_s+\Delta)\phi_s\rangle+\beta\int_0^t ds  \langle \rho_s^{\alpha}  ( \rho_s^{\alpha+1} -\rho_s^{\alpha+2}), \nabla\phi_s\rangle\\& -\int_0^t  [\nabla\phi_s(0)\rho^\alpha_s(0)-\nabla\phi_s(1)\rho^{\alpha}_{1}]ds\\ &-\frac{\tilde \beta}{2}\int_0^t  [\phi_s(0){\left[(2r_{\alpha+2}-1)\rho^\alpha_s(0)-2r_\alpha\rho^{\alpha+1}_s(0)+r_\alpha\right] }ds\\
  &-\frac{\tilde \beta}{2}\int_0^t  \phi_s(1){\left[(1-2\tilde r_{\alpha+2})\rho^\alpha_{s}(1)+2\tilde r_\alpha\rho_s^{\alpha+1}(1)-\tilde r_\alpha\right]} ds
 \end{split}\end{equation}
which coincides with \eqref{eq:weak_rob_4} for the choice $\kappa_1=0$ and $\kappa_2=1$.

Finally,  for $\theta=1=\delta$,   the limit of \eqref{eq:martingaleA} gives
  \begin{equation}\begin{split}\label{eq:martingaleA_limit_rob_3}
0=\langle\rho^{\alpha}_t,\phi_t\rangle -\langle\rho_0^\alpha,\phi_0\rangle &-\int_0^t ds\langle\rho^{\alpha}_s, \,(\partial_s+\Delta)\phi_s\rangle+\beta\int_0^t ds  \langle \rho_s^{\alpha}  ( \rho_s^{\alpha+1} -\rho_s^{\alpha+2}), \nabla\phi_s\rangle\\& -\int_0^t  [\nabla\phi_s(0)\rho^\alpha_s(0)-\nabla\phi_s(1)\rho^{\alpha}_{1}]ds\\ &-\frac{\tilde \beta}{2}\int_0^t  [\phi_s(0){\left[(2r_{\alpha+2}-1)\rho^\alpha_s(0)-2r_\alpha\rho^{\alpha+1}_s(0)+r_\alpha\right] }ds\\
  &-\frac{\tilde \beta}{2}\int_0^t  \phi_s(1){\left[(1-2\tilde r_{\alpha+2})\rho^\alpha_{s}(1)+2\tilde r_\alpha\rho_s^{\alpha+1}(1)-\tilde r_\alpha\right]} ds\\
  &-\int_0^t [\phi_s(0)(r_\alpha-\rho^\alpha_s(0)+\phi_s(1)(\tilde r_\alpha-\rho^{\alpha}_s(1)]ds,
 \end{split}\end{equation}
 which coincides with \eqref{eq:weak_rob_4}
for the choice $\kappa_1=\tilde \beta/2$ and $\kappa_2=1$.

\section{Tightness}\label{sec:tightness}

For $\alpha\in\{A,B,\emptyset\}$ denote by $\mathbb{Q}^{N,\alpha}=\mathbb P_{\mu_N}(\pi^{N,\alpha})^{-1}$ the probability measure  on ${\cal D}([0,T],\mcb M)$ induced by the Markov process $\{\pi^{N,\alpha}_t:t\geq0\}$ and by the initial measure $\mu_N$. In this section we will prove that the sequence $\{\mathbb{Q}^{N,\alpha}\}_{N\geq1}$ is tight and this will ensure that every subsequence of $\{\mathbb{Q}^{N,\alpha}\}_{N\geq1}$ has a further subsequence which is weakly convergent.
\begin{prop} \label{prop:tight}For $\alpha \in\{A,B,\emptyset\}$, the sequence of measures $\{\mathbb{Q}^{N,\alpha}\}_{N\geq1}$ is tight.
\end{prop}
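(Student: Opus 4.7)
The plan is to apply Aldous' tightness criterion to the real-valued projections $\{\langle \pi^{N,\alpha}_\cdot,\phi\rangle\}_{N\geq 1}$ for every $\phi$ in a point-separating subset of $C([0,1])$. Since $\mcb M$ is compact in the weak topology, fixed-time tightness of $\pi^{N,\alpha}_t$ is automatic, so it suffices to show that for every $\varepsilon>0$,
\[
\lim_{\gamma\to 0}\limsup_{N\to\infty}\sup_{\tau\le T,\,\theta\le\gamma}\mathbb P_{\mu_N}\!\left(\bigl|\langle \pi^{N,\alpha}_{\tau+\theta},\phi\rangle-\langle \pi^{N,\alpha}_\tau,\phi\rangle\bigr|>\varepsilon\right)=0,
\]
where the supremum is taken over stopping times $\tau$ bounded by $T$ and deterministic $\theta\in[0,\gamma]$. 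I would take $\phi\in C^2([0,1])$ when $\delta\geq 1$ and restrict to $\phi\in C_c^2((0,1))$ in the Dirichlet regime $\delta<1$.

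Using the Dynkin decomposition \eqref{dynkins_formula} together with the explicit expansion \eqref{eq:martingaleA}, the increment splits into a drift integral plus a martingale increment, and I would control each via Markov's inequality. For the drift, the discrete Laplacian term is bounded by $\|\Delta\phi\|_\infty$ and the bulk weakly asymmetric term by $\beta\|\nabla\phi\|_\infty$, since $g^\alpha_{x,x+1}$ is bounded by $1$ and the sum in \eqref{eq:martingaleA} is normalized by $1/N$. The boundary contributions carry prefactors $N^{1-\delta}$ and $N^{1-\theta}$: these are uniformly bounded whenever $\delta,\theta\geq 1$, and in the Dirichlet case with $\phi\in C_c^2((0,1))$ they vanish identically for $N$ large because $\phi(\tfrac{1}{N})=\phi(\tfrac{N-1}{N})=\nabla^+_N\phi(0)=\nabla^+_N\phi(\tfrac{N-1}{N})=0$. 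Hence the drift integral over $[\tau,\tau+\theta]$ is bounded almost surely by $C(\phi)\gamma$, uniformly in $N$.

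For the martingale, Doob's $L^2$ inequality and optional sampling reduce the task to estimating $\mathbb E_{\mu_N}[\langle\mcb M^{N,\alpha}(\phi)\rangle_{\tau+\theta}-\langle\mcb M^{N,\alpha}(\phi)\rangle_\tau]$. A direct computation of $N^2\mcb L_N\langle\pi^{N,\alpha}_s,\phi\rangle^2-2\langle\pi^{N,\alpha}_s,\phi\rangle\,N^2\mcb L_N\langle\pi^{N,\alpha}_s,\phi\rangle$ shows that each bulk exchange at sites $x,x+1$ contributes $(\phi(\tfrac{x+1}{N})-\phi(\tfrac{x}{N}))^2=O(\|\nabla\phi\|_\infty^2/N^2)$ at rate $O(N^2)$, which summed over the $O(N)$ sites yields a bulk quadratic variation rate of order $\|\nabla\phi\|_\infty^2/N$. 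Each boundary exchange contributes at most $O(\phi(\tfrac{1}{N})^2/N^2+\phi(\tfrac{N-1}{N})^2/N^2)$ at rates $O(N^{2-\delta}+N^{2-\theta})$, giving $O(N^{-\delta}+N^{-\theta})$ (and being absent for compactly supported $\phi$). Hence the expected quadratic variation over an interval of length $\gamma$ is $O(\gamma/N)\to 0$, completing Aldous' criterion.

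The main obstacle is the boundary drift in the fast-reservoir (Dirichlet) regime where the prefactor $N^{1-\delta}$ diverges; I sidestep this at the tightness level by working with test functions compactly supported in $(0,1)$. Since $\mcb M$ is compact and such test functions still generate the relevant topology for the absolutely continuous limit measures, tightness of the measure-valued processes follows, while the Dirichlet boundary values are recovered independently in the characterization step of Section~\ref{sec:charac}.
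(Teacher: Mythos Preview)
Your approach is essentially the paper's: both restrict to $\phi\in C_c^2((0,1))$, use the Dynkin decomposition \eqref{dynkins_formula}, bound the drift integrand uniformly by $O(\|\Delta\phi\|_\infty+\beta\|\nabla\phi\|_\infty)$ (the compact support killing all boundary terms for large $N$), and show the martingale quadratic variation is $O(1/N)$ via the explicit formula \eqref{eq:tight2}. The only cosmetic differences are that you use Aldous' stopping-time criterion while the paper uses the deterministic oscillation condition \eqref{tightness1}, and that you case-split on $\delta$ whereas the paper works with $C_c^2$ uniformly in all regimes.

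One point needs correction: your justification that compactly supported test functions suffice because they ``generate the relevant topology for the absolutely continuous limit measures'' is circular---absolute continuity of limit points is established only \emph{after} tightness, in Section~\ref{sec:charac}. The correct extension (which the paper defers to \cite{BMNS17}) is a direct approximation: for arbitrary $\phi\in C([0,1])$ pick $\phi_n\in C_c^2((0,1))$ with $\|\phi-\phi_n\|_\infty\to 0$ and use $|\langle\pi^{N,\alpha}_t,\phi-\phi_n\rangle|\le\|\phi-\phi_n\|_\infty$, valid uniformly in $N$ and $t$ since the total mass of $\pi^{N,\alpha}_t$ is at most $1$.
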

\begin{proof}By Proposition~1.7 of Chapter 4 of \cite{kipnisLandim} it is enough to show, for every $\phi\in C^1([0,1])$, that the sequence of measures associated with the real-valued process $\{\langle\pi^{N,\alpha}_t,\phi\rangle\}_{N\geq1}$ is tight, and for that it is enough to show that, for every $\phi\in C([0,1])$ and $\varepsilon>0$,
\begin{equation}\label{tightness1}
\lim_{\gamma\to0}\limsup_{N\to\infty}\mathbb P_{\mu_N}\left[\sup_{|t-s|\leq\gamma}|\langle\pi^{N,\alpha}_t,\phi\rangle-\langle\pi^{N,\alpha}_s,\phi\rangle|>\varepsilon\right]=0.
\end{equation}
We begin by showing that \eqref{tightness1} holds for every $\phi\in C^2_c([0,1])$, and we note that the extension to $\phi\in C([0,1])$ is a simple argument that can be seen, for example, in Section 4 of \cite{BMNS17}. Considering the Dynkin's martingale \eqref{dynkins_formula}, in order to conclude \eqref{tightness1} it is enough to prove that
\begin{equation}\label{tightness3}
\lim_{\gamma\to0}\limsup_{N\to\infty}\mathbb E_{\mu_N}\left[\sup_{|t-s|\leq\gamma}\left|\int_s^t N^2 \mcb{L}_N\langle\pi^{N,\alpha}_r,\phi\rangle dr \right|\right]=0
\end{equation}
and
\begin{equation}\label{tightness4}
\lim_{\gamma\to0}\limsup_{N\to\infty}\mathbb E_{\mu_N}\left[\sup_{|t-s|\leq\gamma}|\mcb M^{N,\alpha}_t(\phi)-\mcb M^{N,\alpha}_s(\phi)|\right]=0.
\end{equation}
Since we are assuming that $\phi$ has compact support in $(0,1)$, there exist $N_0$ such that, for all $N\geq N_0$, $\phi(0)=\phi(\tfrac{1}{N})=\phi(\tfrac{N-1}{N})=\phi(1)=0$, and so in \eqref{tightness3} we can replace the full generator $\mcb{L}_N$ by the bulk generator $\mcb{L}^B_N$. Recall the function $g^\alpha_{x,x+1}$ defined in \eqref{eq:martingaleA2}. Since $\xi^\alpha_x(\eta)\in\{0,1\}$ and $|g^\alpha_{x,x+1}(\eta)|\leq1$, by the mean value theorem, for any $r\in[0,T]$, it holds
\begin{align*}
\left|N^2 \mcb{L}_N\langle\pi^{N,\alpha}_r,\phi\rangle\right| &=\left|\frac{1}{N}\sum_{x=1}^{N-1}\Delta_N\phi(\tfrac{x}{N})\xi^\alpha_x(\eta_r)-\frac{\beta}{N}\sum_{x=1}^{N-2}\nabla_N^+\phi(\tfrac{x}{N})g^\alpha_{x,x+1}(\eta_r)\right| \\
& \leq 2\|\phi''\|_\infty+\beta \|\phi'\|_\infty.
\end{align*}
Therefore, there exists a constant $C$ such that $\left|N^2 \mcb{L}_N\langle\pi^{N,\alpha}_r,\phi\rangle\right|<C$ for all $N\geq N_0$, and this implies \eqref{tightness3}. Now let us verify \eqref{tightness4}. By the triangular, Jensen and Doob's inequalities
\begin{equation}\label{tightness5}
\mathbb E_{\mu_N}\left[\sup_{|t-s|\leq\gamma}|\mcb M^{N,\alpha}_t(\phi)-\mcb M^{N,\alpha}_s(\phi)|\right]\leq 4 \mathbb E_{\mu_N}\left[(\mcb M^{N,\alpha}_T(\phi))^2\right]^{1/2}.
\end{equation}
To estimate \eqref{tightness5}, observe that
\begin{equation*}
\mcb M^{N,\alpha}_t(\phi)^2-\int_0^tN^2\mcb{L}_N\langle\pi^{N,\alpha}_s,\phi\rangle^2-2N^2\langle\pi^{N,\alpha}_s,\phi\rangle\mcb{L}_N\langle\pi^{N,\alpha}_s,\phi\rangle ds
\end{equation*}
is a mean zero martingale and a  standard computation shows that the rightmost term in last display is, for all $N\geq N_0$, equal to
\begin{equation}\label{eq:tight2}
\begin{aligned}
\int_{0}^t\frac{1}{N^2}\sum_{x=1}^{N-2}c_{x,x+1}(\eta_s)(\nabla_N\phi(\tfrac{x}{N}))^2[\xi^\alpha_{x+1}(\eta_s)-\xi^\alpha_x(\eta_s)]^2ds.
\end{aligned}
\end{equation}
From this  and the fact that the rates $c_{x,x+1}$ are bounded, we get
\begin{equation*}
\mathbb E_{\mu_N}\left[(\mcb M^{N,\alpha}_T(\phi))^2\right]\leq \frac{C}{N}\|(\phi')^2\|_\infty
\end{equation*}
 which vanishes as $N\to\infty$. This concludes the proof. 
\end{proof}

\section{Characterization of limit points} \label{sec:charac}
Let $\pi^N=(\pi^{N,A},\pi^{N,B},\pi^{N,\emptyset}):\Omega^N\to\mcb M^3$, and consider $\mathbb Q^N =\mathbb P_{\mu_N}(\pi^N)^{-1}$, the probability measure on $\mcb D ([0,T],\mcb M^3)$ induced by the process $\{\pi_t^N: t\geq0\}$ and by the initial measure $\mu_N$. In this section we prove that the limit points $\mathbb Q$ of the sequence $\{\mathbb Q^{N}\}_{N\geq1}$ are concentrated on trajectories, whose marginals are absolutely continuous with respect to the Lebesgue measure and whose densities $\rho^\alpha_t(u)$, $\alpha\in\{A,B,\emptyset\}$, are weak solutions of the corresponding hydrodynamic equation.

Recall the definitions of $F_{\rm Dir}$ and $F_{\rm Rob}$ in \eqref{eq:weak_dir} and \eqref{eq:weak_rob_4}. Let $C_{\rm Dir}=C_c^{1,2}([0,T]\times[0,1])$ and $C_{\rm Rob}=C^{1,2}([0,T]\times[0,1])$.
\begin{prop}Let $\mathbb Q$ be any limit point of the sequence $\{\mathbb Q^{N}\}_{N\geq1}$. Then, for $\alpha\in\{A,B,\emptyset\}$ 
	\begin{equation*}
	\mathbb Q^{N}(\pi_\cdot\in\mcb{D}([0,T],\mcb M^3):\, F^\alpha_{\Gamma}(t,\phi,\rho,\mathfrak{g})=0,\,\forall t\in[0,T],\,\forall\phi\in C_\Gamma)=1,
	\end{equation*}
	where $\Gamma={\rm Dir}$ or $\Gamma={\rm Rob}$, depending on the values of $\delta$ and $\theta$, as in the cases a) and b) in Theorem~\ref{th:hyd_ssep}.
\end{prop}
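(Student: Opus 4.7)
The plan is to fix an arbitrary limit point $\mathbb{Q}$ of the sequence $\{\mathbb{Q}^N\}_{N\ge 1}$ (which exists by Proposition \ref{prop:tight}), and for each $\alpha\in\{A,B,\emptyset\}$, $t\in[0,T]$, and $\phi\in C_\Gamma$, show that $F^\alpha_\Gamma(t,\phi,\rho,\mathfrak{g})=0$ holds $\mathbb{Q}$-almost surely on trajectories. The starting point is the Dynkin martingale decomposition \eqref{eq:martingaleA}. The quadratic-variation calculation \eqref{eq:tight2} already carried out in the tightness proof gives $\mathbb{E}_{\mu_N}[(\mcb M^{N,\alpha}_T(\phi))^2]=O(1/N)$, so by Doob's inequality the martingale vanishes in $L^2(\mathbb{P}_{\mu_N})$. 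Hence it suffices to show that the remaining five lines of \eqref{eq:martingaleA}, viewed as functionals of the empirical trajectory, converge in $\mathbb{P}_{\mu_N}$-probability, as $N\to\infty$ and then $\epsilon\to 0$, to the corresponding terms in \eqref{eq:weak_dir} or \eqref{eq:weak_rob_4}, and then to invoke Portmanteau together with a standard $\epsilon$-approximate continuity of these functionals in the Skorokhod topology on $\mcb{D}([0,T],\mcb M^3)$.

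The termwise analysis follows the blueprint of Section \ref{sec:heuristics}. The linear bulk piece is handled by substituting $\Delta_N\phi$ with $\Delta\phi$, at a cost $O(\|\phi^{(3)}\|_\infty/N)$, and passing to the limit by weak convergence. For the quadratic bulk term I would apply the global replacement Lemma \ref{eq:global_RL} to exchange products $\xi^\alpha_{x+1}\xi^{\alpha+1}_x$ for products $\xi^\alpha_{x+1}\ola{\xi}^{\alpha+1,\epsilon N}_x$; then, after reading the block averages as pairings against $\iota_\epsilon$ via \eqref{eq:ident}, Lebesgue differentiation \eqref{eq:passage_rho} and weak convergence of $\pi^{N,\alpha}$ produce, in the double limit, the nonlinear integrand $\beta\langle\rho^\alpha_s(\rho^{\alpha+1}_s-\rho^{\alpha+2}_s),\nabla\phi_s\rangle$. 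In the Dirichlet regime $\delta<1\le\theta$, the compact support of $\phi$ annihilates the third, fourth, and fifth lines of \eqref{eq:martingaleA} for $N$ large, so $F^\alpha_{\rm Dir}=0$ follows at once; the Dirichlet trace condition $\mathbf{D2}$ is then verified separately in Section \ref{section_properties}.

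In the Robin regimes, the boundary lines are treated according to their scaling. The fourth line is $O(1)$ in $\delta,\theta$ and always contributes: Lemma \ref{lem:local_RL} replaces $\xi^\alpha_1$ by $\ora{\xi}^{\alpha,\epsilon N}_1$ (and analogously at $N-1$), after which Lemma \ref{lem:boundary_terms} identifies the mesoscopic average with the trace $\rho^\alpha_s(0)$ (resp.\ $\rho^\alpha_s(1)$), yielding exactly \eqref{eq:third_term_1}. The third line carries a prefactor $N^{1-\delta}$, so it vanishes for $\delta>1$ and, when $\delta=1$, produces after the same replacement argument the $\kappa_2=1$ contribution $-\int_0^t[\phi_s(0)(r_\alpha-\rho^\alpha_s(0))+\phi_s(1)(\tilde r_\alpha-\rho^\alpha_s(1))]\,ds$. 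The fifth line carries a prefactor $\tilde\beta N^{1-\theta}/2$, so it vanishes for $\theta>1$ and, when $\theta=1$, yields the $\kappa_1=\tilde\beta/2$ contribution involving $h_1^\alpha, h_{N-1}^\alpha$ as computed in \eqref{f_1f_N-1,h_1,h_N-1}. Assembling these ingredients in each of the three Robin regimes recovers \eqref{eq:weak_rob_4} with the parameters of b1), b2), b3).

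The main obstacle is the identification of the boundary traces $\rho^\alpha_s(0)$ and $\rho^\alpha_s(1)$ from microscopic data: weak convergence of empirical measures does not provide pointwise evaluation at endpoints, yet the Robin functional depends on precisely such traces. This gap is bridged by Lemma \ref{lem:boundary_terms} (modelled on Lemma 6.2 of \cite{BDGN}), whose proof requires the Sobolev regularity $\rho^\alpha\in L^2([0,T];\mcb H^1(0,1))$, i.e.\ property $\mathbf{D1}/\mathbf{R1}$; accordingly an energy estimate for the limit must be developed in tandem. The secondary technical subtlety is the order of limits: I would pass $N\to\infty$ first with $\epsilon$ fixed, keeping every integrand in an $\iota_\epsilon$-smoothed form that is continuous on $\mcb{D}([0,T],\mcb M^3)$, and only afterwards send $\epsilon\to 0$ using Lebesgue differentiation in the bulk and the $\mcb H^1$ boundary estimate at the endpoints. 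Once $F^\alpha_\Gamma=0$ is established, uniqueness of weak solutions from Section \ref{sec:uni_dir} (and, in the Robin case, Appendix \ref{sec:uni_rob}) promotes convergence along subsequences to convergence of the full sequence, completing Theorem \ref{th:hyd_ssep}.
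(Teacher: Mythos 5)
Your proposal is correct and follows essentially the same route as the paper: Dynkin's formula with the martingale controlled via \eqref{eq:tight2} and Doob's inequality, the replacement lemmas (Lemmas \ref{lem:local_RL} and \ref{eq:global_RL}) to rewrite the bulk and boundary terms as $\iota_\epsilon$-smoothed functionals of the empirical measure, Portmanteau with $N\to\infty$ at fixed $\epsilon$, and only then $\epsilon\to 0$ via \eqref{eq:passage_rho} and Lemma \ref{lem:boundary_terms} (supported by the energy estimate), together with the same case analysis in $\delta,\theta$ for the boundary lines. This is exactly the scheme of the paper's proof in Section \ref{sec:charac}, which merely organizes these steps as a bound on $\mathbb Q\big(\sup_{0\leq t\leq T}|F^\alpha_\Gamma(t,\phi,\rho,\mathfrak g)|>\tilde\delta\big)$ through the decomposition \eqref{eq:CLP1}--\eqref{eq:CLP5}.
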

\begin{proof} We start with the case $\Gamma=\textrm{Dir}$, i.e. the Dirichlet boundary conditions, the proof of the Robin case being almost identical.  It is enough to show that, for any $\phi\in C_c^{1,2}([0,T]\times[0,1])$ and for any $\tilde \delta>0$,
\begin{equation*}
  \mathbb Q(\pi_\cdot\in\mcb{D}([0,T],\mcb M^3):\, \sup_{0\leq t\leq T}|F^\alpha_{{\rm Dir}}(t,\phi,\rho,\mathfrak{g})|>\tilde \delta)=0,
\end{equation*}
Using the definitions~\eqref{eq:weak_dir} and~\eqref{eq:average} and by summing and subtracting appropriate terms we can bound the probability from above by the sum of the following probabilities:
\begin{equation}\label{eq:CLP1}
  \begin{aligned}
 & \mathbb Q\Bigg(\sup_{0\leq t\leq T}\Big|\langle \rho^\alpha_t,\phi_t\rangle -\langle \rho_0^\alpha,\phi_0\rangle -\int_0^t \langle \rho^\alpha_s,\partial_s\phi_s\rangle ds -\int_0^t\int_0^{1-\epsilon}\langle \pi^{\alpha}_s,\ora{\iota_\epsilon}(u)\rangle \Delta\phi_s(u) du ds\\
  &+\frac{\beta}{2}\int_0^t \int_\epsilon^{1-\epsilon}\nabla\phi_s(u) \langle \pi_s^{\alpha},\ora{\iota_\epsilon}(u)\rangle \Big(\langle \pi_s^{\alpha+1}, \ola{\iota_\epsilon}(u)\rangle-\langle \pi_s^{\alpha+2},\ola{\iota_\epsilon}(u)\rangle\Big) du ds
 \\&+\frac{\beta}{2}\!\int_0^t\!\!\!\int_\epsilon^{1-\epsilon}\nabla\phi_s(u) \langle \pi_s^{\alpha},\ola{\iota_\epsilon}(u)\rangle \Big(\langle \pi_s^{\alpha+1}, \ora{\iota_\epsilon}(u)\rangle-\langle \pi_s^{\alpha+2},\ora{\iota_\epsilon}(u)\rangle\Big) du ds
  \Big|>\frac{\tilde \delta}{5}\Bigg),
  \end{aligned}
 \end{equation}
\begin{equation}\label{eq:CLP2}
  \mathbb Q\Bigg(\Big|\langle \rho^\alpha_0,\phi_0\rangle -\langle \rho^\alpha_0,\mathfrak{g}^\alpha\rangle\Big|>\frac{\tilde\delta}{5}\Bigg)
 \end{equation}
 \begin{equation}\label{eq:CLP3}
  \begin{aligned}
\!\!\!  \mathbb Q\Bigg(\sup_{0\leq t\leq T}\!\Big|\frac{\beta}{2}\!\int_0^t\!\!\!\int_\epsilon^{1-\epsilon}\nabla\phi_s(u) &\Big(\rho^\alpha_s(u)(\rho^{\alpha+1}_s(u)-\rho^{\alpha+2}_s(u))\\&-\langle \pi_s^{\alpha},\ora{\iota_\epsilon}(u)\rangle \Big(\langle \pi_s^{\alpha+1}, \ola{\iota_\epsilon}(u)\rangle-\langle \pi_s^{\alpha+2},\ola{\iota_\epsilon}(u)\rangle\Big) du ds\Big|\!\!>\!\frac{\tilde\delta}{5}\Bigg),
  \end{aligned}
  \end{equation}
  \begin{equation}\label{eq:CLP4}
  \begin{aligned}
\!\!\!  \mathbb Q\Bigg(\sup_{0\leq t\leq T}\!\Big|\frac{\beta}{2}\!\int_0^t\!\!\!\int_\epsilon^{1-\epsilon}\nabla\phi_s(u) &\Big(\rho^\alpha_s(u)(\rho^{\alpha+1}_s(u)-\rho^{\alpha+2}_s(u))\\&-\langle \pi_s^{\alpha},\ola{\iota_\epsilon}(u)\rangle \Big(\langle \pi_s^{\alpha+1}, \ora{\iota_\epsilon}(u)\rangle-\langle \pi_s^{\alpha+2},\ora{\iota_\epsilon}(u)\rangle\Big) du ds\Big|\!\!>\!\frac{\tilde\delta}{5}\Bigg),
  \end{aligned}
  \end{equation}
 \begin{equation}\label{eq:CLP5}
  \begin{aligned}
 \mathbb Q\Bigg(\sup_{0\leq t\leq T}\Big|&\int_0^t\langle \rho^\alpha_s,\Delta\phi_s\rangle ds -\int_0^t\int_0^{1-\epsilon}\langle\pi^{\alpha}_s, \ora{\iota_\epsilon}(u)\rangle\Delta\phi_s(u) duds\Big|>\frac{\tilde\delta}{5}\Bigg).
  \end{aligned}
 \end{equation}
Above we used the notation $\ora{\iota_\epsilon}(u)(v)=\frac{1}{\varepsilon}\textbf{1}_{(u,u+\varepsilon]}(v)$ and $\ola{\iota_\epsilon}(u)(v)=\frac{1}{\varepsilon}\textbf{1}_{[u-\varepsilon,u)}(v)$.
Since $\mu_N$ satisfies~\eqref{measure_associated_profile},~\eqref{eq:CLP2} is equal to zero. For~\eqref{eq:CLP5} we use~\eqref{eq:passage_rho} and we  conclude that it goes to zero as $\epsilon\to0$.
To show that~\eqref{eq:CLP3} and~\eqref{eq:CLP4} go to zero, we repeat exactly the previous argument together with \eqref{eq:passage_rho}. 

 Now we have to deal with~\eqref{eq:CLP1}. Since the set inside the probability is an open set, by Portmanteau's theorem\footnote{To use Portmanteau's theorem we actually first have to approximate $\ora{\iota_\epsilon}(u)$ and $\ola{\iota_\epsilon}(u)$ by continuous functions, in such a way that the error vanishes as $\epsilon\to0$.} we can bound~\eqref{eq:CLP1} by
 \begin{equation}\label{eq:CLP11}
  \begin{aligned}
&   \liminf_{N\to\infty} \mathbb Q^{N}\Bigg(\sup_{0\leq t\leq T}\Big|\langle \rho^\alpha_t,\phi_t\rangle -\langle \rho^\alpha_0,\phi_0\rangle -\int_0^t \langle \rho^\alpha_s,\partial_s\phi_s\rangle ds -\int_0^t\int_0^{1-\epsilon}\langle \pi^{N,\alpha}_s,\ora{\iota_\epsilon}(u)\rangle \Delta\phi_s (u)du ds\\
  &+\frac{\beta}{2}\int_0^t \int_\epsilon^{1-\epsilon}\nabla\phi_s(u) \langle \pi_s^{\alpha},\ora{\iota_\epsilon}(u)\rangle \Big(\langle \pi_s^{\alpha+1}, \ola{\iota_\epsilon}(u)\rangle-\langle \pi_s^{\alpha+2},\ola{\iota_\epsilon}(u)\rangle\Big) du ds\\&+\frac{\beta}{2}\!\int_0^t\!\!\!\int_\epsilon^{1-\epsilon}\nabla\phi_s(u) \langle \pi_s^{\alpha},\ola{\iota_\epsilon}(u)\rangle \Big(\langle \pi_s^{\alpha+1}, \ora{\iota_\epsilon}(u)\rangle-\langle \pi_s^{\alpha+2},\ora{\iota_\epsilon}(u)\rangle\Big) du ds  \Big|>\frac{\tilde\delta}{5}\Bigg). 
  \end{aligned}
 \end{equation}
We sum and subtract inside the absolute value the term $\int_0^t N^2{\mcb L_N}\langle\pi^{N,\alpha}_s,\phi_s\rangle ds$, and by~\eqref{eq:martingaleA_sec} we can bound the 	last probability from above by
\begin{equation*}
  \begin{aligned}
&  \mathbb P_{\mu_N}\Bigg(\sup_{0\leq t\leq T}|{\mcb M}_t^N(\phi)|>\frac{\delta}{10}\Bigg)+  \mathbb P_{\mu_N}\Bigg(\sup_{0\leq t\leq T}\Bigg|\int_0^t N^2{\mcb L_N}\langle\pi^{N,\alpha}_s,\phi_s\rangle ds\\&-\int_0^t\int_0^{1-\epsilon}\langle \pi^{N,\alpha}_s,\ora{\iota_\epsilon}(u)\rangle \Delta\phi_s(u) du ds \\
 &+\frac{\beta}{2}\int_0^t \int_\epsilon^{1-\epsilon}\nabla\phi_s(u) \langle \pi_s^{N,\alpha},\ora{\iota_\epsilon}(u)\rangle \Big(\langle \pi_s^{N,\alpha+1}, \ola{\iota_\epsilon}(u)\rangle-\langle \pi_s^{N,\alpha+2},\ola{\iota_\epsilon}(u)\rangle\Big) du ds\\&+\frac{\beta}{2}\!\int_0^t\!\!\!\int_\epsilon^{1-\epsilon}\nabla\phi_s(u) \langle \pi_s^{N,\alpha},\ola{\iota_\epsilon}(u)\rangle \Big(\langle \pi_s^{N,\alpha+1}, \ora{\iota_\epsilon}(u)\rangle-\langle \pi_s^{N,\alpha+2},\ora{\iota_\epsilon}(u)\rangle\Big) du ds  \Bigg|>\frac{\delta}{10}\Bigg). 
  \end{aligned}
  \end{equation*}
From Doob's inequality and~\eqref{eq:tight2}, the first probability vanishes as $N\to\infty$. By making explicit the action of the generator, we can bound the second probability by the sum of the following terms:
\begin{equation}\label{eq:CLP8}
 \mathbb P_{\mu_N}\left(\sup_{0\leq t\leq T}\left|\int_0^t\langle \pi^{N,\alpha}_s,\Delta_N\phi_s\rangle ds-\int_0^t\int_\epsilon^{1-\epsilon}\langle \pi^{N,\alpha}_s,\ora{\iota_\epsilon}(u)\rangle \Delta\phi_s(u) du ds\right|>\frac{\delta}{30} \right),
\end{equation}
\begin{equation}\label{eq:CLP9}
  \begin{aligned}
& \mathbb P_{\mu_N}\Bigg(\sup_{0\leq t\leq T}\Big|\frac{\beta}{2}\int_0^t  \Bigg\{\frac{1}{N}\sum_{x=1}^{N-2}\nabla^+_N\phi_s(\tfrac xN) \xi^\alpha_{x+1}(\eta_s)(\xi^{\alpha+1}_{x}(\eta_s)-\xi^{\alpha+2}_{x}(\eta_s))\Bigg\}\\
&
\qquad-  \int_\epsilon^{1-\epsilon}\nabla\phi_s(u) \langle \pi_s^{N,\alpha},\ora{\iota_\epsilon}(u)\rangle \Big(\langle \pi_s^{N,\alpha+1}, \ola{\iota_\epsilon}(u)\rangle-\langle \pi_s^{N,\alpha+2},\ola{\iota_\epsilon}(u)\rangle\Big) du\Bigg\}ds\Big|>\frac{\delta}{30}\Bigg),
\\
& \mathbb P_{\mu_N}\Bigg(\sup_{0\leq t\leq T}\Big|\frac{\beta}{2}\int_0^t \Bigg\{\frac{1}{N}\sum_{x=1}^{N-2}\nabla^+_N\phi_s(\tfrac xN) \xi^\alpha_{x}(\eta_s)(\xi^{\alpha+1}_{x+1}(\eta_s)-\xi^{\alpha+2}_{x+1}(\eta_s))\Bigg\}\\
&
\qquad-  \int_\epsilon^{1-\epsilon}\nabla\phi_s(u) \langle \pi_s^{N,\alpha},\ola{\iota_\epsilon}(u)\rangle \Big(\langle \pi_s^{N,\alpha+1}, \ora{\iota_\epsilon}(u)\rangle-\langle \pi_s^{N,\alpha+2},\ora{\iota_\epsilon}(u)\rangle\Big) du\Bigg\} ds\Big|>\frac{\delta}{30}\Bigg),
  \end{aligned}
  \end{equation}
To treat \eqref{eq:CLP8} it is enough to use  Taylor expansion of $\phi_s$ and we see that  \eqref{eq:CLP8} vanishes as $N\to\infty$ and $\epsilon\to0$.
 For~\eqref{eq:CLP9}, we can consider only the first probability, since the second one is analogous. First, we split the term $\xi^\alpha_{x}(\eta_s)(\xi^{\alpha+1}_{x+1}(\eta_s)-\xi^{\alpha+2}_{x+1}(\eta_s))$
 and then we compare $\frac{1}{N}\sum_{x}\nabla^+_N\phi_s(\tfrac xN)\xi^\alpha_{x}(\eta_s)\xi^{\alpha+i}_{x+1}(\eta_s)$ with $ \int_\epsilon^{1-\epsilon}\nabla\phi_s(u) \langle \pi_s^{N,\alpha},\ola{\iota_\epsilon}(u)\rangle \langle \pi_s^{N,\alpha+i}, \ora{\iota_\epsilon}(u)\rangle du$ for $i=1,2$. 
 In order to do this, we sum and subtract  $\xi^\alpha_{x}(\eta_s)\langle \pi_s^{N,\alpha+i}, \ora{\iota_\epsilon}(\tfrac {x}{N})\rangle+\xi^{\alpha+i}_{x+1}(\eta_s)\langle \pi_s^{N,\alpha}, \ola{\iota_\epsilon}(\tfrac {x}{N})\rangle$.
Finally, we approximate $\nabla\phi$ with its discrete derivative $\nabla^+_N\phi(\tfrac{x}{N})$ and we observe that, since $\langle \pi_s^{N,\alpha},\ola{\iota_\epsilon}(\tfrac xN)\rangle  \langle \pi_s^{N,\alpha+1}, \ora{\iota_\epsilon}(\tfrac xN)\rangle = \ola\xi^{\alpha,\epsilon N}_x(\eta)\ora\xi^{\alpha+1,\epsilon N}_{x+1}(\eta)+\Or(\tfrac{1}{\epsilon N})$, if we reduce the sum to $\Lambda^\epsilon_N$, and we apply Lemma~\ref{lem:local_RL} to $\xi^\alpha_x(\eta)$ and $\xi^{\alpha+i}_{x+1}(\eta)$, we prove that~\eqref{eq:CLP9} goes to zero as $N\to\infty$ and $\epsilon\to0$. 

For $\Gamma={\rm Rob}$, by~\eqref{eq:weak_rob_4} other boundary terms will appear in~\eqref{eq:CLP1} and in addition to~\eqref{eq:CLP2}-~\eqref{eq:CLP5}. We prove that the probability of one of these terms goes to zero as $N\to\infty$ and $\epsilon\to0$. The rest is proved analogously. Let us consider, for example, $\int_0^t[\nabla\phi_s(0)\rho^\alpha_s(0)-\nabla\phi_s(1)\rho^\alpha_s(1)]ds$. We sum and subtract 
\begin{equation*}
 \int_0^t[\nabla\phi_s(0)\langle\pi^\alpha_s,\ora{\iota_\epsilon}(0)\rangle-\nabla\phi_s(1)\langle\pi^\alpha_s,\ola{\iota_\epsilon}(1)\rangle]ds
\end{equation*}
in $F^\alpha_{\rm Rob}(t,\phi,\rho,\mathfrak{g})$.
 As a consequence, we have to add in the absolute value of~\eqref{eq:CLP1}
 \begin{equation}\label{eq:CLP10}
  \kappa_1\int_0^t[\nabla\phi_s(0)(\rho^\alpha_s(0)-\langle\pi^\alpha_s,\ora{\iota_\epsilon}(0)\rangle)-\nabla\phi_s(1)(\rho^\alpha_s(1)-\langle\pi^\alpha_s,\ola{\iota_\epsilon}(1)\rangle)]ds
 \end{equation} and by~\eqref{eq:passage_rho} the contribution from these terms, vanish as $\epsilon\to0$. While~\eqref{eq:CLP10} will results into extra terms in~\eqref{eq:CLP11} and produce the terms
\begin{equation*}
 \begin{aligned}
  &\Pb_{\mu_N}\Bigg(\sup_{0\leq t\leq T}\Bigg|\kappa_1\int_0^t [\nabla_N^+\phi_s(\tfrac{1}{N})\xi^{\alpha}_1(\eta_s)-\nabla\phi_s(0)\langle\pi^{\alpha}_s,\ora{\iota_\epsilon}(0)\rangle] ds\Bigg|>\tilde\varepsilon\Bigg)\\
  &\Pb_{\mu_N}\Bigg(\sup_{0\leq t\leq T}\Bigg|\kappa_1\int_0^t [\nabla_N^+\phi_s(\tfrac{N-1}{N})\xi^{\alpha}(\eta_s)-\nabla\phi_s(1)\langle\pi^{\alpha}_s,\ora{\iota_\epsilon}(1)\rangle] ds\Bigg|>\tilde\varepsilon\Bigg).
 \end{aligned}
\end{equation*}
After approximating the continuous gradients with the discrete ones, these terms vanish as  $N\to\infty$ and $\epsilon\to0$ as a consequence of Lemma~\ref{lem:local_RL} and~\eqref{eq:passage_rho}.
\end{proof}

\section{Replacement lemmas and energy estimate}\label{sec:RL}

\subsection{Entropy bounds}\label{sec:entropy}

For a profile $\rho=(\rho^A,\rho^B,\rho^\emptyset):[0,1]\to[0,1]^3$ with $\rho^A+\rho^B+\rho^\emptyset=1$ let $\nu^N_{\rho(\cdot)}$ be the product measure on $\Omega_N$ whose marginals are given by
 \begin{equation}
 \label{eq:prod_measure}
 \nu^N_{\rho(\cdot)}(\eta(x)=\alpha)=\rho^\alpha(\tfrac xN), \quad \alpha\in\{A,B,\emptyset\}.
 \end{equation}
The first estimate we need is an upper bound on the relative entropy $H(\mu_N|\nu^N_{\rho(\cdot)})$ of the measure $\mu_N$ with respect to $\nu^N_{\rho(\cdot)}$.

\begin{lem}\label{lem:entropy} Let $\rho=(\rho^A,\rho^B,\rho^\emptyset):[0,1]\to[0,1]^3$ be a profile for which there exists some $r_0>0$ such that $\rho^\alpha(u)\geq r_0$ for each $u\in[0,1]$ and $\alpha\in\{A,B,\emptyset\}$. Then, there exists a constant $C_0$ such that $H(\mu_N|\nu^N_{\rho(\cdot)})\leq C_0N$ for every measure $\mu_N$ on $\Omega_N$.
\end{lem}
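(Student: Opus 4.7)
The plan is to exploit the explicit product structure of $\nu^N_{\rho(\cdot)}$ together with the uniform lower bound $r_0$ on the density components to directly control the minimum mass that $\nu^N_{\rho(\cdot)}$ can assign to a single configuration, and then apply the elementary discrete‑entropy inequality. This is a purely soft argument---no dynamics, no replacement lemma machinery, no use of the concrete form of $\mu_N$---so I do not expect any real obstacle.

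First I would observe that, by the product formula \eqref{eq:prod_measure}, for every $\eta\in\Omega_N$
\[
\nu^N_{\rho(\cdot)}(\eta) \;=\; \prod_{x=1}^{N-1}\rho^{\eta(x)}(x/N) \;\geq\; r_0^{\,N-1},
\]
since each of the $N-1$ factors lies in $\{\rho^A(x/N),\rho^B(x/N),\rho^\emptyset(x/N)\}$ and is therefore bounded below by $r_0$ by hypothesis. Consequently
\[
-\log\nu^N_{\rho(\cdot)}(\eta)\;\leq\;(N-1)\log(1/r_0)\qquad\text{for every }\eta\in\Omega_N.
\]

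Second, since $\Omega_N$ is finite and $\mu_N$ is a probability measure, each $\mu_N(\eta)\in[0,1]$, so $\mu_N(\eta)\log\mu_N(\eta)\leq 0$ pointwise. Hence, writing out the definition,
\[
H(\mu_N\mid\nu^N_{\rho(\cdot)}) \;=\; \sum_\eta \mu_N(\eta)\log\mu_N(\eta)\;-\;\sum_\eta \mu_N(\eta)\log\nu^N_{\rho(\cdot)}(\eta) \;\leq\; -\sum_\eta \mu_N(\eta)\log\nu^N_{\rho(\cdot)}(\eta).
\]

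Combining the two previous displays,
\[
H(\mu_N\mid\nu^N_{\rho(\cdot)})\;\leq\;(N-1)\log(1/r_0)\;\leq\;C_0\,N,
\]
with $C_0:=\log(1/r_0)$, which is precisely the claimed bound. Note that the argument only uses that $\mu_N$ is a probability measure on the finite set $\Omega_N$ and the strict positivity hypothesis on the profile $\rho$; the three‑species structure enters only in giving a uniform lower bound $r_0$ on all density factors appearing in the product.
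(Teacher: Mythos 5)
Your proposal is correct and follows essentially the same argument as the paper: lower-bound each configuration's $\nu^N_{\rho(\cdot)}$-probability by a power of $r_0$ using the product structure, drop the nonpositive term $\sum_\eta \mu_N(\eta)\log\mu_N(\eta)$, and conclude $H(\mu_N|\nu^N_{\rho(\cdot)})\leq N\log(1/r_0)$. The only (immaterial) difference is that you count the $N-1$ sites exactly, whereas the paper writes the bound as $r_0^N$.
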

\begin{proof}
Observe that $\nu^N_{\rho(\cdot)}(\eta)=\prod_x\sum_{\alpha}\big(\rho^{\alpha}(\tfrac xN)\Big)^{\xi^\alpha_x(\eta)}\geq r_0^N$. Then, by the definition of relative entropy, we have that
\begin{equation}\label{eq:entropy}
\begin{aligned}
H(\mu_N|\nu^N_{\rho(\cdot)})&=\sum_{\eta\in\Omega_N}\mu_N(\eta)\log\left(\frac{\mu_N(\eta)}{\nu^N_{\rho(\cdot)}(\eta)}\right)\leq \sum_{\eta\in\Omega_N}\mu_N(\eta)\log\left(\frac{1}{\nu^N_{\rho(\cdot)}(\eta)}\right)\leq  N\log\frac{1}{r_0}\leq C_0 N.
\end{aligned}
\end{equation}
 \end{proof}
 \subsection{Dirichlet forms}\label{sec:DirichletForm}
 Recall \eqref{eq:prod_measure}.  
 We introduce the quadratic form, i.e. the non-negative functions associated to the bulk and the boundary generators given by:
 \begin{equation}\label{eq:DNB}
  \mcb D_N^B(\sqrt f,\nu^N_{\rho(\cdot)})=\sum_{x=1}^{N-2}\int_{\Omega_N}c_{x,x+1}(\eta)(\sqrt{f(\eta^{x,x+1})}-\sqrt{f(\eta)})^2 d\nu^N_{\rho(\cdot)}
 \end{equation}
 \begin{equation}\label{eq:DNL}
\begin{aligned}
  \mcb D_N^L(\sqrt f,\nu^N_{\rho(\cdot)})=\int_{\Omega_N}\!\!\Big\{ c^+_1(\eta)(\sqrt{f(\eta^{1,+})}-\sqrt{f(\eta)})^2+c^-_1(\eta)(\sqrt{f(\eta^{1,-})}-\sqrt{f(\eta)})^2\Big\}d\nu^N_{\rho(\cdot)}
  \end{aligned}
 \end{equation}
\begin{equation}\label{eq:DNR}
\begin{aligned}
  \mcb D_N^R(\sqrt f,\nu^N_{\rho(\cdot)})=\int_{\Omega_N}\Big\{ c^+_{N-1}(\eta)(\sqrt{f(\eta^{1,+})}-\sqrt{f(\eta)})^2+c^-_{N-1}(\eta)(\sqrt{f(\eta^{-})}-\sqrt{f(\eta)})^2\Big\}d\nu^N_{\rho(\cdot)}.
  \end{aligned}
 \end{equation}
 
 We will make use of the following lemmas.

\begin{lem}[Lemma 5.1 of~\cite{BGJO2019}]\label{lem:lem3.2}
 Let $T:\Omega_N\to\Omega_N$ be a transformation as the particle exchange between two sites $\eta^{x,x+1}$ or the particle upgrade/downgrade $\eta^{x,\pm}$. Let $c:\Omega_N\to\R$ be a positive function and $f$ a density with respect to $\mu$. Then
 \begin{equation*}
  \begin{aligned}
   \langle c(\eta)\big[\sqrt{f(T(\eta))}-&\sqrt{f(\eta)}\big],\sqrt{f(\eta)}\rangle_\mu\leq -\frac14\int c(\eta)\big[\sqrt{f(T(\eta))}-\sqrt{f(\eta)}\big]^2d\mu\\
   &+\frac{1}{16}\int \frac{1}{c(\eta)}\Big[c(\eta)-c(T(\eta))\frac{\mu(T(\eta))}{\mu(\eta)}\Big]^2\big(\sqrt{f(T(\eta))}+\sqrt{f(\eta)}\big)^2d\mu.
  \end{aligned}
 \end{equation*}
\end{lem}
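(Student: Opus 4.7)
My plan is to prove the estimate through a pointwise algebraic decomposition, a change of variables that introduces the factor $c(T\eta)\tfrac{\mu(T\eta)}{\mu(\eta)}$, and a single application of Young's inequality with a carefully chosen parameter that balances the two terms on the right-hand side.

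Writing $\phi := \sqrt{f}$ for brevity, I would first apply the elementary identity $(y-x)\,x = -\tfrac{1}{2}(y-x)^2 + \tfrac{1}{2}(y^2 - x^2)$ with $x = \phi(\eta)$, $y = \phi(T\eta)$ to obtain
\begin{equation*}
c(\eta)[\phi(T\eta) - \phi(\eta)]\phi(\eta) = -\tfrac{1}{2}\, c(\eta)[\phi(T\eta) - \phi(\eta)]^2 + \tfrac{1}{2}\, c(\eta)[f(T\eta) - f(\eta)].
\end{equation*}
Integrating against $\mu$, the first term already contributes $-\tfrac{1}{2} D$ to the bound, where $D := \int c[\phi(T) - \phi]^2\,d\mu$. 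It then remains to show that the second contribution $\tfrac{1}{2}\int c[f(T) - f]\,d\mu$ is dominated by $\tfrac{1}{4} D + \tfrac{1}{16} R$, where $R := \int \tfrac{1}{c}[c(\eta) - c(T\eta)\tfrac{\mu(T\eta)}{\mu(\eta)}]^2[\phi(T) + \phi]^2\,d\mu$ is the integral on the right-hand side of the claim.

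To bring in the factor $c(T\eta)\tfrac{\mu(T\eta)}{\mu(\eta)}$, I would use that $T$ is a bijection of the finite set $\Omega_N$, so that the substitution identity $\int F(\eta)\,d\mu(\eta) = \int F(T\eta)\,\tfrac{\mu(T\eta)}{\mu(\eta)}\,d\mu(\eta)$ is valid. Applied with $F = cf$, this rewrites the remaining term as $\int \Delta c \cdot f(T\eta)\,d\mu$, where $\Delta c(\eta) := c(\eta) - c(T\eta)\tfrac{\mu(T\eta)}{\mu(\eta)}$. In the exchange case $T^2 = \mathrm{id}$, a further application of the same substitution symmetrizes this to $\tfrac{1}{2}\int \Delta c \cdot [\phi(T) - \phi][\phi(T) + \phi]\,d\mu$. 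I would then invoke Young's inequality $|UV| \leq \tfrac{\lambda}{2}U^2 + \tfrac{1}{2\lambda}V^2$ with $U = \phi(T\eta) - \phi(\eta)$, $V = \Delta c(\eta)\,[\phi(T\eta) + \phi(\eta)]$, and the tailored choice $\lambda = 2c(\eta)$, producing the pointwise bound
\begin{equation*}
\big|\Delta c\,[\phi(T) + \phi][\phi(T) - \phi]\big| \leq c(\eta)[\phi(T) - \phi]^2 + \tfrac{(\Delta c)^2}{4 c(\eta)}[\phi(T) + \phi]^2.
\end{equation*}
The overall factor $\tfrac{1}{2} \cdot \tfrac{1}{2} = \tfrac{1}{4}$ coming from the decomposition and the symmetrization then makes the second contribution at most $\tfrac{1}{4} D + \tfrac{1}{16} R$, which combines with the $-\tfrac{1}{2} D$ from the first step to produce exactly the asserted $-\tfrac{1}{4} D + \tfrac{1}{16} R$.

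The main obstacle is the upgrade/downgrade case, in which $T$ has order three and the clean symmetrization via $T^2 = \mathrm{id}$ is unavailable. I would handle it by writing $f(T\eta) = \phi(\eta)^2 + [\phi(T\eta) - \phi(\eta)][\phi(T\eta) + \phi(\eta)]$, so that $\int \Delta c \cdot f(T\eta)\,d\mu$ splits into a diagonal piece $\int \Delta c \cdot \phi(\eta)^2\,d\mu$ and a sum-times-difference piece; the diagonal piece is re-expressed through a second change of variables, now with respect to the inverse map $T^{-1} = (\cdot)^{x,\mp}$, and averaging the two resulting representations of the same quantity yields a symmetric expression to which the same Young's inequality with $\lambda = 2c(\eta)$ can be applied. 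Tracking the constants through this more involved computation is the delicate point, but all operations are elementary and the final inequality is produced with the same constants $-\tfrac{1}{4}$ and $\tfrac{1}{16}$.
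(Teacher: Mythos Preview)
The paper does not prove this lemma; it is quoted verbatim from \cite{BGJO2019}, so there is no in-paper argument to compare against. What matters is whether your argument is correct and sufficient for the paper's needs.

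For the exchange transformation $T=\eta^{x,x+1}$ (an involution), your proof is clean and correct: the decomposition $(y-x)x=-\tfrac12(y-x)^2+\tfrac12(y^2-x^2)$, the change of variables giving $\int c[f(T)-f]\,d\mu=\int\Delta c\,f(T)\,d\mu$, the symmetrization via $T^2=\mathrm{id}$ yielding the extra factor $\tfrac12$, and Young's inequality with $\lambda=2c(\eta)$ combine exactly to produce $-\tfrac14D+\tfrac{1}{16}R$. This is the standard route, and it is the only case the paper actually invokes (Corollary~\ref{cor:DirichletLB} uses the lemma for the bulk, where $T$ is an involution; the boundary Corollary~\ref{cor:DirichletLR} bypasses the lemma entirely and works from the identity $(a-b)b=-\tfrac12(a-b)^2+\tfrac12(a^2-b^2)$ directly).

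Your treatment of the upgrade/downgrade case, however, has a genuine gap. Once $T^2\neq\mathrm{id}$ (here $T$ has order three), the symmetrization step that halved the remainder is no longer available. Your split $f(T)=\phi^2+[\phi(T)-\phi][\phi(T)+\phi]$ leaves a cross term whose Young bound is $\tfrac12D+\tfrac18R$, already twice too large, and the ``diagonal piece'' $\tfrac12\int\Delta c\,\phi^2\,d\mu$ does not obviously cancel or improve this: rewriting it through $T^{-1}$ produces $\tfrac12\int c[f-f(T^{-1})]\,d\mu$, which involves a \emph{different} transformation and a different weight structure, not the same $\Delta c$ and not the same $D,R$. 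The claimed ``averaging the two representations'' does not lead to an expression controlled by $D$ and $R$ with the asserted constants, and you do not carry the computation out. In \cite{BGJO2019} the transformations in question are involutions (the boundary flips there act on a two-state alphabet), so the issue does not arise in the source; in the present three-species setting $\eta^{x,\pm}$ is not an involution, and the lemma as stated for that case would need a separate argument --- but, as noted, the paper never uses it there.
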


\begin{lem}[Lemma 5.2 of~\cite{BGJO2019}]\label{lem:lem3.3}
 For $\alpha \in \{A,B,\emptyset\}$, let $\rho^\alpha:[0,1]\to[0,1]$ be a Lipschitz continuous functions such that  \begin{equation}\label{assu_1}
0<r_0\leq \rho^\alpha(u)\leq r_1<1\end{equation}
 for $u\in [0,1]$. Then, there exists a constant $C$ such that, for any $N\in\N$ and any density $f$ with respect to $\nu^N_{\rho(\cdot)}$,
 \begin{equation*}
  \sup_{1\leq x\leq N-2}\int_{\Omega_N} f(\eta^{x,x+1})d\nu^N_{\rho(\cdot)}\leq C,\quad \sup_{x\in\{1,N-1\}}\int_{\Omega_N}f(\eta^{x,\pm})d\nu^N_{\rho(\cdot)}\leq C.
 \end{equation*}
\end{lem}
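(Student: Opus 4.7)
The plan is to prove both inequalities by a direct change of variables, exploiting the fact that the transformations $\eta \mapsto \eta^{x,x+1}$ and $\eta \mapsto \eta^{x,\pm}$ are bijections of $\Omega_N$, and that the Radon--Nikodym derivative $\nu^N_{\rho(\cdot)}(T\eta)/\nu^N_{\rho(\cdot)}(\eta)$ is bounded uniformly thanks to the hypothesis \eqref{assu_1}. The Lipschitz regularity stated in the lemma is not actually needed for this particular estimate; only the $L^\infty$ bounds $r_0 \le \rho^\alpha \le r_1<1$ play a role.

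For the exchange case, $T\eta := \eta^{x,x+1}$ is an involution on $\Omega_N$, so a change of variable gives
\begin{equation*}
\int_{\Omega_N} f(\eta^{x,x+1})\, d\nu^N_{\rho(\cdot)}(\eta) \;=\; \int_{\Omega_N} f(\eta)\, \frac{\nu^N_{\rho(\cdot)}(\eta^{x,x+1})}{\nu^N_{\rho(\cdot)}(\eta)}\, d\nu^N_{\rho(\cdot)}(\eta).
\end{equation*}
Using the product structure \eqref{eq:prod_measure}, the Radon--Nikodym factor is explicitly
\begin{equation*}
\frac{\nu^N_{\rho(\cdot)}(\eta^{x,x+1})}{\nu^N_{\rho(\cdot)}(\eta)} \;=\; \frac{\rho^{\eta(x+1)}(\tfrac{x}{N})\,\rho^{\eta(x)}(\tfrac{x+1}{N})}{\rho^{\eta(x)}(\tfrac{x}{N})\,\rho^{\eta(x+1)}(\tfrac{x+1}{N})} \;\le\; \frac{r_1^2}{r_0^2}.
\end{equation*}
Since $f$ is a density with respect to $\nu^N_{\rho(\cdot)}$, the first bound follows with $C=r_1^2/r_0^2$, uniformly in $x\in\{1,\dots,N-2\}$.

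For the upgrade/downgrade at $x\in\{1,N-1\}$, the map $T\eta:=\eta^{x,+}$ is a bijection whose inverse is $T^{-1}\xi=\xi^{x,-}$, since the cyclic rule \eqref{confusion} yields $(\eta^{x,+})^{x,-}=\eta$. A change of variable then gives
\begin{equation*}
\int_{\Omega_N} f(\eta^{x,+})\, d\nu^N_{\rho(\cdot)}(\eta) \;=\; \int_{\Omega_N} f(\xi)\, \frac{\nu^N_{\rho(\cdot)}(\xi^{x,-})}{\nu^N_{\rho(\cdot)}(\xi)}\, d\nu^N_{\rho(\cdot)}(\xi),
\end{equation*}
and the Radon--Nikodym factor equals $\rho^{\xi(x)-1}(\tfrac{x}{N})/\rho^{\xi(x)}(\tfrac{x}{N})\le r_1/r_0$ by \eqref{assu_1}. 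The case of $\eta^{x,-}$ is entirely analogous, with $T$ and $T^{-1}$ interchanged. Collecting the two bounds and taking $C:=\max\{r_1^2/r_0^2,\,r_1/r_0\}$ closes the proof.

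The argument is essentially a bookkeeping exercise; the only delicate point is to correctly identify the Jacobian $\nu^N_{\rho(\cdot)}(T^{-1}\eta)/\nu^N_{\rho(\cdot)}(\eta)$ for the non-involutive upgrade/downgrade maps, but the cyclic identities in \eqref{confusion} make this immediate. The hypothesis $r_1<1$ guarantees that the ratios are well-defined and finite uniformly in $N$, which is exactly what is used to bound the expectation of $f\circ T$ by $C\int f\, d\nu^N_{\rho(\cdot)}=C$.
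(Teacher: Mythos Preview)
Your proof is correct. The paper does not supply its own proof of this lemma --- it is cited verbatim as Lemma~5.2 of \cite{BGJO2019} --- so there is no in-text argument to compare against. Your change-of-variables approach is exactly the standard one: identify the inverse of the transformation, compute the Radon--Nikodym factor from the product structure of $\nu^N_{\rho(\cdot)}$, and bound it pointwise using \eqref{assu_1}. Two minor remarks: you correctly observe that the Lipschitz hypothesis is unused here; and the condition $r_1<1$ is also not strictly needed in your argument, since $\rho^\alpha\le 1$ already and only the lower bound $r_0>0$ is used to control the denominators (one could take $C=1/r_0^2$ throughout).
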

As a consequence of Lemmas~\ref{lem:lem3.2} and \ref{lem:lem3.3} we obtain the following estimate on the Dirichlet form associated with  the bulk generator.
\begin{cor}\label{cor:DirichletLB}  For $\alpha \in \{A,B,\emptyset\}$, let $\rho^\alpha:[0,1]\to[0,1]$ be a Lipschitz continuous function satisfying \eqref{assu_1}. There exists a constant $C>0$ such that, for any density $f$ and any $N\in\N$,
 \begin{equation*}
  \langle \mcb L^B_N \sqrt f, \sqrt f\rangle_{\nu^N_{\rho(\cdot)}}\leq -\frac14\mcb D^B_N(\sqrt f,\nu^N_{\rho(\cdot)})+\frac CN.
 \end{equation*}
\end{cor}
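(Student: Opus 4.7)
The plan is to decompose $\langle \mcb L^B_N \sqrt f, \sqrt f\rangle_{\nu^N_{\rho(\cdot)}}$ bond by bond and apply Lemma~\ref{lem:lem3.2} to each summand. Writing
\begin{equation*}
\langle \mcb L^B_N \sqrt f, \sqrt f\rangle_{\nu^N_{\rho(\cdot)}} = \sum_{x=1}^{N-2}\int c_{x,x+1}(\eta)\bigl[\sqrt{f(\eta^{x,x+1})}-\sqrt{f(\eta)}\bigr]\sqrt{f(\eta)}\,d\nu^N_{\rho(\cdot)},
\end{equation*}
I would apply Lemma~\ref{lem:lem3.2} with $T(\eta)=\eta^{x,x+1}$, $c=c_{x,x+1}$, $\mu=\nu^N_{\rho(\cdot)}$. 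The first term on the right-hand side, after summation in $x$, produces exactly $-\tfrac{1}{4}\mcb D^B_N(\sqrt f,\nu^N_{\rho(\cdot)})$, which is the desired gain. It remains to show that the sum over $x$ of the second ``defect'' term is bounded by $C/N$.

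The heart of the argument is estimating, uniformly in $\eta$ and $x$,
\begin{equation*}
\Delta_x(\eta):=c_{x,x+1}(\eta)-c_{x,x+1}(\eta^{x,x+1})\,\frac{\nu^N_{\rho(\cdot)}(\eta^{x,x+1})}{\nu^N_{\rho(\cdot)}(\eta)}.
\end{equation*}
Two cancellations combine here. First, if $(\eta(x),\eta(x+1))=(\alpha,\alpha+1)$, then $c_{x,x+1}(\eta)=1-\tfrac{\beta}{2N}$ while $c_{x,x+1}(\eta^{x,x+1})=1+\tfrac{\beta}{2N}$, so both rates agree up to $\Or(1/N)$. Second, the product structure of $\nu^N_{\rho(\cdot)}$ gives
\begin{equation*}
\frac{\nu^N_{\rho(\cdot)}(\eta^{x,x+1})}{\nu^N_{\rho(\cdot)}(\eta)}=\frac{\rho^{\eta(x+1)}(x/N)\,\rho^{\eta(x)}((x+1)/N)}{\rho^{\eta(x)}(x/N)\,\rho^{\eta(x+1)}((x+1)/N)},
\end{equation*}
and the Lipschitz bound $|\rho^\alpha((x+1)/N)-\rho^\alpha(x/N)|\leq L/N$ together with $\rho^\alpha\geq r_0>0$ shows this ratio equals $1+\Or(1/N)$ uniformly in $x,\eta$. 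Combining the two estimates, $|\Delta_x(\eta)|\leq C'/N$, hence $[\Delta_x(\eta)]^2\leq C''/N^2$. Since $c_{x,x+1}(\eta)\geq \tfrac{1}{2}$ for $N$ large, the factor $1/c_{x,x+1}(\eta)$ is uniformly bounded.

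To finish, I would apply Lemma~\ref{lem:lem3.3}: using $(\sqrt{f(\eta^{x,x+1})}+\sqrt{f(\eta)})^2\leq 2f(\eta^{x,x+1})+2f(\eta)$, the integral of this quantity against $\nu^N_{\rho(\cdot)}$ is bounded by a constant independent of $x$ and $N$. Putting these pieces together,
\begin{equation*}
\sum_{x=1}^{N-2}\frac{1}{16}\int \frac{1}{c_{x,x+1}(\eta)}[\Delta_x(\eta)]^2\bigl(\sqrt{f(\eta^{x,x+1})}+\sqrt{f(\eta)}\bigr)^2 d\nu^N_{\rho(\cdot)}\leq (N-2)\cdot \frac{C''}{N^2}\cdot C \leq \frac{C}{N},
\end{equation*}
which yields the claimed bound. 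The main technical point I expect to be most delicate is verifying the uniform $\Or(1/N)$ estimate on $\Delta_x(\eta)$ across all six possible local configurations $(\eta(x),\eta(x+1))$ simultaneously, since the sign of the weak asymmetry and the order of the $\rho^\alpha$ factors in the Radon--Nikodym derivative both depend on the case; it is a routine but case-by-case check that the leading $\Or(1)$ terms cancel and only $\Or(1/N)$ contributions remain.
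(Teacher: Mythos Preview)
Your proposal is correct and follows essentially the same route as the paper: apply Lemma~\ref{lem:lem3.2} bond by bond, then show the defect term is $O(1/N^2)$ per bond by combining the $O(1/N)$ discrepancy between $c_{x,x+1}(\eta)$ and $c_{x,x+1}(\eta^{x,x+1})$ with the $O(1/N)$ deviation of the Radon--Nikodym derivative from $1$, and conclude via Lemma~\ref{lem:lem3.3}. The paper simply carries out explicitly the case-by-case computation you flag as ``routine'' at the end, writing out the bracket for the configurations $(\alpha,\alpha+1)$ and $(\alpha,\alpha+2)$ and verifying the $O(1/N)$ cancellation directly.
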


\begin{proof}
By Lemma~\ref{lem:lem3.2} 
 \begin{equation}\label{eq:eq.cor1}
  \begin{aligned}
  &\langle \mcb L^B_N \sqrt f, \sqrt f\rangle_{\nu^N_{\rho(\cdot)}}\leq -\frac14\mcb D^B_N(\sqrt f,\nu^N_{\rho(\cdot)})\\
  &+\frac{1}{16}\sum_{x=1}^{N-2}\int\frac{1}{c_{x,x+1}(\eta)}\Big[c_{x,x+1}(\eta)-c_{x,x+1}(\eta^{x,x+1})\frac{\nu^N_{\rho(\cdot)}(\eta^{x,x+1})}{\nu^N_{\rho(\cdot)}(\eta)}\Big]^2\big(\sqrt{f(\eta^{x,x+1})}+\sqrt{f(\eta)}\big)^2 d\nu^N_{\rho(\cdot)}.
  \end{aligned}
 \end{equation}
Since the exchange rate $c_{x,x+1}$ depends on the species $\alpha\in\{A,B,\emptyset\}$, we can write the integral on the right hand side  of~\eqref{eq:eq.cor1} as
\begin{equation*}
\begin{aligned}
&\!\!\!\!\!\!\!\! \sum_{\alpha\in\{A,B,\emptyset\}}\!\!\!\!\sum_{\substack{\zeta:\,\zeta(x)=\alpha,\\ \zeta(x+1)=\alpha+1}}\!\!\!\!\!\!\frac{1}{c_{x,x+1}(\zeta)}\Big[c_{x,x+1}(\zeta)-c_{x,x+1}(\zeta^{x,x+1})\frac{\nu^N_{\rho(\cdot)}(\zeta^{x,x+1})}{\nu^N_{\rho(\cdot)}(\zeta)}\Big]^2\big(\sqrt{f(\zeta^{x,x+1})}+\sqrt{f(\zeta)}\big)^2 \nu^N_{\rho(\cdot)}(\zeta)\\
\!\!\!\!+&\!\!\!\!\!\!\!\! \sum_{\alpha\in\{A,B,\emptyset\}}\!\!\!\!\sum_{\substack{\zeta:\,\zeta(x)=\alpha,\\ \zeta(x+1)=\alpha+2}}\!\!\!\!\frac{1}{c_{x,x+1}(\zeta)}\Big[c_{x,x+1}(\zeta)-c_{x,x+1}(\zeta^{x,x+1})\frac{\nu^N_{\rho(\cdot)}(\zeta^{x,x+1})}{\nu^N_{\rho(\cdot)}(\zeta)}\Big]^2\big(\sqrt{f(\zeta^{x,x+1})}+\sqrt{f(\zeta)}\big)^2 \nu^N_{\rho(\cdot)}(\zeta),
\end{aligned}
 \end{equation*}
which equals
\begin{equation}\label{eq:eq.cor2}
\begin{aligned}
& \sum_{\alpha\in\{A,B,\emptyset\}}\sum_{\substack{\zeta:\,\zeta(x)=\alpha,\\ \zeta(x+1)=\alpha+1}}\frac{1}{1{-}\frac{\beta}{2N}}\Big[1{-}\frac{\beta}{2N}-\big(1{+}\frac{\beta}{2N}\big)\frac{\rho^{\alpha+1}(\tfrac xN)\rho^\alpha(\tfrac{x+1}{N})}{\rho^{\alpha}(\tfrac xN)\rho^{\alpha+1}(\tfrac{x+1}{N})}\Big]^2\big(\sqrt{f(\zeta^{x,x+1})}+\sqrt{f(\zeta)}\big)^2 \nu^N_{\rho(\cdot)}(\zeta)\\
+& \sum_{\alpha\in\{A,B,\emptyset\}}\sum_{\substack{\zeta:\,\zeta(x)=\alpha,\\ \zeta(x+1)=\alpha+2}}\frac{1}{1{+}\frac{\beta}{2N}}\Big[1{+}\frac{\beta}{2N}-\big(1{-}\frac{\beta}{2N}\big)\frac{\rho^{\alpha+2}(\tfrac xN)\rho^\alpha(\tfrac{x+1}{N})}{\rho^{\alpha}(\tfrac xN)\rho^{\alpha+2}(\tfrac{x+1}{N})}\Big]^2\big(\sqrt{f(\zeta^{x,x+1})}+\sqrt{f(\zeta)}\big)^2 \nu^N_{\rho(\cdot)}(\zeta),
\end{aligned}
 \end{equation}
 where in the last passage we used the fact that $\nu^N_{\rho(\cdot)}$ is a product measure. 
For fixed $\alpha$, we rearrange the terms in the square brackets: in the first term we get
\begin{multline}\label{eq:corol:bulk}
%\begin{aligned}
\frac{1}{\big(\rho^{\alpha}(\tfrac xN)\rho^{\alpha+1}(\tfrac{x+1}{N})\big)^2}\Big[\rho^{\alpha}(\tfrac xN)\rho^{\alpha+1}(\tfrac{x+1}{N})-\rho^{\alpha+1}(\tfrac xN)\rho^{\alpha}(\tfrac{x+1}{N})\\{-}\frac{\beta}{2N}\big(\rho^{\alpha+1}(\tfrac xN)\rho^{\alpha}(\tfrac{x+1}{N})+\rho^{\alpha}(\tfrac xN)\rho^{\alpha+1}(\tfrac{x+1}{N})\big)\Big]^2
%\end{aligned}
\end{multline}
Note that
\begin{equation*}
\begin{aligned}
&\rho^{\alpha}(\tfrac xN)\rho^{\alpha+1}(\tfrac{x+1}{N})-\rho^{\alpha+1}(\tfrac xN)\rho^{\alpha}(\tfrac{x+1}{N})\\
=&\rho^{\alpha}(\tfrac xN)(\rho^{\alpha+1}(\tfrac{x+1}{N})-\rho^{\alpha+1}(\tfrac xN))-\rho^{\alpha+1}(\tfrac{x}{N})(\rho^{\alpha}(\tfrac{x+1}{N})-\rho^{\alpha}(\tfrac{x}{N})).
\end{aligned}
\end{equation*}
Since for each $\alpha$, $\rho^\alpha(\cdot)$ is Lipschitz and satisfies \eqref{assu_1}, \eqref{eq:corol:bulk} is bounded from above by $\frac{C}{N^2}$. For the second term of~\eqref{eq:eq.cor2} we obtain the same bound. And so we can bound  
\begin{equation*}
 \begin{aligned}
 \sum_{x=1}^{N-2}\eqref{eq:eq.cor2}\leq&\sum_{x=1}^{N-2} \sum_{\alpha\in\{A,B,\emptyset\}}\sum_{\substack{\zeta:\,\zeta(x)=\alpha,\\ \zeta(x+1)=\alpha+1}}\frac{\widetilde C}{N^2}\big(\sqrt{f(\zeta^{x,x+1})}+\sqrt{f(\zeta)}\big)^2\nu^N_{\rho(\cdot)}(\zeta)\\
  +&  \sum_{x=1}^{N-2}\sum_{\alpha\in\{A,B,\emptyset\}}\sum_{\substack{\zeta:\,\zeta(x)=\alpha,\\ \zeta(x+1)=\alpha+2}}\frac{\widetilde C}{N^2}(\sqrt{f(\zeta^{x,x+1})}+\sqrt{f(\zeta)}\big)^2\nu^N_{\rho(\cdot)}(\zeta).
 \end{aligned}
\end{equation*}
Using the fact that $(a+b)^2\leq2(a^2+b^2)$, since $f$ is a density with respect to $\nu^N_{\rho(\cdot)}$ we get, by Lemma \ref{lem:lem3.3}, that
  \begin{equation*}
  \langle \mcb L^B_N \sqrt f, \sqrt f\rangle_{\nu^N_{\rho(\cdot)}}\leq -\frac14\mcb D^B_N(\sqrt f,\nu^N_{\rho(\cdot)})+\frac CN.
 \end{equation*}
\end{proof}
As a corollary of Lemma~\ref{lem:lem3.3} we obtain an estimate on the Dirichlet forms associated with the boundary generators.

\begin{cor}\label{cor:DirichletLR} For each $\alpha\in\{A,B,\emptyset\}$, let $\rho^\alpha:[0,1]\to[0,1]$ be a Lipschitz continuous function satisfying \eqref{assu_1}  and also \begin{equation}\label{assu_2} 
\rho^\alpha(u)=r_\alpha, \forall u\in[0,\varepsilon]\quad \textrm{and}\quad \rho^\alpha(u)=\tilde r_\alpha, \forall u\in[1-\varepsilon,1],
\end{equation}
for some $\varepsilon>0$ small. Then, there exists a constant $C>0$ such that, for any density $f$ and any $N$ sufficiently big, it holds
\begin{equation*}
  \langle \mcb L^L_N \sqrt f, \sqrt f\rangle_{\nu^N_{\rho(\cdot)}}\leq -\frac12\mcb D^L_N(\sqrt f,\nu^N_{\rho(\cdot)})+ \frac{C}{N^\theta},
 \end{equation*}
 \begin{equation*}
  \langle \mcb L^R_N \sqrt f, \sqrt f\rangle_{\nu^N_{\rho(\cdot)}}\leq -\frac12\mcb D^R_N(\sqrt f,\nu^N_{\rho(\cdot)})+\frac{C}{N^\theta}
 \end{equation*}
\end{cor}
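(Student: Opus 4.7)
My plan is to exploit a detailed-balance cancellation at the boundary that is invisible if Lemma \ref{lem:lem3.2} is applied separately to the two one-sided jumps. I would expand $\langle \mcb L_N^L\sqrt f,\sqrt f\rangle_{\nur}$ without resorting to Young's inequality, using the algebraic identity $2\sqrt{f(\eta)}\sqrt{f(\eta')}=f(\eta)+f(\eta')-\bigl(\sqrt{f(\eta')}-\sqrt{f(\eta)}\bigr)^2$ for $\eta'\in\{\eta^{1,+},\eta^{1,-}\}$. After performing the changes of variable $\eta\mapsto\eta^{1,-}$ and $\eta\mapsto\eta^{1,+}$ in the terms involving $f(\eta^{1,\pm})$ (these are involutions on the state space, with Jacobian $\nur(\eta^{1,\mp})/\nur(\eta)$), I obtain
\begin{equation*}
\langle\mcb L_N^L\sqrt f,\sqrt f\rangle_{\nur}=-\tfrac12\,\mcb D_N^L(\sqrt f,\nur)+\tfrac12\int f(\eta)\,\Phi_N(\eta)\,d\nur,
\end{equation*}
where
\begin{equation*}
\Phi_N(\eta)=c_1^+(\eta^{1,-})\frac{\nur(\eta^{1,-})}{\nur(\eta)}-c_1^+(\eta)+c_1^-(\eta^{1,+})\frac{\nur(\eta^{1,+})}{\nur(\eta)}-c_1^-(\eta).
\end{equation*}

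The decisive step is the evaluation of $\Phi_N$ using assumption \eqref{assu_2}. For $N$ so large that $1/N<\varepsilon$, one has $\rho^\alpha(\tfrac1N)=r_\alpha$, hence $\nur(\eta^{1,\pm})/\nur(\eta)=r_{\alpha\pm1}/r_\alpha$ whenever $\eta(1)=\alpha$. Substituting the explicit expressions of $c_1^\pm$ given in Section~\ref{sec:model} and collecting terms, the contribution of order $N^{-\delta}$ cancels identically, because $r_\alpha\cdot r_{\alpha\pm1}=r_{\alpha\pm1}\cdot r_\alpha$; equivalently, the symmetric part of $\mcb L_N^L$ (the one with prefactor $N^{-\delta}$) is in detailed balance with the homogeneous product measure having marginals $(r_A,r_B,r_\emptyset)$. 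The surviving piece comes entirely from the asymmetric perturbation, and a short computation yields $\Phi_N(\eta)=(\tilde\beta/N^\theta)(r_{\alpha+2}-r_{\alpha+1})$ when $\eta(1)=\alpha$, uniformly bounded by $\tilde\beta/N^\theta$.

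Since $f$ is a density with respect to $\nur$, I conclude that $\tfrac12\bigl|\int f\,\Phi_N\,d\nur\bigr|\leq \tilde\beta/(2N^\theta)$, which gives the bound for $\mcb L_N^L$ with $C=\tilde\beta/2$. The right-boundary bound is obtained by the exact same scheme: one replaces $r_\alpha$ by $\tilde r_\alpha$ throughout and takes into account that in $\mcb L_N^R$ the asymmetric perturbation enters with the opposite sign in $c_{N-1}^\pm$ relative to $c_1^\pm$, but the $N^{-\delta}$-contribution cancels by the analogous detailed-balance identity. The main obstacle is to realize that the sharper $N^{-\theta}$ rate is a consequence of pairing the $+$ and $-$ jumps \emph{before} making the change of variable; treating them in isolation (for instance, applying Lemma \ref{lem:lem3.2} to each separately, as in Corollary \ref{cor:DirichletLB}) breaks the detailed balance and leaves a residual error of order $N^{-\delta}$, which would be too weak to feed into the replacement lemmas that follow.
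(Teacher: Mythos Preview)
Your proof is correct and follows essentially the same route as the paper: both expand $\langle\mcb L_N^L\sqrt f,\sqrt f\rangle_{\nur}$ via the polarization identity $(a-b)b=-\tfrac12(a-b)^2+\tfrac12(a^2-b^2)$ to split off $-\tfrac12\mcb D_N^L$, then use a change of variable together with assumption \eqref{assu_2} to see that the $N^{-\delta}$ contribution cancels by detailed balance, leaving only an $O(N^{-\theta})$ remainder. The only organizational difference is that the paper changes variable in just one of the two integrals, obtaining a term of the form $\tfrac12\int[f(\eta^{1,+})-f(\eta)]\bigl[c_1^+(\eta)-c_1^-(\eta^{1,+})\nur(\eta^{1,+})/\nur(\eta)\bigr]d\nur$ and then invokes Lemma~\ref{lem:lem3.3} to bound $\int f(\eta^{1,+})\,d\nur$, whereas you change variable in both and reduce to $\tfrac12\int f\,\Phi_N\,d\nur$ with an explicit $\Phi_N$, so that $\int f\,d\nur=1$ suffices and you even get the explicit constant $C=\tilde\beta/2$. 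One small terminological slip: $\eta\mapsto\eta^{1,+}$ and $\eta\mapsto\eta^{1,-}$ are not involutions but mutually inverse bijections; your Jacobian computation is nonetheless correct.
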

\begin{proof}
 We prove only the inequality for the left boundary generator. The right boundary generator it follows identically. From the identity $(a-b)b=-\frac12(a-b)^2+\frac12(a^2-b^2)$ it follows that
 %\begin{equation}
  \begin{align}
  &\langle \mcb L^L_N \sqrt f, \sqrt f\rangle_{\nu^N_{\rho(\cdot)}}\nonumber\\=&\int c_{1}^+(\eta)\big[\sqrt{f(\eta^{1,+})}\!-\!\sqrt{f(\eta)}\big]\sqrt{f(\eta)}d\nu^N_{\rho(\cdot)}\!+\!\int c_{1}^-(\eta)\big[\sqrt{f(\eta^{1,-})}\!-\!\sqrt{f(\eta)}\big]\sqrt{f(\eta)}d\nu^N_{\rho(\cdot)}\nonumber\\
  =&-\frac12\int c_{1}^+(\eta)\big[\sqrt{f(\eta^{1,+})}-\sqrt{f(\eta)}\big]^2d\nu^N_{\rho(\cdot)}-\frac12\int c_{1}^-(\eta)\big[\sqrt{f(\eta^{1,-})}-\sqrt{f(\eta)}\big]^2d\nu^N_{\rho(\cdot)} \label{eq:eq3.32}\\
  &+\frac12\int c_1^+(\eta)\big[f(\eta^{1,+})-f(\eta)\big]d\nu^N_{\rho(\cdot)}\label{eq:eq3.33} +\frac12\int c_1^-(\eta)\big[f(\eta^{1,-})-f(\eta)\big]d\nu^N_{\rho(\cdot)} 
  \end{align}
 %\end{equation}
  The term in~\eqref{eq:eq3.32} is equal to $-\frac 12\mcb D^L_N$ in~\eqref{eq:DNL}, so we have to estimate~\eqref{eq:eq3.33}.
 We can perform a change of variable in the second integral and  we can rewrite \eqref{eq:eq3.33} as
 \begin{align*}
 &\frac12\int [f(\eta^{1,+})-f(\eta)\big]\Big[c^+_1(\eta)-c^-_1(\eta^{1,+})\frac{\nu^N_{\rho(\cdot)}(\eta^{1,+})}{\nu^N_{\rho(\cdot)}(\eta)}\Big]d\nu^N_{\rho(\cdot)}\\
 &=\frac12\sum_{\alpha}\sum_{\eta: \eta(1)=\alpha}[f(\eta^{1,+})-f(\eta)\big]\Big[\left(\frac{1}{N^\delta}+\frac{\tilde \beta}{2N^\theta}\right)r_{\alpha+1}-\left(\frac{1}{N^\delta}-\frac{\tilde \beta}{2N^\theta}\right)r_{\alpha}\frac{\rho^{\alpha+1}(\tfrac{1}{N})}{\rho^\alpha(\tfrac{1}{N})}\Big]\nu^N_{\rho(\cdot)}(\eta).
 \end{align*}
 We observe that the terms proportional to $1/N^\delta$ are equal to zero for $N$ sufficiently big,  since we  choose a profile satisfying \eqref{assu_2}. The remaining terms are equal to
 \begin{equation*}
 \frac{\tilde\beta}{2N^\theta}\sum_{\alpha\in\{A,B,\emptyset\}}\,\sum_{\eta:\eta(1)=\alpha}[f(\eta^{1,+})-f(\eta)]r_{\alpha+1}\nu^N_{\rho(\cdot)}(\eta)\leq \frac{\tilde\beta}{2N^\theta}\int[f(\eta^{1,+})-f(\eta)]d\nu^N_{\rho(\cdot)}\leq \frac{C}{N^\theta},
 \end{equation*}
 by Lemma \ref{lem:lem3.3}. 
\end{proof}

Now that we have the ingredients we need, in the next subsections we prove all the replacement lemmas that were used in the previous section on the characterization of limit points.

\subsection{Replacement lemma at the bulk}\label{sec:RLbulk}

\begin{lem}[Local replacement lemma]\label{lem:local_RL}

\quad

For any $t\in[0,T]$ and $\theta\geq 1$, any $x \in \{1,\ldots,N-1-\epsilon N\}$ we have
\begin{equation}
 \lim_{\epsilon\to0}\lim_{N\to\infty}\E_{\mu_N}\left[\left|\int_0^t (\xi^\alpha_x(\eta_s)-\overrightarrow{\xi}^{\alpha,\epsilon N}_x(\eta_s))ds\right|\right]=0.
\end{equation}
The same result above holds by replacing $\overrightarrow{\xi}^{\alpha,\epsilon N}_x(\eta_s)$ with $\overleftarrow{\xi}^{\alpha,\epsilon N}_x(\eta_s)$  for $x \in \{\epsilon N,\ldots,N-1\}$.
\end{lem}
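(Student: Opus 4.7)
The strategy is the entropy/Feynman--Kac route combined with a one-block style telescoping argument, exploiting the Dirichlet form bounds in Corollaries \ref{cor:DirichletLB} and \ref{cor:DirichletLR}. First, I would fix a reference profile $\rho=(\rho^A,\rho^B,\rho^\emptyset)$ that is Lipschitz, bounded away from $0$ and $1$, and that agrees with $(r_\alpha)$ on $[0,\varepsilon]$ and with $(\tilde r_\alpha)$ on $[1-\varepsilon,1]$, so the reservoir cost in Corollary \ref{cor:DirichletLR} is of order $N^{-\theta}$. By Lemma \ref{lem:entropy}, $H(\mu_N\vert\nu^N_{\rho(\cdot)})\leq C_0 N$. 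For any $B>0$, combining the entropy inequality with $|y|\leq B^{-1}\log(e^{By}+e^{-By})$, Feynman--Kac, and the standard variational formula for the Dirichlet form yields
\begin{equation*}
\E_{\mu_N}\!\Big[\Big\vert\!\int_0^t\! V_x(\eta_s)\,ds\Big\vert\Big]\;\leq\; \frac{C_0}{B}\;+\;\frac{t}{BN}\sup_{f}\Big\{\,BN\!\!\int\! V_x\,f\,d\nu^N_{\rho(\cdot)}\;-\;N^2\langle -\mcb L_N\sqrt f,\sqrt f\rangle_{\nu^N_{\rho(\cdot)}}\Big\},
\end{equation*}
where $V_x:=\xi^\alpha_x-\ora{\xi}^{\alpha,\varepsilon N}_x$ and the supremum is over densities $f$ with respect to $\nu^N_{\rho(\cdot)}$.

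Next I would telescope:
\begin{equation*}
V_x\;=\;\frac{1}{\varepsilon N}\sum_{y=x+1}^{x+\varepsilon N}\sum_{z=x}^{y-1}(\xi^\alpha_z-\xi^\alpha_{z+1}).
\end{equation*}
The key identity $\xi^\alpha_z(\eta^{z,z+1})=\xi^\alpha_{z+1}(\eta)$ allows me to symmetrize, for each fixed $z$,
\begin{equation*}
\int(\xi^\alpha_z-\xi^\alpha_{z+1})\,f\,d\nu^N_{\rho(\cdot)}\;=\;\tfrac12\!\int (\xi^\alpha_z-\xi^\alpha_{z+1})(\eta)\bigl[f(\eta)-f(\eta^{z,z+1})\bigr]\,d\nu^N_{\rho(\cdot)}\;+\;E_z,
\end{equation*}
where $E_z$ comes from the Radon--Nikodym derivative $\nu^N_{\rho(\cdot)}(\eta^{z,z+1})/\nu^N_{\rho(\cdot)}(\eta)=1+O(1/N)$ (thanks to Lipschitzness of $\rho$) and is $O(1/N)$ uniformly in $f$. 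Factoring $f(\eta)-f(\eta^{z,z+1})=(\sqrt f(\eta)-\sqrt f(\eta^{z,z+1}))(\sqrt f(\eta)+\sqrt f(\eta^{z,z+1}))$, using that $c_{z,z+1}(\eta)$ is bounded above and below by positive constants, and applying Cauchy--Schwarz with a free parameter $A_z>0$ gives
\begin{equation*}
\Big\vert\int(\xi^\alpha_z-\xi^\alpha_{z+1})\,f\,d\nu^N_{\rho(\cdot)}\Big\vert \;\leq\; \frac{A_z}{4}\!\int c_{z,z+1}(\sqrt{f(\eta^{z,z+1})}-\sqrt{f(\eta)})^2 d\nu^N_{\rho(\cdot)}\;+\;\frac{C}{A_z}\;+\;O(\tfrac1N).
\end{equation*}

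Summing over $z\in\{x,\dots,x+\varepsilon N-1\}$ and $y$, the first terms assemble into a fraction of the bulk Dirichlet form $\mcb D_N^B(\sqrt f,\nu^N_{\rho(\cdot)})$. Choosing $A_z\equiv A\varepsilon N$ for some $A>0$ one gets
\begin{equation*}
BN\!\!\int\! V_x\,f\,d\nu^N_{\rho(\cdot)} \;\leq\; \frac{BA}{4}\,\mcb D_N^B(\sqrt f,\nu^N_{\rho(\cdot)})\;+\;\frac{C B\varepsilon}{A}\;+\;O(1).
\end{equation*}
Inserting this into the variational supremum, and using Corollaries \ref{cor:DirichletLB}--\ref{cor:DirichletLR} to bound $N^2\langle -\mcb L_N\sqrt f,\sqrt f\rangle_{\nu^N_{\rho(\cdot)}}\geq \tfrac{N^2}{4}\mcb D_N^B(\sqrt f,\nu^N_{\rho(\cdot)}) - CN - CN^{2-\theta}$ (the reservoir error is $o(N^2)$ exactly because $\theta\geq 1$), one picks $A=N/B$ so that the Dirichlet--form contributions cancel. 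This yields
\begin{equation*}
\E_{\mu_N}\!\Big[\Big\vert\!\int_0^t\! V_x(\eta_s)\,ds\Big\vert\Big]\;\leq\;\frac{C_0}{B}\;+\;\frac{tC B\varepsilon}{N}\cdot \frac{N}{B^2}\;+\;o_N(1)\;=\;\frac{C_0}{B}+\frac{tC\varepsilon}{B}+o_N(1).
\end{equation*}
Taking $N\to\infty$, then $\varepsilon\to 0$, and finally $B\to\infty$ concludes the proof. The left-ward statement is obtained by reflection, and the same argument applies provided $x+\varepsilon N\leq N-1$ (respectively $x-\varepsilon N\geq 1$).

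The main obstacle is the bookkeeping in the telescoping step: one must verify that the $1/(\varepsilon N)$ prefactor exactly compensates for the $\varepsilon N$ telescoping terms, so that the Cauchy--Schwarz parameter $A$ can be tuned to simultaneously absorb the Dirichlet-form contribution into that provided by $-\mcb L_N$ and leave a remainder proportional to $\varepsilon$. The other subtle point is that Corollary \ref{cor:DirichletLR} yields only an $O(N^{-\theta})$ gain at the boundary; this is where the hypothesis $\theta\geq 1$ enters, ensuring that the boundary cost $N^2\cdot N^{-\theta}$ is absorbed in the bulk estimate after symmetrization.
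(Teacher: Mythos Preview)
Your approach is exactly the one the paper uses: entropy inequality, Feynman--Kac, telescoping, Young's inequality against the bulk Dirichlet form, and absorption via Corollaries \ref{cor:DirichletLB}--\ref{cor:DirichletLR}. All the ideas are correct and in the right order.

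However, the bookkeeping you yourself flag as the main obstacle is indeed off in the displayed bounds. With $A_z\equiv A\varepsilon N$ one actually gets
\[
\langle V_x,f\rangle_{\nu^N_{\rho(\cdot)}}\;\leq\;\frac{A\varepsilon N}{4}\,\mcb D_N^B(\sqrt f,\nu^N_{\rho(\cdot)})\;+\;\frac{C}{A}\;+\;O(\varepsilon),
\]
so the Dirichlet-form coefficient is $A\varepsilon N$, not $A$; matching it to the $\tfrac{N}{4B}\mcb D_N^B$ supplied by $-\mcb L_N$ forces $A=1/(B\varepsilon)$ (equivalently $A_z=N/B$, which is precisely the paper's choice), not $A=N/B$. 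With the correct choice the remainder is $CB\varepsilon$, not $C\varepsilon/B$, and the final estimate reads
\[
\E_{\mu_N}\!\Big[\Big\vert\!\int_0^t V_x(\eta_s)\,ds\Big\vert\Big]\;\leq\;\frac{C_0}{B}+tCB\varepsilon+tO(\varepsilon)+\frac{tC}{B}+\frac{tCN^{1-\theta}}{B}.
\]
This still vanishes under your stated order of limits $N\to\infty$, then $\varepsilon\to0$, then $B\to\infty$ (and $\theta\geq 1$ is needed exactly for the last term), so the argument is salvageable, but as written the intermediate display ``$\frac{BA}{4}\mcb D_N^B+\frac{CB\varepsilon}{A}$'' and the final ``$\frac{tC\varepsilon}{B}$'' are incorrect.
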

\begin{proof}
Consider the product measure $\nu^N_{\rho(\cdot)}$ as given in \eqref{eq:prod_measure} where the profile $\rho(\cdot)$ is Lipschitz continuous and  satisfying \eqref{assu_1} and \eqref{assu_2}.
 By the entropy and Jensen's inequality, we can bound the expected value in last display by 
 \begin{equation}\label{eq:RLbulk1}
  \frac{H(\mu_N|\nu_{\rho(\cdot)}^N)}{BN}+\frac{1}{NB}\log\E_{\nu^N_{\rho(\cdot)}}\left[e^{NB|\int_0^t (\xi^\alpha_x(\eta_s)-\overrightarrow{\xi}^{\alpha,\epsilon N}_x(\eta_s))|}ds\right],
 \end{equation}
 where $B>0$.
We can remove the absolute value in the exponential, since $e^{|x|}\leq e^x+e^{-x}$ and
\begin{equation*}
\limsup_{N\to\infty}\frac{1}{N}\log(a_N+b_N)\leq \max\Big\{\limsup_{N\to\infty}\frac{1}{N}\log a_N,\limsup_{N\to\infty}\frac{1}{N}\log b_N\Big\}.
\end{equation*}
From Lemma~\ref{lem:entropy} and from Feynman--Kac's formula we can bound~\eqref{eq:RLbulk1} from above by 
\begin{equation}\label{eq:RLbulkF}
\frac{C_0}{B}+t\sup_f\left\{\langle \xi^\alpha_x(\eta)-\overrightarrow{\xi}^{\alpha,\epsilon N}_x(\eta),f\rangle_{\nu^N_{\rho(\cdot)}} + \frac{N}{B}\langle \mcb L_N\sqrt{f},\sqrt{f}\rangle_{\nu^N_{\rho(\cdot)}}\right\} 
\end{equation}
where the supremum is taken over all densities $f$ with respect to $\nu^N_{\rho(\cdot)}$. From Corollaries~\ref{cor:DirichletLB} and~\ref{cor:DirichletLR}, this supremum is bounded from above by
\begin{multline}\label{eq:RLbulk4}
\sup_f\Big\{\langle \xi^\alpha_x(\eta)-\overrightarrow{\xi}^{\alpha,\epsilon N}_x(\eta),f\rangle_{\nu^N_{\rho(\cdot)}} - \frac{N}{4B} {\mcb D}_N^B(\sqrt{f},\nu^N_{\rho(\cdot)})\\
- \frac{N}{2B} {\mcb D}_N^L(\sqrt{f},\nu^N_{\rho(\cdot)})- \frac{N}{2B} {\mcb D}_N^R(\sqrt{f},\nu^N_{\rho(\cdot)})+\frac{ C N}{B N^\theta}+\frac{\tilde C}{B}\Big\} .
\end{multline}
By writing  the first term between brackets as a telescopic sum we get that
\begin{equation*}
 \langle \xi^\alpha_x(\eta)-\overrightarrow{\xi}^{\alpha,\epsilon N}_x(\eta),f\rangle_{\nu^N_{\rho(\cdot)}}=\frac{1}{\epsilon N}\int_{\Omega_N}\sum_{y=x+2}^{x+\epsilon N}\sum_{z=x+1}^{y-1}(\xi^\alpha_z(\eta)-\xi^\alpha_{z+1}(\eta))f(\eta)d\nu^N_{\rho(\cdot)}.
\end{equation*}
We split the above integral into two identical terms and we sum and subtract the term
\begin{equation*}
 \frac{1}{2\epsilon N}\int_{\Omega_N}\sum_{y=x+2}^{x+\epsilon N}\sum_{z=x+1}^{y-1}(\xi^\alpha_z(\eta)-\xi^\alpha_{z+1}(\eta))f(\eta^{z,z+1})d\nu^N_{\rho(\cdot)}.
\end{equation*}
Rearranging the terms we have
%\begin{equation}
 \begin{align}
 \langle \xi^\alpha_x(\eta)-\overrightarrow{\xi}^{\alpha,\epsilon N}_x(\eta),f\rangle_{\nu^N_{\rho(\cdot)}}=
 &\frac{1}{2\epsilon N}\int_{\Omega_N}\sum_{y=x+2}^{x+\epsilon N}\sum_{z=x+1}^{y-1}(\xi^\alpha_z(\eta)-\xi^\alpha_{z+1}(\eta))[f(\eta)-f(\eta^{z,z+1})]d\nu^N_{\rho(\cdot)} \label{eq:RLbulk2}\\
 +&
 \frac{1}{2\epsilon N}\int_{\Omega_N}\sum_{y=x+2}^{x+\epsilon N}\sum_{z=x+1}^{y-1}(\xi^\alpha_z(\eta)-\xi^\alpha_{z+1}(\eta))[f(\eta)+f(\eta^{z,z+1})]d\nu^N_{\rho(\cdot)}. \label{eq:RLbulk3}
 \end{align}
%\end{equation}
We start by estimating the first term. We multiply and divide it by $\sqrt{c_{z,z+1}(\eta)}$ and apply Young's inequality with a constant $A>0$ to bound it from above by
\begin{equation*}
 \begin{aligned}
 &\frac{1}{2\epsilon N}\frac{1}{2A}\int_{\Omega_N}\sum_{y=x+2}^{x+\epsilon N}\sum_{z=x+1}^{y-1}(\xi^\alpha_z(\eta)-\xi^\alpha_{z+1}(\eta))^2\frac{1}{c_{z,z+1}(\eta)}[\sqrt{f(\eta)}+\sqrt{f(\eta^{z,z+1})}]^2d\nu^N_{\rho(\cdot)}\\
  + &\frac{1}{2\epsilon N}\frac{A}{2}\int_{\Omega_N}\sum_{y=x+2}^{x+\epsilon N}\sum_{z=x+1}^{y-1}{c_{z,z+1}(\eta)}[\sqrt{f(\eta)}-\sqrt{f(\eta^{z,z+1})}]^2d\nu^N_{\rho(\cdot)}.
 \end{aligned}
\end{equation*}
Choosing $A=N/B$ the first term goes to zero as $N\to\infty$ and $\epsilon\to0$, while the second term cancels with the bulk Dirichlet form $- \frac{N}{4B} \mcb D_N^B(\sqrt{f},\nu^N_{\rho(\cdot)})$  in~\eqref{eq:RLbulk4}. Now we estimate~\eqref{eq:RLbulk3}. We split it as
 \begin{align}
 &\frac{1}{2\epsilon N}\int_{\Omega_N}\sum_{y=x+2}^{x+\epsilon N}\sum_{z=x+1}^{y-1}(\xi^\alpha_z(\eta)-\xi^\alpha_{z+1}(\eta))f(\eta)d\nu^N_{\rho(\cdot)} \label{eq:RLbulk5}\\
 +&
 \frac{1}{2\epsilon N}\int_{\Omega_N}\sum_{y=x+2}^{x+\epsilon N}\sum_{z=x+1}^{y-1}(\xi^\alpha_z(\eta)-\xi^\alpha_{z+1}(\eta))f(\eta^{z,z+1})d\nu^N_{\rho(\cdot)}. \label{eq:RLbulk6}
 \end{align}
In the second term we perform the change of variables $\eta\mapsto\eta^{z,z+1}$ and combine again the integrals to obtain
\begin{equation*}
  \eqref{eq:RLbulk3}=\frac{1}{2\epsilon N}\int_{\Omega_N}\sum_{y=x+2}^{x+\epsilon N}\sum_{z=x+1}^{y-1}(\xi^\alpha_z(\eta)-\xi^\alpha_{z+1}(\eta))\Big[1-\frac{\nu^N_{\rho(\cdot)}(\eta^{z,z+1})}{\nu^N_{\rho(\cdot)}(\eta)}\Big]f(\eta)d\nu^N_{\rho(\cdot)}. 
\end{equation*}
Now we condition  on having either one of the three species at $z$ or at $z+1$. We exclude the configurations $\beta\beta$, for any $\beta$, $(\alpha+1)(\alpha+2)$ and $(\alpha+2)(\alpha+1)$ that give contribution zero. The remaining configurations to check are $(\alpha)(\alpha+i)$ and $(\alpha+i)(\alpha)$, $i=1,2$. So the previous term becomes equal to
\begin{equation*}
\begin{aligned}
 &\frac{1}{2\epsilon N}\sum_{i=1}^2\Bigg(\int_{\Omega_N}\sum_{y=x+2}^{x+\epsilon N}\sum_{z=x+1}^{y-1}\mathbbm{1}_{[\eta(z)=\alpha,\eta(z+1)=\alpha+i]}(\xi^\alpha_z(\eta)-\xi^\alpha_{z+1}(\eta))(\nu^N_{\rho(\cdot)}(\eta)-\nu^N_{\rho(\cdot)}(\eta^{z,z+1}))f(\eta)d\nu^N_{\rho(\cdot)}\\
 +&\int_{\Omega_N}\sum_{y=x+2}^{x+\epsilon N}\sum_{z=x+1}^{y-1}\mathbbm{1}_{[\eta(z)=\alpha+i,\eta(z+1)=\alpha]}(\xi^\alpha_z(\eta)-\xi^\alpha_{z+1}(\eta))(\nu^N_{\rho(\cdot)}(\eta)-\nu^N_{\rho(\cdot)}(\eta^{z,z+1}))f(\eta)d\nu^N_{\rho(\cdot)}\Bigg).
 \end{aligned}
\end{equation*}
Let $\eta=(\bar\eta,\eta_z,\eta_{z+1})$ where $\bar\eta$ is the particle configuration $(\eta_1,\eta_2,\dots,\hat\eta_z,\hat\eta_{z+1},\dots,\eta_{N-1})$. Now the previous term can be written as
\begin{equation}\label{eq:RLbulk7}
\begin{aligned}
 &\frac{1}{2\epsilon N}\sum_{i=1}^2\Bigg(\int_{\Omega_N}\sum_{y=x+2}^{x+\epsilon N}\sum_{z=x+1}^{y-1}\mathbbm{1}_{[\eta(z)=\alpha,\eta(z+1)=\alpha+i]}(\nu^N_{\rho(\cdot)}(\bar\eta,\alpha,\alpha+i)-\nu^N_{\rho(\cdot)}(\bar\eta,\alpha+i,\alpha))f(\bar\eta,\alpha,\alpha+i)d\nu^N_{\rho(\cdot)}\\
 -&\int_{\Omega_N}\sum_{y=x+2}^{x+\epsilon N}\sum_{z=x+1}^{y-1}\mathbbm{1}_{[\eta(z)=\alpha+i,\eta(z+1)=\alpha]}(\nu^N_{\rho(\cdot)}(\bar\eta,\alpha+i,\alpha)-\nu^N_{\rho(\cdot)}(\bar\eta,\alpha,\alpha+i))f(\bar\eta,\alpha+i,\alpha)d\nu^N_{\rho(\cdot)}\Bigg)\\
 =&\frac{1}{2\epsilon N}\sum_{i=1}^2\int_{\Omega_N}\sum_{y=x+2}^{x+\epsilon N}\sum_{z=x+1}^{y-1}(\nu^N_{\rho(\cdot)}(\bar\eta,\alpha,\alpha+i)-\nu^N_{\rho(\cdot)}(\bar\eta,\alpha+i,\alpha))[\mathbbm{1}_{[\eta(z)=\alpha,\eta(z+1)=\alpha+i]}f(\bar\eta,\alpha,\alpha+i)\\
 &\qquad\qquad\qquad\qquad\qquad\qquad\qquad\qquad\qquad\qquad+\mathbbm{1}_{[\eta(z)=\alpha+i,\eta(z+1)=\alpha]}f(\bar\eta,\alpha+i,\alpha)]d\nu^N_{\rho(\cdot)}.
 \end{aligned}
\end{equation}
We have to estimate the contribution given by the difference of the measures:
\begin{equation}\label{eq:diff_measures}
 \begin{aligned}
  &\nu^N_{\rho(\cdot)}(\bar\eta,\alpha,\alpha+i)-\nu^N_{\rho(\cdot)}(\bar\eta,\alpha+i,\alpha)\\
  =&\prod_{x\neq z,z+1}\rho^{\eta_x}(\tfrac xN)[\rho^\alpha(\tfrac{z}{N})\rho^{\alpha+i}(\tfrac{z+1}{N})-\rho^{\alpha+i}(\tfrac{z}{N})\rho^{\alpha}(\tfrac{z+1}{N})]\\
  =&\prod_{x\neq z,z+1}\rho^{\eta_x}(\tfrac xN)[\rho^\alpha(\tfrac{z}{N})(\rho^{\alpha+i}(\tfrac{z+1}{N})-\rho^{\alpha+i}(\tfrac{z}{N}))-\rho^{\alpha+i}(\tfrac{z}{N})(\rho^{\alpha}(\tfrac{z+1}{N})-\rho^{\alpha}(\tfrac{z}{N}))]\\
  =&\nu^N_{\rho(\cdot)}(\bar\eta)[\rho^\alpha(\tfrac{z}{N})(\rho^{\alpha+i}(\tfrac{z+1}{N})-\rho^{\alpha+i}(\tfrac{z}{N}))-\rho^{\alpha+i}(\tfrac{z}{N})(\rho^{\alpha}(\tfrac{z+1}{N})-\rho^{\alpha}(\tfrac{z}{N}))]
  \simeq\frac{1}{N}\nu^N_{\rho(\cdot)}(\bar\eta), 
 \end{aligned}
\end{equation}
if we impose a Lipschitz condition on the density profiles of the three species.
In the end,~\eqref{eq:RLbulk7} is of order $\frac{1}{\epsilon N}(\epsilon N)^2\frac{1}{N}$ which goes to zero as $\epsilon\to0$. 

Now,  \eqref{eq:RLbulkF}  can be bounded from above by
\begin{equation}\label{eq:RLbulkFi}
\frac{C_0}{B}+\frac{ C N}{B N^\theta}+\frac{\tilde C}{B}+\epsilon. 
\end{equation}
Since $\theta\geq 1$, taking the limit in $N\to+\infty$, then $\epsilon \to 0$ and finally $B\to+\infty$, we are done. 

\end{proof}

By adapting the proof of last result, we can also derive the local replacement lemma which is useful to treat the contribution in Dynkin's formula coming from the antisymmetric part of the dynamics. 

\begin{lem}[Global replacement lemma]\label{eq:global_RL}

\quad
Let $G:[0,T]\times[0,1]\to\mathbb R$ be a bounded function and let $\psi:\Omega_N\to\mathbb R$  be a function such that for $x\in\{\epsilon N,\cdots, N-1-\epsilon N\}$, $\tau_x\psi(\eta)$ is invariant for the change of variables $\eta\mapsto\eta^{z,z+1}$ for $z\in\{x+1,\cdots, x+\epsilon N\}$. Then,
for any $t\in[0,T]$ and $\theta\geq 1$, we have
\begin{equation*}
 \lim_{\epsilon\to0}\lim_{N\to\infty}\E_{\mu_N}\left|\int_0^t \frac{1}{N}\sum_{x=\epsilon N}^{N-1-\epsilon N}G(s,\tfrac xN)\tau_x\psi(\eta_s)(\xi^\alpha_x(\eta_s)-\overrightarrow{\xi}^{\alpha,\epsilon N}_x(\eta_s))ds\right|=0.
\end{equation*}
\end{lem}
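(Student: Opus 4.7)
The plan is to follow the scheme of the local replacement lemma (Lemma \ref{lem:local_RL}), handling the extra weighted sum over $x$ by combinatorial bookkeeping and invoking the invariance hypothesis on $\tau_x\psi$ at the symmetrization step. Fix a Lipschitz reference profile $\rho=(\rho^A,\rho^B,\rho^\emptyset)$ satisfying \eqref{assu_1} and \eqref{assu_2}, and consider the product measure $\nu^N_{\rho(\cdot)}$ of \eqref{eq:prod_measure}. Using the entropy bound $H(\mu_N|\nu^N_{\rho(\cdot)})\leq C_0 N$ from Lemma \ref{lem:entropy}, the elementary inequality $e^{|y|}\leq e^y+e^{-y}$ to drop the absolute value, and Feynman--Kac's formula, I would reduce the problem to bounding
\begin{equation*}
\frac{C_0}{B}+t\sup_f\Big\{\langle V_N,f\rangle_{\nu^N_{\rho(\cdot)}}+\tfrac{N}{B}\langle \mcb L_N \sqrt f,\sqrt f\rangle_{\nu^N_{\rho(\cdot)}}\Big\},
\end{equation*}
where $V_N(\eta)=\tfrac1N\sum_{x=\epsilon N}^{N-1-\epsilon N} G(s,\tfrac xN)\tau_x\psi(\eta)(\xi^\alpha_x(\eta)-\ora\xi^{\alpha,\epsilon N}_x(\eta))$ and the supremum is taken over densities $f$. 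Corollaries \ref{cor:DirichletLB} and \ref{cor:DirichletLR} then introduce the negative Dirichlet form $-\tfrac{N}{4B}\mcb D_N^B(\sqrt f,\nu^N_{\rho(\cdot)})$, plus error terms of order $C/B+C'N^{1-\theta}/B$ which remain bounded under $\theta\geq 1$.

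Next, I would expand $\xi^\alpha_x-\ora\xi^{\alpha,\epsilon N}_x=\tfrac{1}{\epsilon N}\sum_{y=x+2}^{x+\epsilon N}\sum_{z=x+1}^{y-1}(\xi^\alpha_z-\xi^\alpha_{z+1})$ and, in analogy with \eqref{eq:RLbulk2}--\eqref{eq:RLbulk3}, split $\langle V_N,f\rangle_{\nu^N_{\rho(\cdot)}}=J_N^{-}+J_N^{+}$, where $J_N^{-}$ contains $f(\eta)-f(\eta^{z,z+1})$ and $J_N^{+}$ contains $f(\eta)+f(\eta^{z,z+1})$. For $J_N^{-}$, factoring $f-f^{z,z+1}=(\sqrt f-\sqrt{f^{z,z+1}})(\sqrt f+\sqrt{f^{z,z+1}})$ and applying Young's inequality with $A=N/B$ produces two pieces. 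The first is bounded by $\tfrac{A}{4\epsilon N^2}$ times the combinatorial count of $(x,y)$-pairs reaching a given bond $(z,z+1)$, which is at most $(\epsilon N)^2/2$, yielding an overall multiplier $\tfrac{A\epsilon}{4}=\tfrac{N\epsilon}{4B}\leq\tfrac{N}{4B}$ on $\mcb D_N^B(\sqrt f,\nu^N_{\rho(\cdot)})$, so it is absorbed. The second is controlled via the uniform bounds on $|G|,|\tau_x\psi|,1/c_{z,z+1}$, the inequality $(\sqrt f+\sqrt{f^{z,z+1}})^2\leq 2(f+f^{z,z+1})$, Lemma \ref{lem:lem3.3}, and the total count $O(\epsilon^2 N^3)$, giving a bound of order $B\epsilon$.

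For $J_N^{+}$ I would exploit the invariance hypothesis: performing the change of variables $\eta\mapsto\eta^{z,z+1}$ on the half containing $f(\eta^{z,z+1})$, and noting that $z\in\{x+1,\dots,x+\epsilon N-1\}$ lies in the prescribed range so that $\tau_x\psi(\eta^{z,z+1})=\tau_x\psi(\eta)$, while $\xi^\alpha_z(\eta^{z,z+1})-\xi^\alpha_{z+1}(\eta^{z,z+1})=-(\xi^\alpha_z(\eta)-\xi^\alpha_{z+1}(\eta))$, one obtains
\begin{equation*}
J_N^{+}=\frac{1}{2\epsilon N^2}\sum_{x}\sum_{y,z}\int G(s,\tfrac xN)\tau_x\psi(\eta)(\xi^\alpha_z-\xi^\alpha_{z+1})\Big[1-\tfrac{\nu^N_{\rho(\cdot)}(\eta^{z,z+1})}{\nu^N_{\rho(\cdot)}(\eta)}\Big]f(\eta)\,d\nu^N_{\rho(\cdot)}.
\end{equation*}
The computation in \eqref{eq:diff_measures} combined with Lipschitz continuity of each $\rho^\alpha$ gives $|1-\nu^N_{\rho(\cdot)}(\eta^{z,z+1})/\nu^N_{\rho(\cdot)}(\eta)|\leq C/N$ uniformly; with the $O(\epsilon^2 N^3)$ combinatorial count and the prefactor $\tfrac{1}{2\epsilon N^2}$, this yields $|J_N^{+}|\leq C\epsilon$ uniformly in $f$.

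Collecting everything, the variational expression is bounded above by $C_0/B+C/B+O(\epsilon)+O(N^{1-\theta}/B)$. Sending first $N\to\infty$, then $\epsilon\to 0$, and finally $B\to\infty$ concludes the proof. The main obstacle is the Dirichlet-form absorption in the $J_N^{-}$ step: each bond $(z,z+1)$ is visited by $O((\epsilon N)^2)$ telescopic pairs, so the weight in front of $(\sqrt f-\sqrt{f^{z,z+1}})^2$ is $O(A\epsilon)$, forcing $A$ to be tuned of order $N/B$ and leaving no slack for additional factors; equally crucial is the invariance hypothesis on $\tau_x\psi$, for without it the symmetrization producing $1-\nu^N_{\rho(\cdot)}(\eta^{z,z+1})/\nu^N_{\rho(\cdot)}(\eta)$ in $J_N^{+}$ would fail and that piece would contribute an order-one term rather than the required $O(\epsilon)$.
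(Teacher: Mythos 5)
Your proposal is correct and follows exactly the route the paper intends: the paper gives no separate proof of the global lemma, stating only that it follows by adapting the local replacement lemma, and your adaptation (entropy/Feynman--Kac reduction, telescoping, Young's inequality with $A=N/B$ where the per-bond multiplicity $O((\epsilon N)^2)$ yields the absorbable factor $A\epsilon/4$, and the symmetrization step using the invariance of $\tau_x\psi$ under $\eta\mapsto\eta^{z,z+1}$ together with the $C/N$ bound from \eqref{eq:diff_measures}) is precisely that adaptation, with the correct bookkeeping. The resulting bound $C/B+O(B\epsilon)+O(\epsilon)+CN^{1-\theta}/B$ vanishes in the order $N\to\infty$, $\epsilon\to0$, $B\to\infty$, as in the paper's proof of Lemma \ref{lem:local_RL}.
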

The same result holds with the right average replaced with the left average as long as for $x\in\{\epsilon N,\cdots, N-1-\epsilon N\}$, $\tau_x\psi(\eta)$ is invariant for the change of variables $\eta\mapsto\eta^{z,z-1}$ for $z\in\{x-\epsilon N,\cdots, x-1\}$.

\subsection{Replacement lemma at the boundary}\label{sec:RLboundary}

We start this subsection by computing the adjoint $\mcb L^{L,\star}_N$ of the left boundary generator with respect to the measure $\nu^N_{\rho(\cdot)}$ given in \eqref{eq:prod_measure}. The computations for the right boundary generator are completely analogous and we omit them. Note that  by the definition of $\mcb L^L_N$ in~\eqref{eq:Lgen}, for any $f,g\in\mathbb L^2(\nu^N_{\rho(\cdot)})$ we have
  \begin{align}\label{eq:termsAB}
  & \int \mcb L^L_N f(\eta)g(\eta)d\nur\\
 =& \int  \big(\frac{1}{N^\delta}+\frac{\tilde\beta}{2N^\theta}\big)(\xi^A_1(\eta)r_{B}+\xi^B_1(\eta)r_{\emptyset}+\xi^\emptyset_1(\eta)r_{A})[f(\eta^{1,+})-f(\eta)]g(\eta)d\nur \label{eq:termA}\\ 
 & +\int \big(\frac{1}{N^\delta}-\frac{\tilde\beta}{2N^\theta}\big)(\xi^A_1(\eta)r_{\emptyset}+\xi^B_1(\eta)r_{A}+\xi^\emptyset_1(\eta)r_{B})[f(\eta^{1,-})-f(\eta)]g(\eta)d\nur \label{eq:termB}
 \end{align}
We look at~\eqref{eq:termA} first.   We perform the change of variables $\zeta=\eta^{1,+}$ (equivalently $\eta=\zeta^{1,-}$) in the term with $f(\eta^{1,+})g(\eta)$ to rewrite that term as 
% \begin{equation}
  \begin{align*}
& \int  \big(\frac{1}{N^\delta}+\frac{\tilde\beta}{2N^\theta}\big)(\xi^A_1(\zeta^{1,-})r_{B}+\xi^B_1(\zeta^{1,-})r_{\emptyset}+\xi^\emptyset_1(\zeta^{1,-})r_{A})  f(\zeta)g(\zeta^{1,-})\frac{\nur(\zeta^{1,-})}{\nur(\zeta)}d\nur\\
 =& \int \big(\frac{1}{N^\delta}+\frac{\tilde\beta}{2N^\theta}\big)(\xi^B_1(\zeta)r_{B}+\xi^\emptyset_1(\zeta)r_{\emptyset}+\xi^A_1(\zeta)r_{A}) f(\zeta)g(\zeta^{1,-})\frac{\nur(\zeta^{1,-})}{\nur(\zeta)}d\nur.
 \end{align*}
% \end{equation}
Now we split the integral by conditioning on having a particle of species $A,B$ or a hole at position $1/N$, so that last display is equal to 
% \begin{equation}
  \begin{align*}
& \sum_\alpha\int \mathbbm{1}_{\{\zeta(1)=\alpha\}} \big(\frac{1}{N^\delta}+\frac{\tilde\beta}{2N^\theta}\big)r_{\alpha}f(\zeta)g(\zeta^{1,-})\frac{\rho^{\alpha+2}(\tfrac 1N)}{\rho^\alpha(\tfrac 1N)}d\nur.
 \end{align*}
% \end{equation}
In a similar way we rewrite  the term $f(\eta^{1,-})g(\eta)$ in   \eqref{eq:termB} as
  \begin{align*}
& \sum_\alpha\int \mathbbm{1}_{\{\zeta(1)=\alpha\}} \big(\frac{1}{N^\delta}-\frac{\tilde\beta}{2N^\theta}\big)r_{\alpha}f(\zeta)g(\zeta^{1,+})\frac{\rho^{\alpha+1}(\tfrac 1N)}{\rho^\alpha(\tfrac 1N)}d\nur
 \end{align*}
 Putting together the last two terms we can write them as 
   \begin{align*}
  & \int  \big(\frac{1}{N^\delta}+\frac{\tilde\beta}{2N^\theta}\big)(\xi^A_1(\eta)\frac{\rho^\emptyset(\tfrac 1N)}{\rho^A(\tfrac 1N)}r_{A}+\xi^B_1(\eta)\frac{\rho^A(\tfrac 1N)}{\rho^B(\tfrac 1N)}r_{B}+\xi^\emptyset_1(\eta)\frac{\rho^B(\tfrac 1N)}{\rho^\emptyset(\tfrac 1N)}r_{\emptyset}) f(\eta)g(\eta^{1,-})d\nur\\ 
+& \int  \big(\frac{1}{N^\delta}-\frac{\tilde\beta}{2N^\theta}\big)(\xi^A_1(\eta)\frac{\rho^B(\tfrac 1N)}{\rho^A(\tfrac 1N)}r_{A}+\xi^B_1(\eta)\frac{\rho^\emptyset(\tfrac 1N)}{\rho^B(\tfrac 1N)}r_{B}+\xi^\emptyset_1(\eta)\frac{\rho^A(\tfrac 1N)}{\rho^\emptyset(\tfrac 1N)}r_{\emptyset}) f(\eta)g(\eta^{1,+})d\nur
\end{align*}
Assuming \eqref{assu_1} and \eqref{assu_2} we can rewrite the last two terms, for $N$ sufficiently big as 
  \begin{align*}
  & \int  \big(\frac{1}{N^\delta}+\frac{\tilde\beta}{2N^\theta}\big)(\xi^A_1(\eta)r_{\emptyset}+\xi^B_1(\eta)r_{A}+\xi^\emptyset_1(\eta)r_{B}) f(\eta)g(\eta^{1,-})d\nur\\ 
+& \int  \big(\frac{1}{N^\delta}-\frac{\tilde\beta}{2N^\theta}\big)(\xi^A_1(\eta)r_{B}+\xi^B_1(\eta)r_{\emptyset}+\xi^\emptyset_1(\eta)r_{A}) f(\eta)g(\eta^{1,+})d\nur.
\end{align*}
Putting now all the terms together we get
 \begin{align*}
  \int \mcb L^L_N f(\eta)&g(\eta)d\nur\\
&=   \int  \big(\frac{1}{N^\delta}+\frac{\tilde\beta}{2N^\theta}\big)(\xi^A_1(\eta)r_{\emptyset}+\xi^B_1(\eta)r_{A}+\xi^\emptyset_1(\eta)r_{B}) f(\eta)g(\eta^{1,-})d\nur\\ 
&+ \int  \big(\frac{1}{N^\delta}-\frac{\tilde\beta}{2N^\theta}\big)(\xi^A_1(\eta)r_{B}+\xi^B_1(\eta)r_{\emptyset}+\xi^\emptyset_1(\eta)r_{A}) f(\eta)g(\eta^{1,+})d\nur\\
&- \int  \big(\frac{1}{N^\delta}+\frac{\tilde\beta}{2N^\theta}\big)(\xi^A_1(\eta)r_{B}+\xi^B_1(\eta)r_{\emptyset}+\xi^\emptyset_1(\eta)r_{A}) f(\eta)g(\eta)d\nur\\
&- \int  \big(\frac{1}{N^\delta}-\frac{\tilde\beta}{2N^\theta}\big)(\xi^A_1(\eta)r_{\emptyset}+\xi^B_1(\eta)r_{A}+\xi^\emptyset_1(\eta)r_{B}) f(\eta)g(\eta)d\nur
\end{align*}
for $N$ sufficiently big.  From this, we obtain  the following expression for the adjoint of $\mcb L^{L}_N$:
\begin{equation}\label{eq:Lgen_adj}
 \begin{aligned}
  \mcb L^{L,\star}_N g(\eta)=& \big(\frac{1}{N^\delta}-\frac{\tilde\beta}{2N^\theta}\big)(\xi^A_1(\eta)r_{B}+\xi^B_1(\eta)r_{\emptyset}+\xi^\emptyset_1(\eta)r_{A})[g(\eta^{1,+})-g(\eta)]\\
  +&\big(\frac{1}{N^\delta}+\frac{\tilde\beta}{2N^\theta}\big)(\xi^A_1(\eta)r_{\emptyset}+\xi^B_1(\eta)r_{A}+\xi^\emptyset_1(\eta)r_{B})[g(\eta^{1,-})-g(\eta)]\\
  +&\frac{\tilde\beta}{N^\theta}[\xi^A_1(\eta)(r_\emptyset-r_{B})+\xi^B_1(\eta)(r_A-r_{\emptyset})+\xi^\emptyset_1(\eta)(r_B-r_{A})]g(\eta).
 \end{aligned}
\end{equation}

Now we are ready to prove the replacement needed for the Dirichlet regime, i.e. for  item a) of Theorem 
\ref{th:hyd_ssep}.
\begin{lem} \label{lem:RL_boundary}
 For $t\in[0,T]$, $\theta\geq 1>\delta$ and for $\alpha\in\{A,B,\emptyset\}$, we have
 \begin{equation}\label{eq:RLleft}
 \lim_{N\to\infty}\E_{\mu_N}\left[\left|\int_0^t(r_{\alpha}-\xi^\alpha_1(\eta_s))ds\right|\right]=0.
 \end{equation}
The same holds replacing $r_\alpha$ with $\tilde r_\alpha$ and  $\xi^\alpha_1(\eta_s)$ with $ \xi^\alpha_{N-1}(\eta_s)$.
\end{lem}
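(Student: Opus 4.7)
The plan is to follow the entropy method combined with the explicit expression for the adjoint $\mcb L^{L,\star}_N$ already derived in \eqref{eq:Lgen_adj}. Fix a Lipschitz reference profile $\rho(\cdot)$ satisfying \eqref{assu_1} and \eqref{assu_2} and let $\nu^N_{\rho(\cdot)}$ be the associated product measure. By the standard entropy inequality, Feynman--Kac, and Lemma \ref{lem:entropy}, for any $B>0$ the expectation in \eqref{eq:RLleft} is bounded from above by
\begin{equation*}
\frac{C_0}{B}+\frac{t}{BN}\sup_f\Big\{BN\,\langle r_\alpha-\xi^\alpha_1,f\rangle_{\nu^N_{\rho(\cdot)}}+N^2\langle\mcb L_N\sqrt f,\sqrt f\rangle_{\nu^N_{\rho(\cdot)}}\Big\},
\end{equation*}
the supremum running over densities $f$ with respect to $\nu^N_{\rho(\cdot)}$. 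Corollaries \ref{cor:DirichletLB} and \ref{cor:DirichletLR} will produce, in particular, the negative piece $-\tfrac{N^2}{2}\mcb D^L_N(\sqrt f,\nu^N_{\rho(\cdot)})$ together with error terms $O(N)+O(N^{2-\theta})$ that remain harmless because $\theta\geq 1$.

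The heart of the argument is the pointwise identity
\begin{equation*}
\mcb L^{L,\star}_N\xi^\alpha_1(\eta)=\frac{1}{N^\delta}\bigl(r_\alpha-\xi^\alpha_1(\eta)\bigr)+O(N^{-\theta}),
\end{equation*}
which I would obtain by plugging $g=\xi^\alpha_1$ into \eqref{eq:Lgen_adj}, using \eqref{confusion} to compute $g(\eta^{1,\pm})-g(\eta)$, and checking the three cases $\eta(1)\in\{A,B,\emptyset\}$ by hand. Rearranging gives $r_\alpha-\xi^\alpha_1=N^\delta\mcb L^{L,\star}_N\xi^\alpha_1+O(N^{\delta-\theta})$, and transferring the adjoint yields
\begin{equation*}
\langle r_\alpha-\xi^\alpha_1,f\rangle_{\nu^N_{\rho(\cdot)}}=N^\delta\,\langle\xi^\alpha_1,\mcb L^L_N f\rangle_{\nu^N_{\rho(\cdot)}}+O(N^{\delta-\theta}).
\end{equation*}
The remaining inner product is then controlled by writing $f(\eta^{1,\pm})-f(\eta)=(\sqrt{f(\eta^{1,\pm})}-\sqrt{f(\eta)})(\sqrt{f(\eta^{1,\pm})}+\sqrt{f(\eta)})$, applying Young's inequality with a weight $A>0$, and using Lemma \ref{lem:lem3.3} together with the bound $c_1^\pm\leq C/N^\delta$ (valid since $\theta\geq\delta$), which gives an estimate of the form
\begin{equation*}
\bigl|\langle\xi^\alpha_1,\mcb L^L_N f\rangle_{\nu^N_{\rho(\cdot)}}\bigr|\leq A\,\mcb D^L_N(\sqrt f,\nu^N_{\rho(\cdot)})+\frac{C}{A\,N^\delta}.
\end{equation*}

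The last step is a careful scale tuning: choosing $A=N^{1-\delta}/(2B)$ lets the Young contribution $B N^{1+\delta}A\,\mcb D^L_N$ be absorbed exactly by the $-\tfrac{N^2}{2}\mcb D^L_N$ coming from Corollary \ref{cor:DirichletLR}, while the residual is of order $B^2 N^\delta$. After dividing by $BN$, the overall bound reduces to $C/B+O(B\,N^{\delta-1})+O(N^{1-\theta})$, which vanishes by first letting $N\to\infty$ (this is where $\delta<1\leq\theta$ is crucially used) and then $B\to\infty$. The right boundary statement is proved in the same way, using the analogous adjoint of $\mcb L^R_N$. I expect the main obstacle to be precisely this final balancing: the assumption $\delta<1$ is exactly what makes $N^{1-\delta}\to\infty$, so that the Young parameter $A$ can be chosen large enough to fully absorb the Dirichlet form while keeping the residual $B^2 N^\delta$ sub-leading with respect to $BN$; without $\delta<1$ the adjoint mechanism of \eqref{eq:Lgen_adj} is too weak to close the estimate.
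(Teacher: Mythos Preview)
Your proposal is correct and follows essentially the same route as the paper: the key adjoint identity $\mcb L^{L,\star}_N\xi^\alpha_1=N^{-\delta}(r_\alpha-\xi^\alpha_1)+O(N^{-\theta})$, the entropy/Feynman--Kac reduction to a variational problem, the transfer of the adjoint, and the Young-inequality splitting with weight $A\sim N^{1-\delta}/B$ to absorb the left Dirichlet form are exactly the paper's steps, leading to the same final bound $C/B+O(BN^{\delta-1})+O(N^{1-\theta})$.
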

\begin{proof}
We present the proof for the case $\alpha=A$ and for the left boundary, but for the other cases it is analogous. 
First we make the following key observation. By~\eqref{eq:Lgen_adj}, we have
\begin{equation*}
 \begin{aligned}
\mcb L^{L,\star}_N \xi^A_1(\eta)=\frac{1}{N^\delta}(r_A-\xi^A_1(\eta))+\frac{\tilde \beta}{2N^\theta}[r_A(\xi_1^B(\eta)-\xi_1^\emptyset(\eta))+(r_\emptyset-r_B)\xi_1^A(\eta)].
 \end{aligned}
\end{equation*}

As a consequence, and since we assumed $\theta\geq 1>\delta$, the second term on the right hand-side of last display{, multiplied by $N^\delta$,} goes to $0$ as $N\to\infty$ and we are left with the estimate of  
\begin{equation*}
 \E_{\mu_N}\left[\left|\int_0^t N^\delta\mcb L^{L,\star}_N \xi^A_1(\eta_s)ds\right|\right].
\end{equation*}
Now we proceed as in the proof of the replacement lemma at the bulk: by the entropy  and Jensen's inequalities, and Feynman--Kac formula the last expression is bounded above by
\begin{equation*}
 \frac{C}{N}+t\sup_{f}\left\{\int N^\delta\mcb L^{L,\star}_N \xi^A_1(\eta)f(\eta)d\nur+\frac{N}{B}\langle \mcb L_N\sqrt f,\sqrt f\rangle_{\nur}\right\}.
\end{equation*}
We apply Corollaries~\ref{cor:DirichletLB} and~\ref{cor:DirichletLR} to bound last display from above by
\begin{equation*}
\begin{aligned}
 &\frac{C}{N}+t\sup_{f}\Big\{\int N^\delta\mcb L^{L,\star}_N \xi^A_1(\eta)f(\eta)d\nur-\frac{N}{4B}\mcb D^B_N(\sqrt f,\nur)+\frac{C}{B}\\
 &\qquad\qquad\qquad\qquad\qquad+\frac{CN}{N^\theta}-\frac{N}{2B}\mcb D^L_N(\sqrt f,\nur)-\frac{N}{2B}\mcb D^R_N(\sqrt f,\nur)\Big\}.
\end{aligned}
 \end{equation*}
Using the definition of the adjoint $\mcb L^{L,\star}_N$ we can bound the integral as follows
%\begin{equation}
 \begin{align}
 \left|\int N^\delta\mcb L^{L,\star}_N \xi^A_1(\eta)f(\eta)d\nur(\eta)\right|=&\Big|\int N^\delta \xi^A_1(\eta)\mcb L^{L}_Nf(\eta)d\nur \Big|\\
 \leq& \left|\int N^\delta c^{+}_1(\eta)\xi^A_1(\eta)[f(\eta^{1,+})-f(\eta)]d\nur(\eta)\right|\label{eq:term7}\\
  +&\left|\int N^\delta c^{-}_1(\eta)\xi^A_1(\eta)[f(\eta^{1,-})-f(\eta)]d\nur(\eta)\right|.\label{eq:term8}
 \end{align}

The strategies to bound the terms~\eqref{eq:term7} and~\eqref{eq:term8} are identical, so we write down only the proof for the first term. As we did in the bulk, we bound using Young's inequality:
\begin{align}
 \eqref{eq:term7}\leq& \frac{N^\delta A}{2} \left|\int c^{+}_1(\eta)[\sqrt f(\eta^{1,+})-\sqrt f(\eta)]^2d\nur(\eta)\right|\label{eq:term9}\\
+& \frac{N^\delta}{2A}\left|\int c^{+}_1(\eta)\xi^A_1(\eta)[\sqrt f(\eta^{1,+})+\sqrt f(\eta)]^2 d\nur(\eta)\label{eq:term10}\right|.
 \end{align}
If we choose $A=N/BN^\delta$,~\eqref{eq:term9} is killed by the Dirichlet form $\mcb D^L_N(\sqrt f,\nur)$ in the supremum, while~\eqref{eq:term10} is bounded by
$C \frac{BN^{2\delta}}{2N}\max{\{\tfrac{1}{N^\delta}, \tfrac{1}{N^\theta}\}}\leq CB\tfrac{N^\delta}{N}$, as long as $\int f(\eta^{1,+})d\nur(\eta)<\infty$, which is true since $f$ is a density.
Putting together all the estimates we get
\begin{equation}
 \E_{\mu_N}\left|\int_0^t N^\delta\mcb L^{L,\star}_N \xi^A_1(\eta)ds\right|\lesssim\frac{1}{N}+\frac{1}{B}+ \frac{BN^\delta}{N}+ \frac{N}{N^\theta},
\end{equation}
which goes to zero as $N\to\infty$ and $B\to\infty$.
 \end{proof}

\subsection{Energy estimate}\label{sec:energy}

To prove that the boundary terms of the hydrodynamic equation are well defined and that the space derivatives are well defined in the weak sense, we need to show that the limiting densities of the empirical measure belong to the Sobolev Space $L^2([0,T]\times \mathbb H)$ where $\mathbb H={\mcb H}_1(0,1)^3$, or equivalently that the density profile $\rho_t$ belongs to $\mathbb H$, almost surely in $t\in[0,T]$. 
%We can suppose w.l.o.g. that $r_\alpha_0<r_1^\alpha$ for any $\alpha$ \note{not sure we need to specify this}.
\begin{prop}
 Let $\mathbb Q^\star$ be a limit point of the sequence of measures $\{\mathbb Q_N\}_N$. Then, the measure $\mathbb Q^\star$ is concentrated on paths $\rho(t,u)du$ such that $\rho\in L^2([0,T]\times \mathbb H)$, i.e.
 \begin{equation*}
 \mathbb{Q}^\star\big(\pi\in{\mcb D}([0,T],{\mcb M}^3):\rho\in L^2([0,T]\times\mathbb H)\big)=1.
\end{equation*}
\end{prop}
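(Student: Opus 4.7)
The strategy is the classical energy estimate via duality \`a la Kipnis--Landim: for each $\alpha\in\{A,B,\emptyset\}$ exhibit a constant $K>0$ such that
\begin{equation*}
\mathbb{E}_{\mathbb{Q}^\star}\!\left[\sup_{G\in C^{0,1}_c([0,T]\times(0,1))}\!\left\{\int_0^T\!\!\langle\rho^\alpha_s,\nabla G_s\rangle\,ds-K\!\int_0^T\!\!\int_0^1 G_s(u)^2\,du\,ds\right\}\right]<\infty,
\end{equation*}
which by the Riesz representation theorem identifies a weak derivative $\partial_u\rho^\alpha\in L^2([0,T]\times(0,1))$, hence $\rho^\alpha\in L^2([0,T],\mcb H^1(0,1))$. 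Doing this for each of the three species gives $\rho\in L^2([0,T]\times\mathbb H)$ almost surely under $\mathbb Q^\star$.

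First I would fix a dense countable subset $\{G_k\}_{k\geq 1}$ of $C^{0,1}_c([0,T]\times(0,1))$ and a Lipschitz profile $\rho(\cdot)$ satisfying \eqref{assu_1} and \eqref{assu_2}, so that Lemmas~\ref{lem:lem3.2}, \ref{lem:lem3.3} and Corollaries~\ref{cor:DirichletLB}, \ref{cor:DirichletLR} apply to $\nu^N_{\rho(\cdot)}$. Introducing the discrete functional
\begin{equation*}
I_N(G,\eta_\cdot):=\int_0^T\frac{1}{N}\sum_{x=1}^{N-2}\nabla^+_N G_s(\tfrac{x}{N})\,\xi^\alpha_x(\eta_s)\,ds,
\end{equation*}
the entropy inequality combined with the elementary bound $\max_{1\leq k\leq m}a_k\leq \tfrac{1}{N}\log\sum_{k=1}^m e^{Na_k}$ and Lemma~\ref{lem:entropy} reduces the problem to controlling
\begin{equation*}
\frac{1}{N}\log\mathbb{E}_{\nu^N_{\rho(\cdot)}}\!\left[\exp\!\Big(N I_N(G_k,\eta_\cdot)-NK\|G_k\|_{L^2([0,T]\times[0,1])}^2\Big)\right].
\end{equation*}
By the Feynman--Kac formula applied to the accelerated generator $N^2\mcb L_N$, this is bounded by a time integral of the principal eigenvalue
\begin{equation*}
\Lambda^N_s(G):=\sup_f\!\left\{\sum_{x=1}^{N-2}\nabla^+_N G_s(\tfrac xN)\!\int\!\xi^\alpha_x\,f\,d\nu^N_{\rho(\cdot)}-K\!\int_0^1 G_s(u)^2 du +\langle N^2\mcb L_N\sqrt f,\sqrt f\rangle_{\nu^N_{\rho(\cdot)}}\right\}\!,
\end{equation*}
the supremum being over densities with respect to $\nu^N_{\rho(\cdot)}$.

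The core estimate is obtained by a summation-by-parts on the first term, $\sum_x\nabla^+_N G_s(\tfrac xN)\xi^\alpha_x=-\sum_x G_s(\tfrac xN)\cdot N(\xi^\alpha_x-\xi^\alpha_{x-1})+\text{boundary}$, followed by the symmetrization trick $\int(\xi^\alpha_x-\xi^\alpha_{x-1})f\,d\nu^N_{\rho(\cdot)}=\tfrac12\int(\xi^\alpha_x-\xi^\alpha_{x-1})[f(\eta)-f(\eta^{x-1,x})]d\nu^N_{\rho(\cdot)}+O(1/N)$, the correction coming from the non-reversibility of $\nu^N_{\rho(\cdot)}$ and handled exactly as in \eqref{eq:diff_measures}. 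A Young inequality with parameter $A=N/K$ then splits each bond contribution into (i) a term of the form $\frac{A}{4}c_{x-1,x}(\eta)[\sqrt{f(\eta^{x-1,x})}-\sqrt{f(\eta)}]^2$, whose sum over $x$ is absorbed by $-\tfrac14\mcb D^B_N(\sqrt f,\nu^N_{\rho(\cdot)})$ supplied by Corollary~\ref{cor:DirichletLB}, and (ii) a term bounded by $\frac{C}{NA}\,G_s(\tfrac xN)^2[\sqrt{f(\eta^{x-1,x})}+\sqrt{f(\eta)}]^2$, whose integral is controlled thanks to Lemma~\ref{lem:lem3.3} and which, after summation over $x$, produces exactly a constant multiple of $\|G_s\|_{L^2}^2$. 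Choosing $K$ large enough to dominate this constant yields $\Lambda^N_s(G)\leq C$ uniformly in $s$ and $N$. The weakly asymmetric bulk contribution is controlled by the same mechanism since the correction to $c_{x-1,x}$ is $O(1/N)$, and the boundary generators give at most $O(N^{1-\theta})+O(N^{1-\delta})$, again absorbed by $\mcb D^L_N,\mcb D^R_N$ via Corollary~\ref{cor:DirichletLR} in the relevant regimes.

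Finally, passing to the limit $N\to\infty$ on the subsequence that converges to $\mathbb Q^\star$ uses Portmanteau's theorem (after a standard regularization of $\nabla^+_N G$ by $\nabla G$ and of $\xi^\alpha_x$ by a spatial average, cf.\ \eqref{eq:passage_rho} and Lemmas~\ref{lem:local_RL}--\ref{eq:global_RL} to pass from the particle system to the empirical density), followed by monotone convergence in $m\to\infty$ to replace $\max_{k\leq m}$ by the supremum over the dense countable family and hence over all of $C^{0,1}_c([0,T]\times(0,1))$. The main technical obstacle I expect is precisely the control of the non-reversibility of $\nu^N_{\rho(\cdot)}$ (the ratios $\nu^N_{\rho(\cdot)}(\eta^{x,x+1})/\nu^N_{\rho(\cdot)}(\eta)$ not being $1$) and of the weakly asymmetric rates $c_{x,x+1}$: both force additional $O(1/N)$ corrections in the symmetrization, which must be absorbed without destroying the $L^2(G)$ bound; this is handled as in \eqref{eq:corol:bulk}--\eqref{eq:diff_measures} by exploiting the Lipschitz regularity of the reference profile.
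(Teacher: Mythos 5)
Your proposal is essentially the paper's own argument: entropy inequality plus the $\max$--$\exp$ trick over a countable dense family of test functions, Feynman--Kac, summation by parts followed by symmetrization via the exchange $\eta\mapsto\eta^{x,x+1}$, Young's inequality with absorption into the bulk Dirichlet form of Corollary~\ref{cor:DirichletLB}, control of the ratios $\nu^N_{\rho(\cdot)}(\eta^{x,x+1})/\nu^N_{\rho(\cdot)}(\eta)$ through the Lipschitz reference profile as in \eqref{eq:diff_measures}, boundary generators handled by Corollary~\ref{cor:DirichletLR}, and finally Riesz representation to get $\partial_u\rho^\alpha\in L^2$. Two bookkeeping points should be fixed but do not change the scheme: in your $\Lambda^N_s(G)$ the penalization must be $-NK\|G_s\|^2_{L^2}$ (equivalently, divide the linear and generator terms by $N$), after which the Young step with the correctly scaled parameter yields an error of order $\|G_s\|^2_{L^2}$ absorbed by taking $K$ large; and, thanks to the choice \eqref{assu_2} of the reference profile, Corollary~\ref{cor:DirichletLR} leaves only an $O(N^{1-\theta})$ boundary error, bounded since $\theta\geq1$ --- there is no $O(N^{1-\delta})$ term to ``absorb'', which matters because the proposition also covers $\delta<1$.
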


This follows from the next two lemmas together with Riesz's representation theorem. The strategy follows~\cite{FNG13},\cite{FNG13bis} and \cite{BMNS17}.
\begin{lem}
 Fix $\alpha\in\{A,B,\emptyset\}.$ For all $\theta\geq 1$ and $\theta\geq \delta$, there is a positive constant $\kappa>0$ such that
\begin{equation*}
 \E_{\mathbb Q^\star}\left[\sup_H\left\{\int_0^T \int_0^1 \partial_u H(s,u)\rho^\alpha_s(u) duds - \kappa\int_0^T \int_0^1 H(s,u)^2 duds\right\}\right]<\infty,
\end{equation*}
where the supremum is taken over all functions $H\in C^{0,2}_c([0, T] \times (0, 1))$.
\end{lem}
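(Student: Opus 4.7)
The proof is a standard Sobolev-type energy estimate in the spirit of \cite{FNG13,BMNS17}, combining the entropy inequality with Feynman--Kac and the Dirichlet form bounds of Corollaries~\ref{cor:DirichletLB} and \ref{cor:DirichletLR}.

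\textbf{Reduction.} Let $\{H_k\}_{k\in\N}$ be a countable dense family in $C^{0,2}_c([0,T]\times(0,1))$. Setting
$$X(\rho,H)=\int_0^T\!\!\int_0^1 \partial_u H(s,u)\rho^\alpha_s(u)\,du\,ds-\kappa\int_0^T\!\!\int_0^1 H(s,u)^2\,du\,ds,$$
by monotone convergence it suffices to bound $\E_{\mathbb{Q}^\star}[\sup_{k\leq n}X(\rho,H_k)]$ uniformly in $n$. For fixed $n$, after replacing $\rho^\alpha_s(u)du$ by $\pi^{N,\alpha}_s(du)$ using \eqref{eq:passage_rho} and the weak convergence of empirical measures, and $\partial_u H_k$ by the discrete gradient $\nabla^+_N H_k$ (errors of order $1/N$), Portmanteau reduces the task to proving
$$\limsup_{N\to\infty}\E_{\mu_N}\Big[\sup_{k\leq n}\Big\{\int_0^T\frac{1}{N}\sum_{x=1}^{N-1}\nabla^+_N H_k(s,\tfrac{x}{N})\xi^\alpha_x(\eta_s)\,ds-\kappa\int_0^T\!\!\int_0^1 H_k^2\,du\,ds\Big\}\Big]\leq C,$$
with $C$ independent of $n$.

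\textbf{Entropy and Feynman--Kac.} Using $e^{\max_{k\leq n} X_k}\leq \sum_{k\leq n} e^{X_k}$, the entropy inequality with reference $\nur$ (whose profile is Lipschitz and satisfies \eqref{assu_1}, \eqref{assu_2}), Lemma~\ref{lem:entropy} and Feynman--Kac, the above expectation is bounded by
$$C_0+\frac{\log n}{N}+\sup_{k\leq n}\Big\{T\sup_f\Big[\langle V_k,f\rangle_{\nur}+N\langle\mcb L_N\sqrt f,\sqrt f\rangle_{\nur}\Big]-\kappa\int_0^T\!\!\int_0^1 H_k^2\,du\,ds\Big\},$$
where $V_k(s,\eta)=\frac{1}{N}\sum_x \nabla^+_N H_k(s,\tfrac{x}{N})\xi^\alpha_x(\eta)$ and $f$ ranges over densities with respect to $\nur$. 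Corollaries~\ref{cor:DirichletLB} and \ref{cor:DirichletLR} convert the generator term into $-\tfrac{N}{4}\mcb D_N^B(\sqrt f,\nur)-\tfrac{N}{2}\mcb D_N^L(\sqrt f,\nur)-\tfrac{N}{2}\mcb D_N^R(\sqrt f,\nur)+O(1)$, the last $O(1)$ using $\theta\geq 1$.

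\textbf{Absorbing $\langle V_k,f\rangle$ into the bulk Dirichlet form.} Since $H_k$ has compact support in $(0,1)$, for $N$ large enough discrete summation by parts yields
$$\frac{1}{N}\sum_x \nabla^+_N H_k(\tfrac{x}{N})\xi^\alpha_x=-\sum_x H_k(\tfrac{x}{N})\big[\xi^\alpha_x-\xi^\alpha_{x-1}\big].$$
Performing the change of variables $\eta\mapsto\eta^{x-1,x}$ on the $\xi^\alpha_{x-1}$ term, rewriting $f(\eta)-f(\eta^{x-1,x})\tfrac{\nur(\eta^{x-1,x})}{\nur(\eta)}$ as $f(\eta)-f(\eta^{x-1,x})$ plus an $O(1/N)$ correction (the latter controlled by Lemma~\ref{lem:lem3.3} and the Lipschitz continuity of $\rho(\cdot)$), and factorising $f-f\circ\tau=(\sqrt f-\sqrt{f\circ\tau})(\sqrt f+\sqrt{f\circ\tau})$, Young's inequality with parameter $\gamma>0$ gives
$$\big|\langle V_k,f\rangle_{\nur}\big|\leq \frac{CN}{\gamma}\int_0^1 H_k(s,u)^2\,du+\gamma\,\mcb D_N^B(\sqrt f,\nur)+O(1).$$
Choosing $\gamma=N/4$ cancels the bulk Dirichlet form and leaves $4C\int H_k^2+O(1)$. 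Taking $\kappa=4C$ makes the whole expression finite uniformly in $k$; since $\log n/N\to 0$ as $N\to\infty$, this yields the desired uniform-in-$n$ bound and, by monotone convergence, the claim of the lemma.

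\textbf{Main obstacle.} The principal technical step is the third one: handling all $x$ in the sum simultaneously, keeping the factor $\nur(\eta^{x-1,x})/\nur(\eta)=1+O(1/N)$ under control, and tuning $\gamma$ so that the bulk Dirichlet form exactly absorbs the Young contribution while the residual quadratic term in $H_k$ is precisely what the fixed $\kappa$ can compensate. A secondary subtlety is the Portmanteau reduction: one must first mollify $H\mapsto X(\rho,H)$ using the averaging operators $\ora\iota_\epsilon,\ola\iota_\epsilon$ from \eqref{iotas} so that the finite sup becomes weakly continuous on ${\mcb D}([0,T],{\mcb M}^3)$, which is routine and introduces only errors that vanish as $\epsilon\to 0$ by \eqref{eq:passage_rho}.
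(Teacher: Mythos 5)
Your proposal follows essentially the same route as the paper: reduction to a finite dense family via weak convergence of $\mathbb Q^N$, the entropy inequality with $e^{\max_k a_k}\le\sum_k e^{a_k}$, Feynman--Kac, the Dirichlet-form bounds of Corollaries~\ref{cor:DirichletLB} and~\ref{cor:DirichletLR} (with $\theta\ge 1$ controlling the boundary error), discrete summation by parts, absorption of the gradient term into the bulk Dirichlet form by Young's inequality with parameter of order $N$, control of the measure-ratio correction of order $1/N$ via the Lipschitz profile and Lemma~\ref{lem:lem3.3}, and finally a large choice of $\kappa$. The only cosmetic differences are that the paper isolates the exponential-moment bound as a second lemma and splits the summed-by-parts term symmetrically rather than changing variables directly on the $\xi^\alpha_{x-1}$ term, that your $\iota_\epsilon$-mollification is unnecessary since $\partial_u H$ is continuous, and that your residual correction is really of order $\|H_k(s,\cdot)\|_{L^1}$ rather than a uniform $O(1)$, which is harmless because it is absorbed by slightly enlarging $\kappa$.
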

\begin{proof}
 Here we want to show that the linear functional
 \begin{equation*}
  \int_0^T \int_0^1 \partial_u H(t,u)\rho^\alpha_t(u) dtdu
 \end{equation*}
 is $\mathbb Q^\star$-almost surely continuous. If we consider a dense sequence $\{H_n\}_{n\geq1}$ in $C^{0,2}_c([0, T] \times (0, 1))$, it is sufficient to prove that, for any $\ell\geq1$,
 \begin{equation*}
 \E_{\mathbb Q^\star}\left[\max_{1\leq i\leq\ell}\left\{\int_0^T \int_0^1 \partial_u H_i(s,u)\rho^\alpha_s(u) duds - \kappa\int_0^T \int_0^1 H_i(s,u)^2 duds\right\}\right]<C,  
 \end{equation*}
for some constant $C>0$ independent from $\ell$. Since $\mathbb Q_N$ converges to $\mathbb Q^\star$, this is equivalent to showing that
\begin{equation}\label{eq:EE1}
 \lim_{N\to\infty}\E_{\mu_N}\left[\max_{1\leq i\leq\ell}\left\{\int_0^T  \langle\partial_u H_i(s,\cdot),\pi^{N,\alpha}_s\rangle  - \kappa \| H_i(s,\cdot)\|_{L^2(0,1)}^2ds\right\}\right]<C.
\end{equation}
As we did above in the replacement lemmas, we use entropy  and Jensen's inequality and the fact that $\exp \{\max_{1\leq i \leq \ell} a_i\}\leq \sum_{1\leq i\leq\ell} e^{a_i}$ to bound the expected value in~\eqref{eq:EE1} with
\begin{equation*}
 \frac{H(\mu_N|\nur)}{N}+\frac{1}{N}\log\sum_{1\leq i\leq \ell}\E_{\nur}\left[\exp\left\{N\int_0^T  \langle\partial_u H_i(s,\cdot),\pi^{N,\alpha}_s\rangle  - \kappa N \| H_i(s,\cdot)\|_{L^2(0,1)}^2 ds\right\}\right].
\end{equation*}
By Lemma~\ref{lem:entropy}, the first term is bounded by a constant $C_0$. For the second term it is enough to show that
\begin{equation*}
 \limsup_{N\to\infty}\frac{1}{N}\log\E_{\nur}\left[\exp\left\{N\int_0^T  \langle\partial_u H(s,\cdot),\pi^{N,\alpha}_s\rangle  - \kappa N \| H(s,\cdot)\|_{L^2(0,1)}^2 ds\right\}\right]<\tilde CT,
\end{equation*}
for a fixed $H$ and for a constant $\tilde C$ independent of $H$. The definition of empirical measure leads us to the following lemma.
\end{proof}
\begin{lem}
 Let $\rho$ be a profile satisfying \eqref{assu_1} and \eqref{assu_2}. For all $\theta\geq 1$ and $\theta\geq \delta$, there exists a positive constant $\kappa>0$ such that
\begin{equation*}
 \limsup_{N\to\infty}\frac1N \log\E_{\nur}\left[\exp \left\{ \int_0^T\sum_{x=1}^{N-1}\xi^\alpha_x(\eta_s) \partial_x H(s,\tfrac{x}{N}) - \kappa N \|H(s,\cdot)\|^2_{L^2(0,1)}ds\right\}\right]< CT.
\end{equation*}
\end{lem}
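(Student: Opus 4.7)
The strategy is the standard Feynman--Kac plus variational estimate, reducing everything to the Dirichlet form controlled by Corollaries \ref{cor:DirichletLB} and \ref{cor:DirichletLR}. First I would apply Feynman--Kac's formula to the accelerated process (with generator $N^2\mcb L_N$), which gives
\begin{equation*}
\frac{1}{N}\log\E_{\nur}\left[\exp\left\{\int_0^T\!\!\Big(\sum_{x=1}^{N-1}\xi^\alpha_x(\eta_s)\partial_u H(s,\tfrac xN)-\kappa N\|H(s,\cdot)\|_{L^2}^2\Big)ds\right\}\right]
\leq \int_0^T\!\! \tfrac{1}{N}\lambda_N(s)\,ds,
\end{equation*}
where $\lambda_N(s)$ is the supremum over densities $f$ with respect to $\nur$ of
$\sum_x \partial_u H(s,x/N)\int\xi^\alpha_x f\,d\nur - \kappa N\|H(s,\cdot)\|^2_{L^2} + N^2\langle \mcb L_N\sqrt f,\sqrt f\rangle_{\nur}$.
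By Corollaries \ref{cor:DirichletLB} and \ref{cor:DirichletLR} the Dirichlet form contribution is dominated by $-\tfrac14 N^2 \mcb D_N^B(\sqrt f,\nur)+O(N+N^{2-\theta})$, and since $\theta\ge 1$ this error is $O(N)$. It thus suffices to show that the linear-in-$H$ piece is dominated by $\tfrac14 N^2\mcb D_N^B(\sqrt f,\nur)+\kappa N\|H(s,\cdot)\|^2_{L^2}+O(N)$ uniformly in $f$.

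For the linear piece I would use the compact support of $H$ together with $\partial_u H(s,x/N)=\nabla_N^+ H(s,x/N)+O(1/N)$ and perform discrete summation by parts to rewrite
\begin{equation*}
\sum_{x=1}^{N-1}\xi^\alpha_x(\eta)\,\partial_u H(s,\tfrac xN)
= -N\sum_{x}H(s,\tfrac xN)\bigl[\xi^\alpha_x(\eta)-\xi^\alpha_{x-1}(\eta)\bigr]+O(\|H(s,\cdot)\|_\infty).
\end{equation*}
Setting $g_x(\eta)=\xi^\alpha_x(\eta)-\xi^\alpha_{x-1}(\eta)$, one has $g_x(\eta^{x-1,x})=-g_x(\eta)$. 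Performing the change of variables $\eta\mapsto \eta^{x-1,x}$ in half of the integral $\int g_x f\,d\nur$ and using that $\nur(\eta^{x-1,x})/\nur(\eta)=1+O(1/N)$ (which follows from the Lipschitz assumption on $\rho$, exactly as in \eqref{eq:diff_measures}), one obtains $\int g_x f\,d\nur=\tfrac{1}{2}\int g_x[f-f(\eta^{x-1,x})]\,d\nur+O(N^{-1})$ uniformly in $x$.

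To conclude, I would factorise $f-f(\eta^{x-1,x})=(\sqrt f-\sqrt{f(\eta^{x-1,x})})(\sqrt f+\sqrt{f(\eta^{x-1,x})})$ and apply Cauchy--Schwarz to get $|\int g_x[f-f(\eta^{x-1,x})]\,d\nur|\le D_x^{1/2}E_x^{1/2}$, with $D_x=\int c_{x-1,x}(\sqrt f-\sqrt{f(\eta^{x-1,x})})^2 d\nur$ and $E_x=\int c_{x-1,x}^{-1}g_x^2(\sqrt f+\sqrt{f(\eta^{x-1,x})})^2 d\nur$. Since $|g_x|\le 1$, $c_{x-1,x}\ge 1/2$ for $N$ large, and Lemma \ref{lem:lem3.3} bounds $\int f(\eta^{x-1,x})\,d\nur$ uniformly, one has $E_x\le C$. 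Young's inequality with parameter $\gamma=N^2/4$ on each term yields
\begin{equation*}
N\sum_x |H(s,\tfrac xN)|D_x^{1/2}E_x^{1/2}
\le \tfrac{N^2}{4}\sum_x D_x + \sum_x H(s,\tfrac xN)^2 E_x
\le \tfrac{N^2}{4}\mcb D_N^B(\sqrt f,\nur)+CN\|H(s,\cdot)\|^2_{L^2}.
\end{equation*}
Choosing $\kappa>C$, both the Dirichlet form term and the $L^2$ norm term are absorbed, so $\lambda_N(s)/N\le C'$ uniformly in $H$, yielding the claimed bound $CT$. The main technical obstacle is keeping track of the $O(1/N)$ corrections arising from two independent sources, namely the approximation $\partial_u H(s,x/N)\approx \nabla_N^+ H(s,x/N)$ and the change-of-measure factor $\nur(\eta^{x-1,x})/\nur(\eta)$, since the multi-species rates $c_{x,x+1}(\eta)$ depend on the species pair and have both symmetric and antisymmetric components of order $\beta/N$; one must verify that the uniform lower bound $c_{x-1,x}\ge 1/2$ and the uniform $O(1/N)$ size of the symmetric correction hold across all species configurations before the Young--Cauchy--Schwarz scheme produces a clean constant independent of $H$.
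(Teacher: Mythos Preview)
Your proposal is correct and follows essentially the same route as the paper: Feynman--Kac plus the variational bound, Corollaries~\ref{cor:DirichletLB}--\ref{cor:DirichletLR} to replace $N^2\langle\mcb L_N\sqrt f,\sqrt f\rangle$ by $-\tfrac{N^2}{4}\mcb D_N^B+O(N)$, discrete summation by parts on the $H$-term, and then the symmetrisation $\eta\mapsto\eta^{x-1,x}$ combined with Young/Cauchy--Schwarz to absorb everything into the Dirichlet form and $\kappa N\|H\|_2^2$. The only cosmetic difference is that the paper keeps the two pieces $\tfrac12\int g_x[f-f(\eta^{x,x+1})]$ and $\tfrac12\int g_x[f+f(\eta^{x,x+1})]$ separate (the second producing the $O(1/N)$ change-of-measure correction), whereas you fold that correction in at the outset; one small point to make explicit is that the accumulated error $\sum_x|H(s,\tfrac xN)|\cdot O(1)$ is bounded via Young by $\tfrac12\sum_xH^2+O(N)$, so it is indeed absorbed by $\kappa N\|H\|_2^2$ with a constant independent of $H$.
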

\begin{proof}
 By Feynman--Kac formula the expression in the statement is bounded above by
 \begin{equation*}
  \int_0^T \sup_f\left\{ \frac{1}{N}\int \sum_{x=1}^{N-1}\partial_u H(s,\tfrac{x}{N})\xi^\alpha_x(\eta)  f(\eta)d\nur + N \langle \mcb{L}_N \sqrt f, \sqrt f \rangle_{\nur} -\kappa \|H(s,\cdot)\|^2_{L^2(0,1)}\right\} ds.
 \end{equation*}
By Corollaries~\ref{cor:DirichletLB} and~\ref{cor:DirichletLR} we can bound the last display by
\begin{equation*}
  \int_0^T \sup_f\left\{\frac{1}{N}\int\sum_{x=1}^{N-1}\partial_u H(s,\tfrac{x}{N})\xi^\alpha_x(\eta)  f(\eta)d\nur -\frac{N}{4}{\mcb D}^B_N(\sqrt f,\nur)+\frac{CN}{N^{\theta}}+C -\kappa \|H(s,\cdot)\|^2_{L^2(0,1)}\right\} ds.
 \end{equation*}
 Using the fact that
 \begin{equation*}
  \partial_u H(s,\tfrac{x}{N})=H(s,\tfrac{x+1}{N})-H(s,\tfrac{x}{N})+\Or(N^{-1}),
 \end{equation*}
after a summation by parts, we can write the integral inside the supremum as
\begin{equation*}
 \sum_{x=1}^{N-2}\int H(s,\tfrac{x}{N})(\xi^\alpha_x(\eta) -\xi^\alpha_{x+1}(\eta) ) f(\eta)d\nur+\Or(N^{-1}).
 \end{equation*}
 Now we split the integral as its half plus its half and  we add and subtract the same term but replacing $f(\eta)$ with $ f(\eta^{x,x+1}) $ to obtain the term
 \begin{align}
 &\sum_{x=1}^{N-2}\int H(s,\tfrac{x}{N})(\xi^\alpha_x(\eta) -\xi^\alpha_{x+1}(\eta) ) [f(\eta)-f(\eta^{x,x+1})]d\nur\label{eq:EE2}\\
 +&\sum_{x=1}^{N-2}\int H(s,\tfrac{x}{N})(\xi^\alpha_x(\eta) -\xi^\alpha_{x+1}(\eta) )[f(\eta)+f(\eta^{x,x+1})]d\nur.\label{eq:EE3}
 \end{align}
 We perform the change of variables $\eta\mapsto\eta^{x,x+1}$ in~\eqref{eq:EE3} to get
 \begin{align}
 &\sum_{x=1}^{N-2}\int H(s,\tfrac{x}{N})(\xi^\alpha_x(\eta) -\xi^\alpha_{x+1}(\eta) ) [f(\eta)-f(\eta^{x,x+1})]d\nur\label{eq:EE4}\\
 +&\sum_{x=1}^{N-2}\int H(s,\tfrac{x}{N})(\xi^\alpha_x(\eta) -\xi^\alpha_{x+1}(\eta) )f(\eta^{x,x+1})\left(1-\frac{\nur(\eta^{x,x+1})}{\nur(\eta)}\right)d\nur.\label{eq:EE5}
 \end{align}
 We multiply and divide by $c_{x,x+1}(\eta)$. Then, by Young's inequality, for $A>0$, we can bound \eqref{eq:EE4} from above by
 \begin{align}
&\frac{A}{2}\sum_{x=1}^{N-2}\int (H(s,\tfrac{x}{N}))^2\frac{1}{c_{x,x+1}(\eta)}[\sqrt{f(\eta)}+\sqrt{f(\eta^{x,x+1})}]^2 d\nur\label{eq:EE6}\\
 +&\frac{1}{2A}\sum_{x=1}^{N-2}\int c_{x,x+1}(\eta)[\sqrt{f(\eta)}-\sqrt{f(\eta^{x,x+1})}]d\nur.\label{eq:EE7}
 \end{align}
 Choosing $A=1/2N$, last display  is bounded from above by
 \begin{equation}\label{eq:EE8}
  \frac{C}{N}\sum_{x=1}^{N-2}(H(s,\tfrac{x}{N}))^2 + \frac{N}{4}{\mcb D}^B_N(\sqrt f,\nur).
 \end{equation}
 For~\eqref{eq:EE5}, we first recall that by~\eqref{eq:diff_measures}
 \begin{equation*}
  \Big|1-\frac{\nur(\eta^{x,x+1})}{\nur(\eta)}\Big|\leq\frac{C}{N}.
 \end{equation*}
Again by Young's inequality, for $B>0$, we can bound  \eqref{eq:EE5} from above by
\begin{equation}\label{eq:EE9}
\begin{aligned}
&\frac{B}{2}\sum_{x=1}^{N-2}\int (H(s,\tfrac{x}{N}))^2 d\nur+\frac{1}{2B}\sum_{x=1}^{N-2}\int f^2(\eta^{x,x+1})\left(1-\frac{\nur(\eta^{x,x+1})}{\nur(\eta)}\right)^2 d\nur\\
 \leq&\frac{C}{N}\sum_{x=1}^{N-2}(H(s,\tfrac{x}{N}))^2 + C,
 \end{aligned}
\end{equation}
if we choose $B=1/2N$. Putting together~\eqref{eq:EE8} and~\eqref{eq:EE9}, we bound everything by
\begin{equation*}
 \int_0^T \sup_f\left\{\frac{C}{N}\sum_{x=1}^{N-2}(H(s,\tfrac{x}{N}))^2 -\kappa\int_0^1 (H(s,u))^2 du+ C+\frac{CN}{N^\theta}+\frac{C}{N}\right\} ds.
\end{equation*}
And, we can conclude by observing that
\begin{equation*}
 \frac{1}{N}\sum_{x=1}^{N-2}(H(s,\tfrac{x}{N}))^2 \to \int_0^1 (H(s,u))^2 du.
\end{equation*}
\end{proof}

Now we show some consequences of the previous result. 
\begin{lem}\label{lem:boundary_terms}
If $\rho^\alpha\in L^2([0,T] \times\mcb H^1(0,1)),$ then 
\begin{equation}\label{eq:boundary_terms}
\lim_{\epsilon\to 0}\Big|\rho^\alpha_s(u)-\tfrac{1}{\epsilon}\int_{u}^{u+\epsilon} \rho^\alpha_s(v)dv\Big| =0\quad \textrm{and }\quad \lim_{\epsilon\to 0}\Big|\rho^\alpha_s(u)-\tfrac{1}{\epsilon}\int_{u-\epsilon}^{u} \rho^\alpha_s(v)dv\Big| =0
\end{equation}
for all $u\in[0,1]$  and for a.e. $s\in[0,T]$.
\end{lem}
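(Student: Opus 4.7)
The plan is to reduce the statement to a standard property of continuous functions. Since $\rho^\alpha \in L^2([0,T]\times \mcb H^1(0,1))$, Fubini gives that $\rho^\alpha_s \in \mcb H^1(0,1)$ for a.e. $s\in[0,T]$. As the paper already noted right after the definition of $\mcb H^1(0,1)$, each such $\rho^\alpha_s$ coincides a.e. with an absolutely continuous function on $(0,1)$; moreover, since $(0,1)$ has finite measure, Cauchy--Schwarz gives $\partial_u\rho^\alpha_s \in L^1(0,1)$, so the absolutely continuous representative admits a continuous extension to the closed interval $[0,1]$. I will work with this representative for the pointwise values $\rho^\alpha_s(u)$ appearing in \eqref{eq:boundary_terms}; in particular the boundary values $\rho^\alpha_s(0)$ and $\rho^\alpha_s(1)$ used in \eqref{eq:third_term_1} and in the Robin weak formulation become unambiguously defined.

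Once that identification is in place, the two limits are immediate from continuity: for every $u\in[0,1)$ and every sufficiently small $\epsilon>0$,
\begin{equation*}
\Big|\rho^\alpha_s(u) - \frac{1}{\epsilon}\int_u^{u+\epsilon}\rho^\alpha_s(v)\, dv\Big| \leq \sup_{v\in[u,u+\epsilon]}|\rho^\alpha_s(v) - \rho^\alpha_s(u)|,
\end{equation*}
which tends to $0$ as $\epsilon\to 0$ by continuity of $\rho^\alpha_s$, and a symmetric bound handles the left-average statement for $u\in(0,1]$.

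The point worth emphasizing is the contrast with the interior identity \eqref{eq:passage_rho}, which was obtained from Lebesgue differentiation and therefore holds only for \emph{a.e.} $u\in[0,1]$. The boundary analyses carried out in Sections \ref{sec:Dirichlet} and \ref{sec:RobinNeumann} genuinely require the convergence at the specific points $u=0$ and $u=1$, and this is exactly the improvement furnished by the one-dimensional Sobolev embedding $\mcb H^1(0,1)\hookrightarrow C([0,1])$. Consequently, the real work is not in this short lemma but in ensuring that the limiting density $\rho^\alpha$ lies in $L^2([0,T]\times\mcb H^1(0,1))$ in the first place; that hypothesis is exactly what the energy estimate established earlier in Section \ref{sec:energy} provides, via Riesz representation.
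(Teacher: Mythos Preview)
Your proof is correct and rests on the same underlying fact as the paper's: for a.e.\ $s$, $\rho^\alpha_s\in\mcb H^1(0,1)$ admits an absolutely continuous representative extending continuously to $[0,1]$, so the averages converge at every point, in particular at the endpoints. The paper's argument differs only in that it makes this quantitative: instead of invoking continuity abstractly, it writes $\rho^\alpha_s(u)-\rho^\alpha_s(v)=\int_u^v\partial_q\rho^\alpha_s(q)\,dq$ and applies Cauchy--Schwarz to obtain the explicit bound
\[
\Big|\rho^\alpha_s(u)-\tfrac{1}{\epsilon}\int_{u}^{u+\epsilon}\rho^\alpha_s(v)\,dv\Big|\leq \tfrac{2}{3}\|\partial_u\rho^\alpha_s\|_{L^2}\sqrt{\epsilon}.
\]
Your route is cleaner conceptually; the paper's buys a uniform (in $u$) rate $O(\sqrt{\epsilon})$ with a constant that is integrable in $s$ over $[0,T]$, which can be convenient if one later needs to interchange the $\epsilon\to0$ limit with the time integral.
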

 \begin{proof}
 We present the proof for the limit on the left-hand side of last display but the other one is analogous. From the Cauchy-Schwarz inequality and the fact that $\rho^\alpha\in  L^2([0,T] \times\mcb  H^1(0,1)),$ we get that 
 \begin{equation*}\begin{split}
\Big|\rho^\alpha_s(u)-\tfrac{1}{\epsilon}\int_{u}^{u+\epsilon} \rho^\alpha_s(v)dv\Big|&\leq \tfrac{1}{\epsilon}\int_{u}^{u+\epsilon}\Big |\rho^\alpha_s(u)-\rho^\alpha_s(v)\Big|dv\leq  \tfrac{1}{\epsilon}\int_{u}^{u+\epsilon}\int_u^v \Big |\partial_q\rho^\alpha_s(q)\Big|dq \,dv\\ &\leq  \tfrac{\|\partial_q\rho^\alpha_s\|_2}{\epsilon}\int_{u}^{u+\epsilon} \sqrt {v-u}  \,dv=\tfrac{2}{3}\|\partial_q\rho^\alpha_s\|_2 \sqrt \epsilon.
\end{split}
\end{equation*}
Taking the limit as $\epsilon\to 0$ we are done. 
 \end{proof}

 \subsection{Properties of the solution}\label{section_properties}

 Now we prove that the solution $\rho^\alpha_t$ satisfies item $\textbf{D2}$ of Definition \ref{def:Dirichlet}  if $\delta<1\leq \theta$. Let $\mathbb Q$  be a limit point of the sequence $\lbrace\mathbb Q_{N}\rbrace_{N \geq 1}$ whose existence is a consequence of  Proposition \ref{prop:tight}. In fact,  we can assume that the  whole sequence $\lbrace\mathbb Q_{N}\rbrace_{ N \ge 1}$ converges to $\mathbb Q$.  Since we are dealing with a process that has at most one particle per site, we can conclude easily (for details we refer the reader to \cite{kipnisLandim}) that $\mathbb Q$ is supported on trajectories of measures are absolutely continuous with respect to the Lebesgue measure, i.e. $\pi^\alpha_t (du)=\rho^\alpha_{t}(u)du$ for any $t \in [0,T]$. In last section we  prove that  the density $\rho^\alpha_t$ belongs to $L^2 (0,T; {\mcb H}^1)$ and we can identify the profile $\rho^\alpha_t$ with a continuous function in $[0,1]$. To show that the profile satisfies $\textbf{D2}$ of Definition \ref{def:Dirichlet},  we use \eqref{eq:ident} and then,  for any $\delta>0$, 
$${\mathbb Q_N}  \left[\Big|\int_0^t\langle \pi^\alpha_s, \ora{\iota_\epsilon}(0) \rangle -r_\alpha\, ds\Big| >\delta \right] \; \le \; \delta^{-1} \, \mathbb E_{\mu_N}\left[\Big|\int_0^t\ora{\xi}^{\alpha,\epsilon N}_1(\eta_{s})-r_\alpha\, ds\Big|\right].$$  Now we want to apply Portmanteau theorem but since $\ora{\iota_\epsilon}(0)$ is not a continuous function, the set $\Big\{ \pi \, ;\, \Big|\int_0^t( \langle \pi^\alpha_s, \ora{\iota_\epsilon}(0) \rangle -r_\alpha)\, ds\Big| >\delta \Big\}$ is not an  open set in the Skorohod topology. To overcome the problem, we use an  $L^1$-approximation of $\ora{\iota_\epsilon}$ by continuous functions, to conclude  that
$${\mathbb Q}  \left[\Big|\int_0^t \langle \pi^\alpha_s, \ora{\iota_\epsilon}(0) \rangle -r_\alpha\, ds\Big| >\delta \right] \; \le \; \delta^{-1} \, \liminf_{N \to \infty} \, \mathbb E_{\mu_N}\left[\Big|\int_0^t\ora{\xi}^{\alpha,\epsilon N}_1(\eta_{s})-r_\alpha\, ds\Big|\right].$$
If the right-hand side of the previous inequality is zero,  since  $\mathbb Q$ a.s. $\pi^\alpha_s (du) = \rho^\alpha_s (u) du$ with $\rho^\alpha_s$ a continuous function at $0$ for  a.e. $s\in[0,T]$, by taking the limit $\epsilon \to 0$, we conclude that $\mathbb Q$ a.s. $\rho^\alpha_s (0) =r_\alpha$ for a.e. $s \in [0,T]$.  From the previous observations   we are left to proving that for  $\delta<1$ and for any $t\in[0,T]$ it holds
\begin{equation*}
\begin{split}
&{\limsup_{\epsilon \to 0}}\,{\limsup _{N\rightarrow \infty}}\,\mathbb E_{\mu_N}\left[\Big|\int_0^t \ora{\xi}^{\alpha,\epsilon N}_1(\eta_{s})-r_\alpha \, ds\Big|\right] =0.
\end{split}
\end{equation*}
But this is an easy consequence of Lemmas \ref{lem:local_RL} and \ref{lem:RL_boundary}. 

 \appendix

\section{Uniqueness of weak solutions}\label{sec:uniqueness}

In this appendix we show the uniqueness of solutions to the boundary value problems \eqref{eq:Dirichlet} and \eqref{eq:Robin_4}. We start by presenting an equivalent definition of a weak solution in the Dirichlet case, according to which we will show uniqueness.

\subsection{Equivalence of two notions of weak solution for the Dirichlet boundary value problem}

Recall  \eqref{eq:Dirichlet}.
In Definition \ref{dirichletA1} below, we give another notion of weak solutions to   \eqref{eq:Dirichlet}, and we prove that it is equivalent to Definition \ref{def:Dirichlet}. The advantage, in the proof of uniqueness of solutions, is that the definition below has as input test functions  in  $C^2_0([0,1])$ i.e.  twice continuously differentiable functions $\phi$ that vanish at the boundary, $\phi(0)=\phi(1)=0$.

\begin{defin}\label{dirichletA1}We say that  $\rho=(\rho^A,\rho^B,\rho^\emptyset ):[0,T]\times[0,1] \to [0,1]^3$ is a solution of~\eqref{eq:Dirichlet}
	if for each  $\alpha\in\{A,B,\emptyset\}$  it holds:
	\begin{itemize}
		\item [$\mathbf{D1.\textcolor{white}{'}}$]$\rho^\alpha \in L^2([0,T]\times \mcb H^1(0,1))$, 
		\item  [$\mathbf{D2'.}$]  for all $t\in [0,T]$ and any $\phi \in C_0^{2} ([0,1])$,
		\begin{equation}\label{9.3}\begin{split}
				\langle \rho^\alpha_{t},  \phi\rangle  -\langle \mathfrak g^\alpha,   \phi \rangle
				&- \int_0^t\langle \rho^\alpha_{s},\Delta \phi\rangle -\beta\langle \rho^\alpha_s(\rho_s^{\alpha+1}-\rho_s^{\alpha+2}),\nabla\phi\rangle ds\\&-\int_{0}^tr_\alpha\nabla \phi(0)-\tilde r_\alpha\nabla \phi(1) ds=0\end{split}
		\end{equation}
	\end{itemize}
\end{defin}
We will show that these two definitions are equivalent.
Clearly $\mathbf{D2'} \implies \mathbf{D3}$. 
It is not difficult to see that $\mathbf{D2'} \implies \mathbf{D2}$: to show $\rho_t^\alpha(0)=r_\alpha$ we consider in \eqref{9.3} smooth approximation of the test functions
$\phi^{(n)}(u)=u\textbf{1}_{[0,1/n]}-\tfrac{1}{n-1}(u-1)\textbf{1}_{[1/n,1]}$. We use integration by parts in the term with the Laplacian and then take the limit $n\to\infty$ to conclude. Now we will show that $
\{\mathbf{D1},\mathbf{D2},\mathbf{D3}\}\implies\{\mathbf{D1},\mathbf{D2'}\}$. 
Assume that $\rho$ satisfies $\{\textbf{D1},\textbf{D2},\textbf{D3}\}$. Fix $\phi \in C_0^{2} ([0,1])$ and take a sequence $\{\phi^{(n)}\}_{n\geq1}$ in $C_c^{2} ([0,1])$ converging to $\phi$ in $\mcb H^1_0$. 
Then $\textbf{D3}$ holds for $\phi^{(n)}$ in place of $\phi$. To conclude $\mathbf{D2'}$ we observe that
\begin{align}
	\lim_{n\to\infty}\langle \rho^\alpha_{t},  \phi^{(n)}\rangle = \langle \rho^\alpha_{t},   \phi\rangle, &\quad
	\lim_{n\to\infty}\langle \mathfrak g^\alpha,   \phi^{(n)} \rangle= \langle \mathfrak g^\alpha,   \phi\rangle\label{equiv1},\\
	\lim_{n\to\infty}\int_0^t\langle \rho^\alpha_{s},\Delta  \phi^{(n)}\rangle ds = \int_0^t\langle \rho^\alpha_{s},\Delta  \phi\rangle ds &+\int_{0}^t r_\alpha\nabla \phi(0)-\tilde r_\alpha\nabla \phi(1) ds\label{equiv2},\\
	\lim_{n\to\infty}\int_0^t\langle \rho^\alpha_s(\rho_s^{\alpha+1}-\rho_s^{\alpha+2}),\nabla\phi^{(n)}\rangle ds&= \int_0^t\langle \rho^\alpha_s(\rho_s^{\alpha+1}-\rho_s^{\alpha+2}),\nabla\phi\rangle ds.\label{equiv4}
\end{align}

The limits in \eqref{equiv1} and \eqref{equiv4} follows from Cauchy-Schwarz inequality. To show \eqref{equiv2}, we integrate by parts twice and use $\textbf{D2}$ to obtain
\begin{equation*}
	\lim_{n\to\infty}\langle\rho^\alpha_s,\Delta\phi^{(n)}\rangle=-\lim_{n\to\infty}\langle \nabla\rho^\alpha_s,\nabla\phi^{(n)}\rangle=-\langle \nabla\rho^\alpha_s,\nabla\phi\rangle=\langle\rho^\alpha_s,\Delta\phi\rangle+r_\alpha\nabla\phi(0)-\tilde r_\alpha \nabla\phi(1).
\end{equation*}

This concludes the proof of the equivalence of the two notions of weak solution.

\subsection{Uniqueness for Dirichlet boundary condition}\label{sec:uni_dir}

In this section we prove uniqueness of the solutions of the boundary value problem \eqref{eq:Dirichlet} (Dirichlet) following the strategy presented in Appendix 2.4 of~\cite{kipnisLandim}.

Consider $\rho^1$ and $\rho^2$ two weak solutions  of~\eqref{eq:Dirichlet} according to Definition \ref{dirichletA1} and  with the same initial condition and call $\bar\rho=\rho^1-\rho^2$ their difference. We know that, for $\alpha\in\{A,B,\emptyset\}$, $\bar\rho^\alpha\in L^2([0,T];\mcb H^1_0)$, where $\mcb H^1_0$ is the set of functions in $\mcb H^1$ vanishing at $0$ and $1$. I n fact, since $\rho^1$ and $\rho^2$ have the same boundary values, for a.e. $t\in[0,T]$, we have $\bar\rho^\alpha_t(0)=\bar\rho^\alpha_t(1)=0$.  
We consider the set $\{\psi_k\}$ of eigenfunctions of the Laplacian with Dirichlet boundary conditions, given by $\psi_k(u)=\sqrt{2} \sin(k\pi u)$, $k\geq1$. These functions are in $C^2_0([0,1])$ and they form an orthonormal basis of $L^2([0,1])$. For $\alpha\in\{A,B,\emptyset\}$, let us define
\begin{equation*}
	V_\alpha(t)=\sum_{k\geq1}\frac{1}{2a_k}\langle\bar\rho^\alpha_t,\psi_k\rangle^2
\end{equation*}
with $a_k=(k\pi)^2+1$. Let $V(t)=\sum_{\alpha}V_\alpha(t)$. To conclude the uniqueness we will show that there exists a constant $C>0$ such that $V'(t)\leq CV(t)$. Then, from Gronwall's inequality it follows that $V(t)=0$ for all $t\geq0$. This implies that for any $\alpha\in\{A,B,\emptyset\}$ and any $t\in[0,T]$, $\bar\rho^\alpha_t(u)=0$ almost every  $u\in[0,1]$, concluding the proof of uniqueness.

We have that
\begin{equation*}
	V_{\alpha}'(t)=\sum_{k\geq1}\frac{1}{a_k}\langle\bar\rho^\alpha_t,\psi_k\rangle\frac{d}{dt}\langle\bar\rho^\alpha_t,\psi_k\rangle,
\end{equation*}
and that from~\eqref{9.3}, 
\begin{equation*}
	\begin{aligned}
		\frac{d}{dt}\langle\bar\rho^\alpha_t,\psi_k\rangle=&\langle\bar\rho^\alpha_t,\Delta\psi_k\rangle-\beta\langle\Gamma^\alpha(\rho^1_t,\rho^2_t),\nabla\psi_k\rangle\\
		=&-k^2\pi^2\langle\bar\rho^\alpha_t,\psi_k\rangle-\beta\langle\Gamma^\alpha(\rho^1_t,\rho^2_t),\nabla\psi_k\rangle,
	\end{aligned}
\end{equation*}
where
\begin{equation}\label{A2}
	\Gamma^\alpha(\rho^1,\rho^2)=\rho^{1,\alpha}(\rho^{1,\alpha+1}-\rho^{1,\alpha+2})-\rho^{2,\alpha}(\rho^{2,\alpha+1}-\rho^{2,\alpha+2}).
\end{equation}
Then,
\begin{equation*}
	V_{\alpha}'(t)=-\sum_{k\geq1}\frac{(k\pi)^2}{a_k}\langle\bar\rho^\alpha_t,\psi_k\rangle^2-\sum_{k\geq1}\frac{\beta}{a_k}\langle\bar\rho^\alpha_t,\psi_k\rangle \langle\Gamma^\alpha(\rho^1_t,\rho^2_t),\nabla\psi_k\rangle.
\end{equation*}
From Young's inequality applied to the last term,
\begin{equation}\label{A4}
	\begin{aligned}
		V_{\alpha}'(t)&\leq-\sum_{k\geq1}\frac{(k\pi)^2}{a_k}\langle\bar\rho^\alpha_t,\psi_k\rangle^2+\frac{1}{2A}\sum_{k\geq1}\frac{\beta}{a_k}\langle\bar\rho^\alpha_t,\psi_k\rangle^2 +\frac{A}{2}\sum_{k\geq1}\frac{\beta}{a_k} \langle\Gamma^\alpha(\rho^1_t,\rho^2_t),\nabla\psi_k\rangle^2\\
		&=-\sum_{k\geq1}\frac{(k\pi)^2}{a_k}\langle\bar\rho^\alpha_t,\psi_k\rangle^2+\frac{\beta}{2A}\sum_{k\geq1}\frac{1}{a_k}\langle\bar\rho^\alpha_t,\psi_k\rangle^2 +\frac{A\beta}{2}\sum_{k\geq1}\frac{(k\pi)^2}{a_k} \langle\Gamma^\alpha(\rho^1_t,\rho^2_t),\varphi_k\rangle^2\\
		&\leq-\sum_{k\geq1}\frac{(k\pi)^2}{a_k}\langle\bar\rho^\alpha_t,\psi_k\rangle^2+\frac{\beta}{2A}\sum_{k\geq1}\frac{1}{a_k}\langle\bar\rho^\alpha_t,\psi_k\rangle^2 +\frac{A\beta}{2}\sum_{k\geq1} \langle\Gamma^\alpha(\rho^1_t,\rho^2_t),\varphi_k\rangle^2,
	\end{aligned}
\end{equation}
where in the last two displays we used the facts that $\nabla\psi_k(u)=k\pi\varphi_k(u)$ with $\varphi_k(u)=\sqrt{2}\cos(k\pi u)$, $k\geq1$ and that $a_k=(k\pi)^2+1$.
Since $\{\varphi_k; k\geq1\}$ is an orthonormal set in $L^2([0,1])$, 
\begin{equation}\label{A8}
	\sum_{k\geq1} \langle\Gamma^\alpha(\rho^1_t,\rho^2_t),\varphi_k\rangle^2\leq \|\Gamma^\alpha(\rho^1_t,\rho^2_t)\|_2^2,
\end{equation}
We can obtain an estimate on $|\Gamma^\alpha(\rho^1,\rho^2)|$ in the following way. We sum and subtract appropriate terms in \eqref{A2} to obtain
\begin{equation*}
	\Gamma^\alpha(\rho^1,\rho^2)=(\rho^{1,\alpha+1}-\rho^{1,\alpha+2})(\rho^{1,\alpha}-\rho^{2,\alpha})+\rho^{2,\alpha}(\rho^{1,\alpha+1}-\rho^{2,\alpha+1})-\rho^{2,\alpha}(\rho^{1,\alpha+2}-\rho^{2,\alpha+2}),
\end{equation*}
thus $|\Gamma^\alpha(\rho^1_t,\rho^2_t)|\leq \sum_\gamma|\bar\rho^{\gamma}_t|$, and then from Cauchy-Schwarz  inequality it follows that
\begin{equation}\label{A9}
	\|\Gamma^\alpha(\rho^1_t,\rho^2_t)\|_2^2\leq 3 \sum_{\gamma}\|\bar\rho^\gamma_t\|_2^2.
\end{equation}	
Using the bounds \eqref{A8} and \eqref{A9} in the last term of \eqref{A4} and then summing for $\alpha\in\{A,B,\emptyset\}$ we get
\begin{equation*}
	\begin{aligned}
		V'(t)&\leq\sum_{\alpha}\left[\sum_{k\geq1}\left(-\frac{(k\pi)^2}{a_k}+\frac{\beta}{2Aa_k}\right)\langle\bar\rho^\alpha_t,\psi_k\rangle^2+\frac{9A\beta}{2}\|\bar\rho^\alpha_t\|_2^2\right]\\
		&=\sum_{\alpha}\sum_{k\geq1}\left(-\frac{(k\pi)^2}{a_k}+\frac{\beta}{2Aa_k}+\frac{9A\beta}{2}\right)\langle\bar\rho^\alpha_t,\psi_k\rangle^2.
	\end{aligned}
\end{equation*}
If we choose $A=(9\beta/2)^{-1}$,
\begin{equation*}
	\begin{aligned}
		V'(t)&\leq\sum_{\alpha}\sum_{k\geq1}\left(-\frac{(k\pi)^2}{a_k}+\frac{9\beta^2}{4a_k}+1\right)\langle\bar\rho^\alpha_t,\psi_k\rangle^2=\left(\frac{9\beta^2}{2}+2\right)V(t),
	\end{aligned}
\end{equation*}
and this concludes the proof.

\subsection{Uniqueness for Robin boundary condition}\label{sec:uni_rob}

In the section we discuss the uniqueness of the solutions of the boundary value problems~\eqref{eq:Robin_4} (Robin), case b)  in Theorem \ref{th:hyd_ssep}.

First we observe that following a strategy similar to the one presented for the Dirichlet case, we can prove uniqueness for the Robin case~\eqref{eq:Robin_4} when $\kappa_1=\kappa_2=0$ (case $b1)$). We can adapt the strategy presented above by choosing $\{\varphi_k\}$ as orthonormal basis of $L^2([0,1])$. The weak formulation of equation~\eqref{eq:weak_rob_4} gives
\begin{equation*}
	\begin{aligned}
		\frac{d}{dt}\langle\bar\rho^\alpha_t,\varphi_k\rangle=&\langle\bar\rho^\alpha_t,\Delta\varphi_k\rangle-\beta\langle\Gamma^\alpha(\rho^1_t,\rho^2_t),\nabla\varphi_k\rangle+\nabla\varphi_k(0)\bar\rho^\alpha_t(0)-\nabla\varphi_k(1)\bar\rho^\alpha_t(1).
	\end{aligned}
\end{equation*}
Since $\nabla\varphi_k(u)=-k\pi\psi_k(u)$, the proof is concluded after we observe that $\nabla\varphi_k(0)=\nabla\varphi_k(1)=0$, for any $k$.

Now we study the cases $\kappa_1=\tilde\beta/2$, $\kappa_2=1$ (case b2)), and $\kappa_1=0$, $\kappa_2=1$ (case b3)). For the function $\bar\rho=\rho^1-\rho^2$, for each $\alpha\in\{A,B,\emptyset\}$ by ~\eqref{eq:weak_rob_4}
\begin{equation}\label{A18}\begin{aligned}
		0\,=\,&\langle \bar\rho^\alpha_{t},  \phi_{t}\rangle 
		- \int_0^t\langle \bar\rho^\alpha_{s},(\Delta + \partial_s) \phi_{s}\rangle ds+\beta\int_0^t\langle \Gamma^\alpha(\rho^1,\rho^2),\nabla\phi_s\rangle ds\\&-\int_{0}^t\nabla \phi_s(0)\bar\rho^\alpha_s(0)-\nabla \phi_s(1)\bar\rho^\alpha_s(1)ds\\&-\kappa_1\int_0^t  \phi_s(0){\left[(2r_{\alpha+2}-1)\bar\rho^\alpha_s(0)-2r_\alpha\bar\rho^{\alpha+1}_s(0)\right] }ds\\
		&-\kappa_1\int_0^t  \phi_s(1){\left[(1-2\tilde r_{\alpha+2})\bar\rho^\alpha_{s}(1)+2\tilde r_\alpha\rho_s^{\alpha+1}(1)\right]} ds\\&+\kappa_2\int_0^t \left[\phi_s(0)\bar\rho^\alpha_s(0)+\phi_s(1)\bar\rho^{\alpha}_s(1)\right]ds.\end{aligned}
\end{equation}
Approximating the function $\bar\rho^\alpha$ by a sequence $\{\phi^{(n)}\}$ of functions in $C^{1,2}([0,T]\times[0,1])$, we will assume that \eqref{A18} is true for $\phi=\bar\rho^\alpha$. This approximation could be formalized as in \cite[Theorem B.7]{FGLN22}. Then, after integration by part in the term with $(\Delta+\partial_s)\phi_s$ we obtain
\begin{equation}\label{A19}\begin{split}
		\frac{1}{2}\|\bar\rho^\alpha_{t}\|_2^2=& 
		- \int_0^t \|\nabla\bar\rho^\alpha_s\|_2^2\, ds-\beta\int_0^t\langle \Gamma^\alpha(\rho^1,\rho^2),\nabla\bar\rho^\alpha_s\rangle ds\\&+\int_0^t[\kappa_1(2r_{\alpha+2}-1)-\kappa_2]\bar\rho^\alpha_s(0)^2ds+\int_0^t[\kappa_1(1-2\tilde r_{\alpha+2})-\kappa_2]\bar\rho^\alpha_s(1)^2ds\\&-\int_0^t2\kappa_1r_\alpha\bar\rho^\alpha_s(0)\bar\rho^{\alpha+1}_s(0)ds+\int_0^t 2\kappa_1\tilde r_\alpha \bar\rho^{\alpha}_s(1)\bar\rho^{\alpha+1}_s(1) ds.\end{split}
\end{equation}
Define $W(t)=\frac{1}{2}\sum_{\alpha}\|\bar\rho^\alpha_{t}\|_2^2$. We will show that, for some constant $C$, $W(t)\leq C \int_0^t W(s)ds$ and then by Gronwall's inequality we will conclude that $W(t)=0$, for $t\in[0,T]$, which yields the uniqueness. 

Using Young's inequality in the second term of the right hand side of \eqref{A19}, summing in $\alpha$ and using \eqref{A9} we obtain
\begin{equation}\label{A16}\begin{split}
		W(t)\,\,\leq&\,\, 9A\beta\int_0^tW(s)\,ds+ 
		\left(\frac{\beta}{2A}-1\right) \int_0^t \sum_\alpha\|\nabla\bar\rho^\alpha_s\|_2^2\, ds\\&+\sum_{\alpha}\int_0^t[\kappa_1(2r_{\alpha+2}-1)-\kappa_2]\bar\rho^\alpha_s(0)^2ds+\sum_{\alpha}\int_0^t[\kappa_1(1-2\tilde r_{\alpha+2})-\kappa_2]\bar\rho^\alpha_s(1)^2ds\\&-\sum_{\alpha}\int_0^t2\kappa_1r_\alpha\bar\rho^\alpha_s(0)\bar\rho^{\alpha+1}_s(0)ds+\sum_{\alpha}\int_0^t 2\kappa_1\tilde r_\alpha \bar\rho^{\alpha}_s(1)\bar\rho^{\alpha+1}_s(1) ds.\end{split}
\end{equation}
Note that, in the case $\kappa_1=0$ and $\kappa_2=1$ (case b3)) the terms in the second line are negative and the terms in the last line disappear. So, taking $A>\beta/2$ we get $W(t)\leq 9A\beta \int_0^tW(s)ds$. Now let us consider the case $\kappa_1=\tilde\beta/2$ and $k_2=1$ (case b2))). To conclude the proof it remains to show that the sum of last two lines in \eqref{A16} is negative. Remember that, under the conditions of case b2), $\theta=\delta$ and the process is well defined only if $\tilde\beta<2$. We will conclude the uniqueness under the condition $\tilde\beta<4/3$.
By Young's inequality, for $D>0$
\begin{equation*}
\begin{aligned}
-2\bar\rho^\alpha_s(0)\bar\rho^{\alpha+1}_s(0)&\leq D \bar\rho^\alpha_s(0)^2+\frac{1}{D}\bar\rho^{\alpha+1}_s(0)^2\\
2\bar\rho^\alpha_s(1)\bar\rho^{\alpha+1}_s(1)&\leq \frac{1}{D} \bar\rho^\alpha_s(1)^2+D\bar\rho^{\alpha+1}_s(1)^2.
\end{aligned}
\end{equation*}
Taking $D=2$, we get that the sum of the last two lines in \eqref{A16} is bounded above by
\begin{equation*}
\begin{split}
\sum_{\alpha}\int_0^t\left[\frac{\tilde\beta}{2}\left(2(r_\alpha+r_{\alpha+2})-1+\frac{r_{\alpha+2}}{2}\right)-1\right]\bar\rho^\alpha_s(0)^2ds+\sum_{\alpha}\int_0^t\left[\frac{\tilde\beta}{2}\left(1+\frac{\tilde r_\alpha}{2}\right)-1\right]\bar\rho^\alpha_s(1)^2ds.
\end{split}
\end{equation*}
Using the bounds $r_\alpha+r_{\alpha+2}\leq1$ and $\tilde r_\alpha\leq1$ we see that the terms in the brackets above are negative when $\tilde\beta<4/3$, and this concludes the proof in this case.

\quad

\thanks{ {\bf{Acknowledgements: }}
P.G. thanks  FCT/Portugal for financial support through the project 
UID/MAT/04459/2013.   This project has received funding from the European Research Council (ERC) under  the European Union's Horizon 2020 research and innovative programme (grant agreement   n. 715734).
The work of A. Occelli was supported in part by ERC-2019-ADG Project 884584 LDRam and by  ERC-2016-STG Project 715734, and was partially developed while A.O. was a postdoctoral fellow at MSRI during the Program "Universality and Integrability in Random Matrix Theory and Interacting Particle Systems". The authors are grateful to Gunter Sch\"utz, Milton Jara, Rodrigo Marinho, C\'edric Bernardin, Leonardo De Carlo and Dan Betea for various  discussions about the model.}

\end{document}